\documentclass[11pt]{article}
\usepackage{amsmath}
\usepackage{amssymb}
\usepackage{amsthm}
\usepackage[usenames]{color}
\usepackage{amscd}
\usepackage{indentfirst}

\usepackage[colorlinks=true,linkcolor=blue,filecolor=red,
citecolor=webgreen]{hyperref}
\definecolor{webgreen}{rgb}{0,.5,0}

\hoffset=-.7truein \voffset=-.6truein \textwidth=160mm
\textheight=220mm

\def\N{{\Bbb N}}
\def\Z{{\Bbb Z}}

\def\F{{\Bbb F}}

\def\1{{\bf 1}}

\newcommand{\DOT}{\text{\rm\Huge{.}}}

\def\id{\operatorname{id}}
\def\a{\boldsymbol{a}}

\newtheorem{theorem}{Theorem}
\newtheorem{lemma}[theorem]{Lemma}
\newtheorem{cor}[theorem]{Corollary}

\newtheorem{prop}[theorem]{Proposition}
\newtheorem{remark}[theorem]{Remark}

\begin{document}

\title{{\bf Counting Solutions of Quadratic Congruences in Several Variables Revisited}}
\author{L\'aszl\'o T\'oth}
\date{}
\maketitle

\centerline{Journal of Integer Sequences, Vol. 17 (2014), Article 14.11.6}

\begin{abstract} Let $N_k(n,r,\a)$ denote the number of incongruent
solutions of the quadratic congruence $a_1x_1^2+\ldots
+a_kx_k^2\equiv n$ (mod $r$), where $\a=(a_1,\ldots,a_k)\in \Z^k$,
$n\in \Z$, $r\in \N$. We give short direct proofs for certain less
known compact formulas on $N_k(n,r,\a)$, valid for $r$ odd, which go
back to the work of Minkowski, Bachmann and Cohen. We also deduce
some other related identities and asymptotic formulas which do not seem to appear in the
literature.
\end{abstract}

\medskip
{\sl 2010 Mathematics Subject Classification}:  11D79, 11A25, 11N37

{\it Key Words and Phrases}: quadratic congruence in many variables,
number of solutions, Jacobi symbol, Ramanujan sum, character sum,
Gauss quadratic sum, asymptotic formula

\section{Introduction}

Let $k$ and $n$ be positive integers and let $r_k(n)$ denote the number of
representations of $n$ as a sum of $k$ squares. More exactly,
$r_k(n)$ is the number of solutions $(x_1,\ldots,x_k)\in \Z^k$ of
the equation
\begin{equation} \label{r_k_def}
x_1^2+ \cdots +x_k^2 = n.
\end{equation}

The problem of finding exact formulas or good estimates for $r_k(n)$
and to study other related properties is one of the most fascinating
problems in number theory. Such results were obtained by several
authors, including Euler, Gauss, Liouville, Jacobi, Legendre and
many others. Some of these results are now well known and are
included in several textbooks. See, e.g., Grosswald \cite{Gro1985},
Hardy and Wright \cite[Ch.\ XX]{HarWri2008}, Hua \cite[Ch.\
8]{Hua1982}, Ireland and Rosen \cite[Ch.\ 17]{IreRos1990}, Nathanson
\cite[Ch.\ 14]{Nat2000}. See also Dickson \cite[Ch.\ VI--IX,
XI]{Dic1966}.

For example, one has the next exact formulas. Let $n$ be of
the form $n=2^{\nu} m$ with $\nu \ge 0$ and $m$ odd. Then
\begin{equation} \label{r_2}
r_2(n)= 4\sum_{d\mid m} (-1)^{(d-1)/2},
\end{equation}
\begin{equation} \label{r_4}
r_4(n)= 8(2+(-1)^n) \sum_{d\mid m} d.
\end{equation}

Exact formulas for $r_k(n)$ are known also for other values of $k$.
These identities are, in general, more complicated for $k$ odd
than in the case of $k$ even.

Now consider the equation \eqref{r_k_def} in the ring $\Z/r\Z$ of
residues (mod $r$), where $r$ is a positive integer. Equivalently, consider the
quadratic congruence
\begin{equation} \label{cong_def}
x_1^2+ \cdots + x_k^2 \equiv n \quad \text{ (mod $r$)},
\end{equation}
where $n\in \Z$. Let $N_k(n,r)$ denote the number of incongruent
solutions $(x_1,\ldots,x_k)\in \Z^k$ of \eqref{cong_def}. The
function $r\mapsto N_k(n,r)$ is multiplicative. Therefore, it is sufficient to
consider the case $r=p^s$, a prime power. Identities for $N_k(n,p^s)$ can be derived using
Gauss and Jacobi sums. For example, we refer to the explicit formulas for $N_k(0,p^s)$ given in
\cite[p.\ 46]{BEW1998} and for $N_k(1,p)$ given in \cite[Prop.\ 8.6.1]{IreRos1990}.
See also Dickson \cite[Ch.\ X]{Dic1966} for historical remarks.

Much less known is that for $k$ even and $r$ odd, $N_k(n,r)$ can be
expressed in a compact form using Ramanujan's sum. Furthermore, for
$k$ odd, $r$ odd and $\gcd(n,r)=1$, $N_k(n,r)$ can be given in terms of the
M\"obius $\mu$ function and the Jacobi symbol. All these formulas are
similar to \eqref{r_2} and \eqref{r_4}. Namely, one has the following identities:

0) For $k\equiv 0$ (mod $4$), $r$ odd, $n\in \Z$:
\begin{equation} \label{k_0}
N_k(n,r)= r^{k-1} \sum_{d\mid r} \frac{c_d(n)}{d^{k/2}}.
\end{equation}

1) For $k\equiv 1$ (mod $4$), $r$ odd, $n\in \Z$, $\gcd(n,r)=1$:
\begin{equation} \label{k_1}
N_k(n,r)= r^{k-1} \sum_{d\mid r} \frac{\mu^2(d)}{d^{(k-1)/2}} \left(
\frac{n}{d}\right).
\end{equation}

2) For $k\equiv 2$ (mod $4$), $r$ odd, $n\in \Z$:
\begin{equation} \label{k_2}
N_k(n,r)= r^{k-1} \sum_{d\mid r} (-1)^{(d-1)/2}\
\frac{c_d(n)}{d^{k/2}}.
\end{equation}

3) For $k\equiv 3$ (mod $4$), $r$ odd, $n\in \Z$, $\gcd(n,r)=1$:
\begin{equation} \label{k_3}
N_k(n,r)= r^{k-1} \sum_{d\mid r} (-1)^{(d-1)/2}\
\frac{\mu^2(d)}{d^{(k-1)/2}} \left( \frac{n}{d}\right).
\end{equation}

These are special cases of the identities deduced by Cohen in the
paper \cite{Coh1954Duke} and quoted later in his papers
\cite{Coh1963Nachr,Coh1964Collect,Coh1966AMM}. The proofs given in
\cite{Coh1954Duke} are lengthy and use the author's previous work,
although in Section 7 of \cite{Coh1954Duke} a direct approach using
finite Fourier sums is also described. According to Cohen the
formulas \eqref{k_0}--\eqref{k_3} are due in an implicit form by
Minkowski \cite[pp.\ 45--58, 166--171]{Min1911}. Cohen \cite[p.\ 27]{Coh1954Duke} says:
``We mention the work of Minkowski as an important example of the use of Fourier sums in
treating quadratic congruences. While Minkowski's approach was quite
general, his results were mainly of an implicit nature.'' Cohen \cite{Coh1966AMM}
refers also to the book of Bachmann \cite[Part 1, Ch.\ 7]{Bac1898}.

Another related compact formula, which seems to not appear in the literature is the following:
If $k\equiv 0$ (mod $4$), $r$ is odd and $n\in \Z$, then
\begin{equation} \label{k_0_mod 4_Jordan}
N_k(n,r)= r^{k/2-1} \sum_{d\mid \gcd(n,r)} d\, J_{k/2}(r/d),
\end{equation}
where $J_m$ is the Jordan function of order $m$.

It is the first main goal of the present paper to present short direct proofs
of the identities \eqref{k_0}--\eqref{k_0_mod 4_Jordan}. Slightly more
generally, we will consider --- as Cohen did --- the quadratic
congruence
\begin{equation} \label{cong_a}
a_1x_1^2+ \cdots +a_kx_k^2 \equiv n \quad \text{ (mod $r$)},
\end{equation}
where $n\in \Z$, $\a=(a_1,\ldots,a_k)\in \Z^k$ and derive formulas
for the number $N_k(n,r,\a)$ of incongruent solutions
$(x_1,\ldots,x_k)\in \Z^k$ of \eqref{cong_a}, assuming that $r$ is
odd. For the proofs we only need to express $N_k(n,r,\a)$ by a
trigonometric sum and to use the evaluation of the Gauss quadratic
sum. No properties concerning finite Fourier expansions or other
algebraic arguments are needed. The proof is quite simple if $k$ is
even and somewhat more involved if $k$ is odd.
We also evaluate $N_k(n,2^{\nu})$ ($\nu \in \N$) for certain values of $k$ and $n$
and consider some special cases of \eqref{cong_a}.

Our second main goal is to establish asymptotic
formulas --- not given in the literature, as far as we know --- for the sums $\sum_{r\le x} N_k(n,r)$, taken over all
integers $r$ with $1\le r\le x$, in the cases $(k,n)=(1,0), (1,1), (2,0), (2,1), (3,0), (3,1), (4,0), (4,1)$.
Similar formulas can be deduced also for other special choices of $k$ and $n$. Note that the mean
values of the functions $r\mapsto N_k(n,r)/n^{k-1}$ were investigated by Cohen \cite{Coh1963Nachr},
but only over the odd values of $r$.

We remark that a character free method to determine the number of solutions of
the equation $x^2+my^2=k$ in the finite field $\F_p$ ($p$ prime) was
presented in a recent paper by Girstmair \cite{Gir2013}.


\section{Notation} \label{sect_2}

Throughout the paper we use the following notation:
$\N:=\{1,2,\ldots\}$, $\N_0:=\{0,1,2,\ldots\}$; $e(x)=\exp(2\pi
ix)$; $\left( \frac{\ell}{r} \right)$ is the Jacobi symbol
($\ell,r\in \N$, $r$ odd), with the conventions $\left(
\frac{\ell}{1} \right)=1$ ($\ell \in \N$), $\left( \frac{\ell}{r}
\right)=0$ if $\gcd(\ell,r)>1$; $c_r(n)$ denotes Ramanujan's sum (see, e.g., \cite[Ch.\ 8]{Apo1976}, \cite[Ch.\ XVI]{HarWri2008})
defined as the sum of $n$-th powers of the primitive $r$-th roots of
unity, i.e.,
\begin{equation} \label{c_r(n)}
c_r(n)= \sum_{\substack{j=1\\ \gcd(j,r)=1}}^r e(jn/r) \quad (r,n\in \N),
\end{equation}
where $c_r(0)=\varphi(r)$ is Euler's function and $c_r(1)=\mu(r)$ is
the M\"obius function; $S(\ell,r)$ is the quadratic Gauss sum
defined by
\begin{equation} \label{Gauss_S}
S(\ell,r)= \sum_{j=1}^r e(\ell j^2/r) \quad (\ell,r \in \N, \gcd(\ell,r)=1).
\end{equation}

Furthermore, $*$ is the Dirichlet convolution of arithmetical functions; $\1$, $\id$ and $\id_k$ are the functions given by
$\1(n)=1$, $\id(n)=n$, $\id_k(n)=n^k$ ($n\in \N$); $\tau(n)$ is the number of divisors of $n$; $J_k=\mu * \id_k$
is the Jordan function of order $k$, $J_k(n)=n^k \prod_{p\mid n}(1-1/p^k)$
($n\in \N$), where $J_1=\varphi$. Also, $\psi_k=\mu^2 * \id_k$ is the generalized Dedekind function, $\psi_k(n)=n^k
\prod_{p\mid n}(1+1/p^k)$ ($n\in \N$); $\zeta$ is the Riemann zeta function; $\gamma$ stands for the Euler constant; $\chi=\chi_4$ is the
nonprincipal character (mod $4$) and
$G=L(2,\chi)\doteq 0.915956$ is the Catalan constant given by
\begin{equation} \label{Catalan_const}
G= \sum_{n=0}^{\infty} \frac{(-1)^n}{(2n+1)^2}= \prod_{p\equiv 1 \text{ (mod $4$)}} \left(1-\frac1{p^2} \right)^{-1} \prod_{p\equiv -1
\text{ (mod $4$)}} \left(1+\frac1{p^2} \right)^{-1}.
\end{equation}


\section{General results}

We evaluate $N_k(n,r,\a)$ using the quadratic Gauss sum $S(\ell,r)$ defined by \eqref{Gauss_S}.

\begin{prop} \label{Prop_3_1} For every $k,r\in \N$, $n\in \Z$, $\a=(a_1,\ldots,a_k)\in \Z^k$ we have
\begin{equation*}
N_k(n,r,\a)= r^{k-1} \sum_{d\mid r} \frac1{d^k} \sum_{\substack{\ell=1\\
(\ell,d)=1}}^d e(-\ell n/d) S(\ell a_1,d) \cdots S(\ell a_k,d).
\end{equation*}
\end{prop}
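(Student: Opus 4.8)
The plan is to detect the congruence condition $a_1x_1^2+\cdots+a_kx_k^2\equiv n\pmod r$ by the standard additive-character indicator
\[
\frac1r\sum_{\ell=1}^r e\!\left(\ell\,(a_1x_1^2+\cdots+a_kx_k^2-n)/r\right),
\]
which equals $1$ when the argument is $\equiv 0\pmod r$ and $0$ otherwise. Summing this over all $x_1,\ldots,x_k$ running through a complete residue system mod $r$ gives
\[
N_k(n,r,\a)=\frac1r\sum_{\ell=1}^r e(-\ell n/r)\prod_{i=1}^k\sum_{x_i=1}^r e(\ell a_i x_i^2/r).
\]
Thus the inner sum over each $x_i$ is a Gauss-type sum with modulus $r$, but the coefficient $\ell a_i$ need not be coprime to $r$, so it is not literally $S(\ell a_i,r)$ as defined in \eqref{Gauss_S}; this mismatch is the one genuine point requiring care.

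The key step is therefore to reduce $\sum_{x=1}^r e(c x^2/r)$ for arbitrary $c$ (here $c=\ell a_i$) to the "primitive" Gauss sums $S(\cdot,\cdot)$. I would split according to $g=\gcd(c,r)$: writing $c=g c'$, $r=g r'$ with $\gcd(c',r')=1$, one checks that $\sum_{x=1}^r e(cx^2/r)=\sum_{x=1}^{gr'} e(c'x^2/r')=g\sum_{x=1}^{r'} e(c'x^2/r')=g\,S(c',r')$, using that $x\mapsto x^2 \bmod r'$ is $g$-to-one as $x$ ranges over $0,\ldots,gr'-1$ relative to its values mod $r'$. However, organizing the whole sum this way for a product of $k$ such factors, with possibly different gcd's $\gcd(\ell a_i,r)$, is messy. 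A cleaner route — and the one I would actually carry out — is to reorganize the outer sum over $\ell\in\{1,\ldots,r\}$ by the divisor $d=r/\gcd(\ell,r)$: each $\ell$ with $\gcd(\ell,r)=r/d$ is of the form $\ell=(r/d)\ell'$ with $\ell'$ running over residues mod $d$ coprime to $d$. Substituting, $e(-\ell n/r)=e(-\ell' n/d)$ and $\sum_{x=1}^r e(\ell a_i x^2/r)=\sum_{x=1}^r e(\ell' a_i x^2/d)=(r/d)\sum_{x=1}^d e(\ell' a_i x^2/d)$, the last equality because the summand has period $d$ in $x$ and $d\mid r$. The inner sum $\sum_{x=1}^d e(\ell' a_i x^2/d)$ is again not quite $S(\ell' a_i,d)$ unless $\gcd(\ell' a_i,d)=1$, but since $\gcd(\ell',d)=1$ it differs from $S(\ell' a_i,d)$ only through $\gcd(a_i,d)$; in the stated proposition the sums $S(\ell a_i,d)$ appear exactly with these (possibly non-coprime) arguments, so I should simply adopt the convention — or note — that $S(\ell,r)$ for $\gcd(\ell,r)>1$ denotes the full sum $\sum_{j=1}^r e(\ell j^2/r)$; under that reading no further reduction is needed.

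Putting the pieces together: after the substitution $\ell=(r/d)\ell'$, each of the $k$ character sums contributes a factor $r/d$, giving $(r/d)^k$ in total, and combined with the overall $1/r$ this yields
\[
N_k(n,r,\a)=\frac1r\sum_{d\mid r}\Bigl(\frac rd\Bigr)^k\sum_{\substack{\ell'=1\\(\ell',d)=1}}^{d} e(-\ell' n/d)\,S(\ell' a_1,d)\cdots S(\ell' a_k,d)
= r^{k-1}\sum_{d\mid r}\frac1{d^k}\sum_{\substack{\ell=1\\(\ell,d)=1}}^{d} e(-\ell n/d)\,S(\ell a_1,d)\cdots S(\ell a_k,d),
\]
which is exactly the claimed identity after renaming $\ell'$ to $\ell$. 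The main obstacle, as indicated, is purely bookkeeping: correctly handling the non-coprimality of $\ell a_i$ with the modulus, i.e. being precise about what $S(\ell a_i,d)$ means, and verifying the periodicity-and-scaling identity $\sum_{x=1}^r e(cx^2/d)=(r/d)\sum_{x=1}^d e(cx^2/d)$ for $d\mid r$. Neither is deep, so the proposition should follow in a few lines once the indexing by $d=r/\gcd(\ell,r)$ is set up.
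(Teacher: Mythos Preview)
Your proposal is correct and follows essentially the same route as the paper: express $N_k(n,r,\a)$ via the additive-character orthogonality relation, then split the outer sum according to $\gcd(\ell,r)$ and use the periodicity of $e(cx^2/d)$ in $x$ to pull out the factor $(r/d)$ from each inner sum. The paper writes the grouping as $(j,r)=d$, $j=d\ell$ and swaps $d\leftrightarrow r/d$ at the end, whereas you set $d=r/\gcd(\ell,r)$ from the start, but this is purely cosmetic; your observation that $S(\ell a_i,d)$ must be read as the full sum $\sum_{j=1}^d e(\ell a_i j^2/d)$ even when $\gcd(a_i,d)>1$ is a fair notational point that the paper leaves implicit.
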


\begin{proof} As is well known (see e.g., \cite[Th.\ 1.31]{Nar1983}), the number of solutions of a congruence can be expressed
using the familiar identity
\begin{equation} \label{id_fam}
\sum_{j=1}^r e(jt/r)= \begin{cases} r, & \text{ if \ $r\mid t$;} \\  0, & \text{ if \ $r\nmid
t$.} \end{cases}
\end{equation}
valid for every $r\in \N, t\in \Z$. In our case we obtain
\begin{equation*}
N_k(n,r,\a)=\frac1{r} \sum_{x_1=1}^r \cdots \sum_{x_k=1}^r \sum_{j=1}^r e((a_1x_1^2+\cdots + a_kx_k^2-n)j/r),
\end{equation*}
that is,
\begin{equation} \label{N}
N_k(n,r,\a)=\frac1{r} \sum_{j=1}^r e(-jn/r) \sum_{x_1=1}^r
e(ja_1x_1^2/r) \cdots \sum_{x_k=1}^r e(ja_kx_k^2/r).
\end{equation}

By grouping the terms of \eqref{N} according to the values $(j,r)=d$
with $j=d\ell$, $(\ell,r/d)=1$, we obtain
\begin{equation} \label{N_1}
N_k(n,r,\a)=\frac1{r} \sum_{d\mid r} \sum_{\substack{\ell=1\\
(\ell,r/d)=1}}^{r/d} e(-\ell n/(r/d)) \sum_{x_1=1}^r e(\ell a_1
x_1^2/(r/d)) \cdots \sum_{x_k=1}^r e(\ell a_k x_k^2/(r/d)),
\end{equation}
where, as it is easy to see, for every $j\in \{1,\ldots,k\}$,
\begin{equation} \label{N_2}
\sum_{x_j=1}^r e(\ell a_j x_j^2/(r/d)) = d S(\ell a_j,r/d).
\end{equation}

By inserting \eqref{N_2} into \eqref{N_1} and by putting $d$ instead of $r/d$, we
are ready.
\end{proof}

\begin{prop} \label{Prop_3_2} Assume that $k,r\in \N$, $r$ is odd, $n\in \Z$ and
$\a=(a_1,\ldots, a_k)\in \Z^k$ is such that $\gcd(a_1 \cdots a_k,r)=1$. Then
\begin{equation} \label{eq_r_odd}
N_k(n,r,\a)= r^{k-1} \sum_{d\mid r} \frac{i^{k(d-1)^2/4}}{d^{k/2}} \left(
\frac{a_1\cdots a_k}{d} \right) \sum_{\substack{\ell=1\\
(\ell,d)=1}}^d \left(\frac{\ell}{d}\right)^k e(-\ell n/d).
\end{equation}
\end{prop}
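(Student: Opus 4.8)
The plan is to start from Proposition~\ref{Prop_3_1} and simply evaluate the inner Gauss sums $S(\ell a_j,d)$ under the hypotheses $r$ odd and $\gcd(a_1\cdots a_k,r)=1$. Since $d\mid r$, each such $d$ is odd, and since $\gcd(a_j,d)=1$ we have $\gcd(\ell a_j,d)=1$ whenever $\gcd(\ell,d)=1$, so the classical evaluation of the quadratic Gauss sum applies. Recall that for $d$ odd and $\gcd(m,d)=1$ one has $S(m,d)=\left(\frac{m}{d}\right)\varepsilon_d\sqrt{d}$, where $\varepsilon_d=1$ if $d\equiv 1\pmod 4$ and $\varepsilon_d=i$ if $d\equiv 3\pmod 4$; equivalently $\varepsilon_d = i^{(d-1)^2/4}$, which is exactly the exponent appearing in the target formula. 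Using multiplicativity of the Jacobi symbol, $S(\ell a_j,d)=\left(\frac{\ell}{d}\right)\left(\frac{a_j}{d}\right)\varepsilon_d\sqrt d$.

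The next step is purely bookkeeping: substitute this into the formula of Proposition~\ref{Prop_3_1}. The product $S(\ell a_1,d)\cdots S(\ell a_k,d)$ becomes $\left(\frac{\ell}{d}\right)^k\left(\frac{a_1\cdots a_k}{d}\right)(\varepsilon_d\sqrt d)^k = \left(\frac{\ell}{d}\right)^k\left(\frac{a_1\cdots a_k}{d}\right)\varepsilon_d^{\,k} d^{k/2}$. Then $\varepsilon_d^{\,k} = i^{k(d-1)^2/4}$, and the factor $d^{k/2}$ combines with the $1/d^k$ in front to give $1/d^{k/2}$. Pulling the pieces that do not depend on $\ell$ outside the inner sum yields precisely
\[
N_k(n,r,\a)= r^{k-1}\sum_{d\mid r}\frac{i^{k(d-1)^2/4}}{d^{k/2}}\left(\frac{a_1\cdots a_k}{d}\right)\sum_{\substack{\ell=1\\(\ell,d)=1}}^d\left(\frac{\ell}{d}\right)^k e(-\ell n/d),
\]
which is \eqref{eq_r_odd}. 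A small point worth a sentence in the write-up: for $d=1$ the Jacobi-symbol conventions fixed in Section~\ref{sect_2} (namely $\left(\frac{\ell}{1}\right)=1$) make the $d=1$ term equal $e(0)=1$ times $r^{k-1}$-scaled constants, consistent with the convention $S(\ell,1)=1$, so no separate treatment is needed.

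I do not expect a genuine obstacle here; the only thing requiring care is citing the correct normalization of the Gauss sum for odd modulus and matching the sign factor $\varepsilon_d$ with the compact expression $i^{(d-1)^2/4}$ (checking the two residue classes $d\equiv 1,3\pmod 4$ by hand). Everything else is substitution and regrouping. The real work of the paper is deferred to the subsequent specializations, where the inner character sum $\sum_{(\ell,d)=1}\left(\frac{\ell}{d}\right)^k e(-\ell n/d)$ is identified — for $k$ even it is a Ramanujan sum $c_d(n)$, and for $k$ odd it is a Gauss-type sum that reduces, via $\gcd(n,r)=1$, to $\mu^2(d)\left(\frac{n}{d}\right)$ up to known factors — but that analysis belongs to the proofs of \eqref{k_0}--\eqref{k_3}, not to Proposition~\ref{Prop_3_2}.
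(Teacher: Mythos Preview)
Your proposal is correct and follows exactly the paper's approach: cite the classical evaluation \eqref{S_eval} of the Gauss sum $S(m,d)$ for odd modulus, note that $\varepsilon_d=i^{(d-1)^2/4}$, and substitute into Proposition~\ref{Prop_3_1}. The paper compresses all of this into the single sentence ``the result follows immediately from Proposition~\ref{Prop_3_1},'' whereas you have written out the bookkeeping (multiplicativity of the Jacobi symbol, the power $\varepsilon_d^{\,k}$, the cancellation $d^{k/2}/d^k$) explicitly; nothing further is needed.
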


\begin{proof} We use that for every $r$ odd and $\ell \in \N$ such that $\gcd(\ell,r)=1$,
\begin{equation} \label{S_eval}
S(\ell,r)= \begin{cases} \left( \frac{\ell}{r} \right) \sqrt{r}, \ \ & \text{ if \ $r \equiv 1$ (mod $4$);} \\
i\, \left( \frac{\ell}{r} \right) \sqrt{r}, \ \ & \text{ if \ $r \equiv -1$ (mod $4$),} \end{cases}
\end{equation}
cf., e.g., \cite[Th.\ 1.5.2]{BEW1998}, \cite[Th.\ 7.5.6]{Hua1982}.
Now the result follows immediately from Proposition \ref{Prop_3_1}.
\end{proof}

\begin{prop} \label{Prop_3_3} Assume that $k\in \N$, $r=2^{\nu}$ ($\nu \in \N$), $n\in \Z$ and
$\a=(a_1,\ldots, a_k)\in \Z^k$ is such that $a_1, \ldots, a_k$ are odd. Then
\begin{equation*}
N_k(n,2^{\nu},\a)= 2^{\nu(k-1)} \left( 1 + \sum_{t=1}^{\lfloor \nu/2 \rfloor} \frac1{2^{kt}} \sum_{\substack{\ell=1\\
\ell \, \text{\rm odd}}}^{2^{2t}} e(-\ell n/2^{2t})(1+i^{\ell a_1})\cdots (1+i^{\ell a_k}) \right.
\end{equation*}
\begin{equation*}
\left. + \sum_{t=1}^{\lfloor (\nu-1)/2 \rfloor} \frac1{2^{kt}} \sum_{\substack{\ell=1\\
\ell \, \text{\rm odd}}}^{2^{2t+1}} e(-\ell n/2^{2t+1} +\ell(a_1+\cdots +a_k)/8) \right).
\end{equation*}
\end{prop}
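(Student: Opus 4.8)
The plan is to specialize Proposition~\ref{Prop_3_1} to $r=2^{\nu}$ and to substitute the evaluation of the quadratic Gauss sum $S(\ell',2^{m})$ for $\ell'$ odd. The divisors of $2^{\nu}$ are exactly the powers $d=2^{m}$ with $0\le m\le \nu$, and for such $d$ the condition $\gcd(\ell,d)=1$ simply means that $\ell$ is odd; so Proposition~\ref{Prop_3_1} becomes
\begin{equation*}
N_k(n,2^{\nu},\a)= 2^{\nu(k-1)} \sum_{m=0}^{\nu} \frac1{2^{mk}} \sum_{\substack{\ell=1\\ \ell\,\text{odd}}}^{2^{m}} e(-\ell n/2^{m})\, S(\ell a_1,2^{m})\cdots S(\ell a_k,2^{m}).
\end{equation*}
The term $m=0$ contributes the summand $1$, since $S(\cdot,1)=1$, and since each $\ell a_j$ is odd and $S(\ell',2)=e(\ell'/2)+e(2\ell')=0$ for $\ell'$ odd, the term $m=1$ vanishes identically. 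Thus only the terms with $m\ge 2$ survive, and I would split these according to the parity of $m$.

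The key input is the classical evaluation of the quadratic Gauss sum modulo a power of $2$: for $\ell'$ odd and $t\ge 1$,
\begin{equation*}
S(\ell',2^{2t})=2^{t}(1+i^{\ell'}), \qquad S(\ell',2^{2t+1})=2^{t+1}e(\ell'/8).
\end{equation*}
This may be quoted from the literature on Gauss sums (e.g., \cite{BEW1998}, \cite{Hua1982}), or obtained by a short induction on $m$: since $(u+2^{m-1})^{2}\equiv u^{2}\pmod{2^{m}}$ for $m\ge 2$, one has $S(\ell',2^{m})=2\sum_{u=0}^{2^{m-1}-1}e(\ell'u^{2}/2^{m})$, and splitting this sum according to the parity of $u$ yields $S(\ell',2^{m})=2S(\ell',2^{m-2})$ plus a correction term that vanishes for $m\ge 4$. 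Together with $S(\ell',1)=1$ and $S(\ell',2)=0$ this gives the base cases $S(\ell',4)=2(1+i^{\ell'})$ and $S(\ell',8)=4e(\ell'/8)$, after which the recursion $S(\ell',2^{m})=2S(\ell',2^{m-2})$ (valid for $m\ge 4$) yields the general formula by induction.

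Finally I substitute. For $m=2t$ with $1\le t\le \lfloor \nu/2\rfloor$ one has $S(\ell a_1,2^{2t})\cdots S(\ell a_k,2^{2t})=2^{tk}\prod_{j=1}^{k}(1+i^{\ell a_j})$, so after dividing by $2^{2tk}$ the factor $1/2^{tk}$ remains, which is precisely the first inner sum of the statement. For $m=2t+1$ with $1\le t\le \lfloor (\nu-1)/2\rfloor$ one has $S(\ell a_1,2^{2t+1})\cdots S(\ell a_k,2^{2t+1})=2^{(t+1)k}e(\ell(a_1+\cdots+a_k)/8)$, and since $2^{(t+1)k}/2^{(2t+1)k}=1/2^{tk}$, combining with $e(-\ell n/2^{2t+1})$ produces the second inner sum. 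These four cases $m=0$, $m=1$, even $m\ge 2$ and odd $m\ge 3$ account for all $m\in\{0,1,\ldots,\nu\}$, and collecting their contributions gives the asserted identity. I expect the only genuinely nontrivial point to be the evaluation of $S(\ell',2^{m})$ in the odd-exponent case --- in particular checking that the correction sums vanish once $m\ge 4$ and that the surviving phase is the $m$-independent quantity $e(\ell'/8)$; everything else is bookkeeping of powers of $2$ and of the ranges of $t$.
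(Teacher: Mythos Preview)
Your proof is correct and follows essentially the same route as the paper: specialize Proposition~\ref{Prop_3_1} to $r=2^{\nu}$, quote the standard evaluation of $S(\ell',2^{m})$ for odd $\ell'$ (the paper cites \cite[Th.\ 1.5.1, 1.5.3]{BEW1998} and \cite[Th.\ 7.5.7]{Hua1982}), and then separate the terms $m=0$, $m=1$, $m=2t$, $m=2t+1$. Your inductive sketch of the Gauss-sum formula is a small extra that the paper omits, but otherwise the arguments coincide.
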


\begin{proof} Using Proposition \ref{Prop_3_1} and putting $d=2^s$,
\begin{equation*}
N_k(n,2^{\nu},\a)= 2^{\nu(k-1)} \sum_{s=0}^{\nu} \frac1{2^{ks}} \sum_{\substack{\ell=1\\
\ell \, \text{\rm odd}}}^{2^s} e(-\ell n/2^s) S(\ell a_1,2^s) \cdots S(\ell a_k,2^s).
\end{equation*}

We apply that for every $\ell$ odd,
\begin{equation*}
S(\ell,2^{\nu})= \begin{cases}
0, & \text{ if \ $\nu=1$;} \\
(1+i^{\ell})2^{\nu/2}, & \text{ if \ $\nu$ is even;} \\
2^{(\nu+1)/2}e(\ell/8), & \text{ if \ $\nu>1$ is odd,}
\end{cases}
\end{equation*}
cf., e.g., \cite[Th.\ 1.5.1, 1.5.3]{BEW1998}, \cite[Th.\
7.5.7]{Hua1982}. Separating the terms corresponding to $s=2t$ even
and $s=2t+1$ odd, respectively we obtain the given formula.
\end{proof}


\section{The case \texorpdfstring{$k$}{k} even, \texorpdfstring{$r$}{r} odd}

Suppose that $k$ is even and $r$ is odd. In this case we deduce for
$N_k(n,r,\a)$ formulas in terms of the Ramanujan sums.

\begin{prop} \label{prop_4_1} {\rm (\cite[Th.\ 11 and Eq.\ (5.2)]{Coh1954Duke})} Assume that $k=2m$ ($m\in \N$),
$r\in \N$ is odd, $n\in \Z$, $\a=(a_1,\ldots,a_k) \in
\Z^k$, $\gcd(a_1\cdots a_k,r)=1$. Then
\begin{equation*}
N_{2m}(n,r,\a)= r^{2m-1} \sum_{d\mid r} \frac{c_d(n)}{d^m} \left(
\frac{(-1)^m a_1\cdots a_{2m}}{d} \right).
\end{equation*}
\end{prop}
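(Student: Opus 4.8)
The plan is to start from Proposition~\ref{Prop_3_2} with $k=2m$ and simplify the resulting expression. Substituting $k=2m$ into \eqref{eq_r_odd}, the factor $i^{k(d-1)^2/4} = i^{m(d-1)^2/2}$ appears. Since $r$ is odd, every divisor $d$ of $r$ is odd, so $(d-1)/2$ is an integer and $(d-1)^2/2 = 2\cdot((d-1)/2)^2$; hence $i^{m(d-1)^2/2} = (i^2)^{m(d-1)^2/4} = (-1)^{m(d-1)/2\cdot(d-1)}$. More simply, $i^{m(d-1)^2/2} = ((-1)^{(d-1)/2})^{m(d-1)}$, and since $d-1$ is even this equals $(-1)^{m(d-1)/2}$ after noting the parity, which one checks equals $\left(\frac{(-1)^m}{d}\right)$ by the standard evaluation of the Jacobi symbol $\left(\frac{-1}{d}\right) = (-1)^{(d-1)/2}$. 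So the leading factor becomes $\left(\frac{(-1)^m}{d}\right)$, and together with $\left(\frac{a_1\cdots a_k}{d}\right)$ it combines into $\left(\frac{(-1)^m a_1\cdots a_{2m}}{d}\right)$, exactly as in the target.

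Next I would handle the inner sum $\sum_{\ell=1,\,(\ell,d)=1}^{d} \left(\frac{\ell}{d}\right)^k e(-\ell n/d)$. With $k=2m$ even, $\left(\frac{\ell}{d}\right)^{2m} = \left(\frac{\ell}{d}\right)^2$, which equals $1$ whenever $\gcd(\ell,d)=1$ and $0$ otherwise. Therefore the inner sum collapses to $\sum_{\ell=1,\,(\ell,d)=1}^{d} e(-\ell n/d)$, which is precisely Ramanujan's sum $c_d(-n) = c_d(n)$ (Ramanujan sums are even, $c_d(-n)=c_d(n)$, directly from the definition \eqref{c_r(n)} by pairing $\ell$ with $d-\ell$). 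Combining everything and replacing $d^{k/2}=d^m$ in the denominator yields
\[
N_{2m}(n,r,\a) = r^{2m-1}\sum_{d\mid r} \frac{1}{d^m}\left(\frac{(-1)^m a_1\cdots a_{2m}}{d}\right) c_d(n),
\]
which is the claimed identity.

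The only genuinely delicate point is getting the sign factor $i^{m(d-1)^2/2}$ identified correctly with the Jacobi symbol $\left(\frac{(-1)^m}{d}\right)$; everything else is a routine collapse of a character sum. I would verify this by checking $d\equiv 1\pmod 4$ (factor is $1$, and $\left(\frac{-1}{d}\right)=1$ so $\left(\frac{(-1)^m}{d}\right)=1$ — consistent, since $(d-1)^2/2$ is then a multiple of $4$) and $d\equiv 3\pmod 4$ (then $(d-1)/2$ is odd, $(d-1)^2/2 = 2(d-1)\cdot(d-1)/4$; careful bookkeeping gives $i^{m(d-1)^2/2} = (-1)^m = \left(\frac{-1}{d}\right)^m = \left(\frac{(-1)^m}{d}\right)$). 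Once this bookkeeping is pinned down, the proof is immediate from Proposition~\ref{Prop_3_2}.
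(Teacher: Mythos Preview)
Your proof is correct and follows exactly the same route as the paper: apply Proposition~\ref{Prop_3_2}, note that for $k$ even the inner character sum reduces to $c_d(n)$, and identify the power of $i$ with $\left(\frac{-1}{d}\right)^m$ via $\left(\frac{-1}{d}\right)=(-1)^{(d-1)/2}$.

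One cosmetic remark: your intermediate expression $((-1)^{(d-1)/2})^{m(d-1)}$ is identically $1$ (since $d-1$ is even), so that line does not actually carry the computation. The clean way is $i^{m(d-1)^2/2}=(-1)^{m((d-1)/2)^2}=(-1)^{m(d-1)/2}$, using $a^2\equiv a\pmod 2$. Your case-check at the end recovers the right answer regardless, so the argument stands.
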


\begin{proof} This is a direct consequence of Proposition \ref{Prop_3_2}. For $k$ even the inner sum of \eqref{eq_r_odd} is
exactly $c_d(n)$, by its definition \eqref{c_r(n)}, and applying that $\left(\frac{-1}{d} \right)=(-1)^{(d-1)/2}$ the proof is complete.
\end{proof}

In the special case $k=2$, $a_1=1$, $a_2=-D$, $r$ odd, $\gcd(D,r)=\gcd(n,r)=1$ Proposition \ref{prop_4_1} was deduced by Rabin
and Shallit \cite[Lemma 3.2]{RabSha1986}.

\begin{cor} \label{cor_4_2} If $k=4m$ ($m\in \N$), $r\in \N$ is odd, $n\in \Z$ and
$a_1\cdots a_k=1$ (in particular $a_1=\cdots =a_k=1$), then
\begin{equation*}
N_{4m}(n,r,\a)= r^{4m-1} \sum_{d\mid r} \frac{c_d(n)}{d^{2m}}.
\end{equation*}
\end{cor}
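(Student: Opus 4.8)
The plan is to derive Corollary \ref{cor_4_2} directly from Proposition \ref{prop_4_1} by specializing the parameters. Setting $k=4m$ means $k=2m'$ with $m'=2m$ even, so in the statement of Proposition \ref{prop_4_1} the exponent ``$m$'' there equals $2m$ here. The relevant quantity controlling the Jacobi symbol is $(-1)^{m'}a_1\cdots a_k$ with $m'=2m$, hence $(-1)^{2m}=1$, and together with the hypothesis $a_1\cdots a_k=1$ this makes the argument of the Jacobi symbol equal to $1$.

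First I would invoke Proposition \ref{prop_4_1} with $k=4m$, writing it in the form
\begin{equation*}
N_{4m}(n,r,\a)= r^{4m-1} \sum_{d\mid r} \frac{c_d(n)}{d^{2m}} \left( \frac{(-1)^{2m} a_1\cdots a_{4m}}{d} \right).
\end{equation*}
Then I would simplify the Jacobi symbol: since $2m$ is even, $(-1)^{2m}=1$, and by hypothesis $a_1\cdots a_{4m}=1$, so $\left( \frac{(-1)^{2m}a_1\cdots a_{4m}}{d}\right)=\left(\frac{1}{d}\right)=1$ for every $d\mid r$ (using the convention $\left(\frac{1}{d}\right)=1$ recorded in Section \ref{sect_2}). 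Substituting this back gives the claimed identity. The parenthetical remark that $a_1=\cdots=a_k=1$ is a special case is immediate, and one may also note it covers the original congruence \eqref{cong_def} for $k\equiv 0\pmod 4$, recovering \eqref{k_0} for those $k$.

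There is essentially no obstacle here: the only point requiring the slightest care is matching the index conventions between the two statements (the ``$m$'' of Proposition \ref{prop_4_1} versus the ``$m$'' of the corollary) and checking that the parity of $2m$ kills the sign $(-1)^{2m}$. No use of the Gauss sum evaluation or any property of Ramanujan sums beyond what Proposition \ref{prop_4_1} already supplies is needed, so the proof is a two-line specialization.
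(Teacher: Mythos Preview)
Your proposal is correct and is exactly the specialization the paper intends: Corollary \ref{cor_4_2} is stated without a separate proof precisely because it follows from Proposition \ref{prop_4_1} by setting $k=4m$ and observing that $(-1)^{2m}a_1\cdots a_{4m}=1$ makes the Jacobi symbol trivial. (Note also that $a_1\cdots a_k=1$ in $\Z$ forces each $a_i=\pm 1$, so the coprimality hypothesis $\gcd(a_1\cdots a_k,r)=1$ of Proposition \ref{prop_4_1} is automatically met.)
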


\begin{cor} If $k=4m+2$ ($m\in \N_0$), $r\in \N$ is odd, $n\in \Z$ and
$a_1\cdots a_k=1$ (in particular $a_1=\cdots = a_k=1$), then
\begin{equation} \label{eq_4m+2}
N_{4m+2}(n,r,\a)= r^{4m+1} \sum_{d\mid r} (-1)^{(d-1)/2}
\frac{c_d(n)}{d^{2m+1}}.
\end{equation}
\end{cor}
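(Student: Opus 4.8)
The plan is to obtain this as an immediate specialization of Proposition~\ref{prop_4_1}. I would write $k = 4m+2 = 2m'$ with $m' = 2m+1 \in \N$, so that Proposition~\ref{prop_4_1} applies: the hypothesis $\gcd(a_1\cdots a_k,r)=1$ holds trivially since $a_1\cdots a_k = 1$. This yields
\begin{equation*}
N_{4m+2}(n,r,\a) = r^{4m+1} \sum_{d\mid r} \frac{c_d(n)}{d^{2m+1}} \left( \frac{(-1)^{2m+1} a_1\cdots a_{4m+2}}{d} \right).
\end{equation*}

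Next I would simplify the Jacobi symbol appearing here. Using $a_1\cdots a_{4m+2} = 1$ and $(-1)^{2m+1} = -1$, the numerator of the symbol collapses to $-1$, so it becomes $\left( \frac{-1}{d} \right)$. It then remains to invoke the standard evaluation $\left( \frac{-1}{d} \right) = (-1)^{(d-1)/2}$ for $d$ odd --- the same fact already used in the proof of Proposition~\ref{prop_4_1} --- which converts the displayed identity into exactly \eqref{eq_4m+2}.

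Every step is a direct substitution, so there is no genuine obstacle; the only point requiring care is the parity bookkeeping, namely that $k/2 = 2m+1$ is \emph{odd} when $k \equiv 2 \pmod 4$. This is precisely what produces the extra sign factor $(-1)^{(d-1)/2}$ that distinguishes \eqref{eq_4m+2} from the $k \equiv 0 \pmod 4$ formula in Corollary~\ref{cor_4_2}, where $k/2$ is even and the Jacobi symbol $\left( \frac{(-1)^{k/2}}{d}\right) = \left(\frac1d\right) = 1$ disappears.
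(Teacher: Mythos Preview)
Your proof is correct and matches the paper's approach exactly: the paper states this result as an immediate corollary of Proposition~\ref{prop_4_1} without giving a separate proof, and your substitution $k=2m'$ with $m'=2m+1$ together with $\left(\frac{-1}{d}\right)=(-1)^{(d-1)/2}$ is precisely the intended specialization.
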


Therefore, the identities \eqref{k_0} and \eqref{k_2} are proved. In particular, the next simple formulas are valid: for every $r$ odd,
\begin{equation} \label{N_2(0,r)_r_odd}
N_2(0,r)= r \sum_{d\mid r} (-1)^{(d-1)/2}\ \frac{\varphi(d)}{d},
\end{equation}
\begin{equation} \label{form_spec_Jordan}
N_4(1,r)= r^3 \sum_{d\mid r} \frac{\mu(d)}{d^2}= r J_2(r).
\end{equation}

\begin{remark} {\rm In the case $k$ even and $a_1=\cdots =a_k=1$ for the proof of Proposition \ref{prop_4_1}
it is sufficient to use the formula $S^2(\ell,r)=(-1)^{(r-1)/2}r$
($r$ odd, $\gcd(\ell,r)=1$) instead of the much deeper result
\eqref{S_eval} giving the precise value of $S(\ell,r)$.}
\end{remark}

In the case $k=4m$ and $a_1\cdots a_k=1$ the next representation holds as well (already given in \eqref{k_0_mod 4_Jordan} in the case
$a_1=\cdots = a_k=1$).

\begin{cor} \label{cor_4_5} If $k=4m$ ($m\in \N$), $r\in \N$ is odd, $n\in \Z$ and
$a_1\cdots a_k=1$ (in particular $a_1=\cdots = a_k=1$), then
\begin{equation} \label{4m_even}
N_{4m}(n,r,\a)= r^{2m-1} \sum_{d\mid \gcd(n,r)} d\, J_{2m}(r/d).
\end{equation}
\end{cor}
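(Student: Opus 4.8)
The plan is to deduce the identity directly from Corollary \ref{cor_4_2}, which under exactly these hypotheses already gives
\[
N_{4m}(n,r,\a)= r^{4m-1} \sum_{d\mid r} \frac{c_d(n)}{d^{2m}} = r^{2m-1}\sum_{d\mid r} c_d(n)\,(r/d)^{2m},
\]
the second equality coming from $1/d^{2m}=(r/d)^{2m}/r^{2m}$. Substituting $e=r/d$, which merely permutes the divisors of $r$, the right-hand side becomes $r^{2m-1}\,(\id_{2m}*g)(r)$, where $g$ is the arithmetical function $g(d)=c_d(n)$ and $*$ is Dirichlet convolution. So the whole statement reduces to identifying $(\id_{2m}*g)(r)$ with $\sum_{d\mid\gcd(n,r)} d\,J_{2m}(r/d)$.

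For this I would invoke the classical evaluation of Ramanujan's sum, $c_d(n)=\sum_{e\mid\gcd(n,d)} e\,\mu(d/e)$, which can be read as the statement $g=f_n*\mu$, where $f_n$ is defined by $f_n(d)=d$ if $d\mid n$ and $f_n(d)=0$ otherwise (with the usual convention $\gcd(0,d)=d$, so $f_n=\id$ when $n=0$). Combining this with $J_{2m}=\mu*\id_{2m}$ and the associativity and commutativity of $*$ gives
\[
\id_{2m}*g=\id_{2m}*(f_n*\mu)=(\id_{2m}*\mu)*f_n=J_{2m}*f_n,
\]
whence $(\id_{2m}*g)(r)=\sum_{d\mid r} f_n(d)\,J_{2m}(r/d)=\sum_{d\mid\gcd(n,r)} d\,J_{2m}(r/d)$. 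Multiplying by $r^{2m-1}$ yields the asserted formula \eqref{4m_even}.

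Since $c_d(n)$ depends only on $n$ modulo $d$ and $\gcd(n,r)$ only on $n$ modulo $r$, there is no loss in assuming $n\in\N_0$; the boundary case $n=0$ is handled by the convention above (the right-hand side then collapses to $r^{2m-1}(\id*J_{2m})(r)=r^{2m-1}(\varphi*\id_{2m})(r)$, matching the left-hand side since $c_d(0)=\varphi(d)$). The only genuine choice is how to verify the convolution step: the route above is essentially obstacle-free, but alternatively one may note that both sides of \eqref{4m_even} are multiplicative functions of $r$ — the left side because $r\mapsto N_k(n,r,\a)$ is multiplicative, the right side because $J_{2m}$ and $\gcd(n,\cdot)$ are — and then check the identity at prime powers $r=p^a$ by a short direct computation using the explicit values of $c_{p^j}(n)$. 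I expect the whole argument to be routine once Corollary \ref{cor_4_2} is in hand; the one point that takes a moment is recognizing the divisor sum as the Dirichlet convolution $J_{2m}*f_n$.
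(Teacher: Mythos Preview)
Your proof is correct and essentially identical to the paper's: both start from Corollary~\ref{cor_4_2}, rewrite the sum as $r^{2m-1}\,(\id_{2m}*c_{\DOT}(n))(r)$, invoke the convolution identity $c_{\DOT}(n)=\mu*\eta_{\DOT}(n)$ (your $f_n$ is the paper's $\eta_{\DOT}(n)$), and regroup using $J_{2m}=\mu*\id_{2m}$ and associativity of~$*$. The extra remarks on the $n=0$ boundary case and the alternative multiplicativity check are reasonable but not needed.
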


\begin{proof} We use Corollary \ref{cor_4_2} and apply that for every fixed $n$, $c_{\DOT}(n)=\mu * \eta_{\DOT}(n)$, where
$\eta_r(n)=r$ if $r\mid n$ and $0$ otherwise. Therefore,
\begin{align*}
N_{4m}(n,r,\a)
& = r^{2m-1} \sum_{d\mid r} c_d(n) (r/d)^{2m} \\
& = r^{2m-1} \left( c_{\DOT}(n)* \id_{2m}\right)(r) \\
& =  r^{2m-1} \left(\mu* \id_{2m} * \eta_{\DOT}(n) \right)(r) \\
& = r^{2m-1} \left(J_{2m} * \eta_{\DOT}(n) \right)(r) \\
& = r^{2m-1} \sum_{d\mid \gcd(n,r)} d\, J_{2m}(r/d).
\end{align*}
\end{proof}

\begin{remark} {\rm The identity \eqref{4m_even} shows that for every $r$ odd, the function $n\mapsto N_{4m}(n,r)$ is even (mod $r$).
We recall that an arithmetic function $n\mapsto f(n)$ is said to be even (mod $r$) if $f(n)=f(\gcd(n,r))$ holds for every $n\in \N$.
We refer to \cite{TotHau2011} for this notion. In fact, for every $k$ even the function $n\mapsto N_k(n,r,\a)$ is even (mod $r$), where $r$ is
a fixed odd number and $\gcd(a_1\cdots a_k,r)=1$, since according to Proposition \ref{prop_4_1}, $N_k(n,r,\a)$ is a linear combination of the
values $c_d(r)$ with
$d\mid r$. See also \cite{Coh1964Collect}.}
\end{remark}

A direct consequence of \eqref{4m_even} is the next result:

\begin{cor} If $k=4m$ ($m\in \N$), $r\in \N$ is odd, $n\in \Z$ such that $\gcd(n,r)=1$, then
\begin{equation*}
N_{4m}(n,r)= r^{2m-1} J_{2m}(r)= r^{4m-1} \prod_{p\mid r} \left(1-\frac1{p^{2m}}\right).
\end{equation*}
\end{cor}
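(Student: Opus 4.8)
The plan is to read this off as the special case $\gcd(n,r)=1$ of Corollary \ref{cor_4_5}. First I would invoke the identity \eqref{4m_even}, which is already available and states that for $k=4m$, $r$ odd, $a_1\cdots a_k=1$ (which holds here since we are in the case of \eqref{cong_def}, i.e.\ all $a_j=1$),
\begin{equation*}
N_{4m}(n,r)= r^{2m-1} \sum_{d\mid \gcd(n,r)} d\, J_{2m}(r/d).
\end{equation*}
Now I would use the hypothesis $\gcd(n,r)=1$: the only divisor of $\gcd(n,r)=1$ is $d=1$, so the sum on the right collapses to the single term $1\cdot J_{2m}(r/1)=J_{2m}(r)$. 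This yields $N_{4m}(n,r)= r^{2m-1} J_{2m}(r)$.

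For the second equality I would simply substitute the product formula for the Jordan function recalled in Section \ref{sect_2}, namely $J_{2m}(r)= r^{2m} \prod_{p\mid r}\left(1-1/p^{2m}\right)$, into the expression just obtained, giving
\begin{equation*}
N_{4m}(n,r)= r^{2m-1}\cdot r^{2m} \prod_{p\mid r}\left(1-\frac1{p^{2m}}\right)= r^{4m-1} \prod_{p\mid r}\left(1-\frac1{p^{2m}}\right).
\end{equation*}
There is essentially no obstacle here: the statement is a one-line specialization of an identity already proved, and the only thing to be careful about is noting explicitly that $\gcd(n,r)=1$ forces the divisor sum in \eqref{4m_even} to degenerate to its $d=1$ term. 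Alternatively one could bypass \eqref{4m_even} and argue directly from Corollary \ref{cor_4_2}, using $c_d(n)=\mu(d)$ whenever $\gcd(n,d)=1$ (valid for all $d\mid r$ since $\gcd(n,r)=1$), so that $\sum_{d\mid r} c_d(n)/d^{2m}=\sum_{d\mid r}\mu(d)/d^{2m}=\prod_{p\mid r}(1-1/p^{2m})$ and hence $N_{4m}(n,r)=r^{4m-1}\prod_{p\mid r}(1-1/p^{2m})$; but routing it through \eqref{4m_even} is the shortest path and I would present that one.
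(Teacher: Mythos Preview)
Your proposal is correct and matches the paper's approach exactly: the paper simply states that this corollary is a direct consequence of \eqref{4m_even}, and your specialization to $\gcd(n,r)=1$ (collapsing the divisor sum to its $d=1$ term) together with the product formula for $J_{2m}$ is precisely what is intended.
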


Another consequence of Proposition \ref{prop_4_1} is the following identity, of which proof is similar to the proof of Corollary \ref{cor_4_5}:

\begin{cor} \label{cor_4_8} If $k=4m+2$ ($m\in \N_0$), $r\in \N$ is odd, $n\in \Z$ and
$a_1\cdots a_k=-1$, then
\begin{equation*}
N_{4m+2}(n,r,\a)= r^{2m} \sum_{d\mid \gcd(n,r)} d\, J_{2m+1}(r/d).
\end{equation*}
\end{cor}

In the case $r=p^\nu$ ($p>2$ prime) and for special choices of $k$
and $n$ one can deduce explicit formulas from the identities of
above. For example (as is well known):

\begin{cor} For every prime $p>2$ and every $n\in \N$,
\begin{equation*}
N_2(n,p) = \begin{cases} 2p-1, & \text{ if \ $p\mid n$, $p\equiv 1$ {\rm
(mod $4$);}} \\ 1, & \text{ if \ $p\mid n$, $p\equiv -1 $ {\rm (mod $4$);}} \\
p-1, & \text{ if \ $p\nmid n$,  $p\equiv 1$ {\rm (mod $4$);}} \\ p+1, &
\text{ if \ $p\nmid n$, $p\equiv -1$ {\rm (mod $4$),}}
\end{cases}
\end{equation*}
\begin{equation*}
N_2(n,p^2) = \begin{cases} p(p-1), & \text{ if \ $p\nmid n$, $p\equiv 1$ {\rm (mod $4$);}} \\
2p(p-1), & \text{ if \ $p\mid n$, $p^2\nmid n$, $p\equiv 1$ {\rm (mod $4$);}} \\
3p^2-2p, & \text{ if \ $p^2\mid n$, $p\equiv 1$, {\rm (mod $4$);}} \\
p(p+1), & \text{ if \ $p\nmid n$, $p\equiv -1$ {\rm (mod $4$);}} \\
0, & \text{ if \ $p\mid n$, $p^2\nmid n$,  $p\equiv -1$ {\rm (mod $4$);}} \\
p^2, & \text{ if \ $p^2\mid n$, $p\equiv -1$ {\rm (mod $4$).}}
\end{cases}
\end{equation*}
\end{cor}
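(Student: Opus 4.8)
The plan is to specialize formula \eqref{eq_4m+2} to $k=2$ (the case $m=0$, with $a_1=a_2=1$), which gives
\[
N_2(n,r)= r\sum_{d\mid r}(-1)^{(d-1)/2}\,\frac{c_d(n)}{d}
\]
for every odd $r$, and then to take $r=p$ and $r=p^2$ in turn.

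For $r=p$ the divisors are $1$ and $p$, so the sum collapses to
\[
N_2(n,p)= p + (-1)^{(p-1)/2}c_p(n).
\]
Here I would insert the elementary values $c_p(n)=\varphi(p)=p-1$ when $p\mid n$ and $c_p(n)=\mu(p)=-1$ when $p\nmid n$, and then split according to whether $p\equiv 1$ or $p\equiv -1\pmod 4$; this yields the four cases in the statement.

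For $r=p^2$ the divisors are $1,p,p^2$. Since $8\mid p^2-1$ for every odd $p$, the sign $(-1)^{(p^2-1)/2}$ equals $+1$, and the formula reduces to
\[
N_2(n,p^2)= p^2 + (-1)^{(p-1)/2}p\,c_p(n) + c_{p^2}(n).
\]
Now I would recall the prime-power values of Ramanujan's sum (see, e.g., \cite[Ch.\ 8]{Apo1976}): $c_{p^2}(n)=p^2-p$ if $p^2\mid n$, $c_{p^2}(n)=-p$ if $p\mid n$ but $p^2\nmid n$, and $c_{p^2}(n)=0$ if $p\nmid n$; combined with the values of $c_p(n)$ above this gives three regimes for $n$, each of which splits into two subcases according to $p\bmod 4$, producing the six listed formulas after a one-line simplification in each.

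The argument involves no genuine difficulty; it is entirely a matter of bookkeeping. The only steps that call for a little care are noticing that the coefficient $(-1)^{(d-1)/2}$ equals $+1$ at $d=p^2$ (because $p^2\equiv 1\pmod 8$), and quoting the correct values of $c_{p^k}(n)$ at prime-power moduli; everything else is arithmetic simplification within each of the ten cases.
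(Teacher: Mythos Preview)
Your proposal is correct and is exactly the approach the paper intends: the corollary is stated without proof, merely introduced as something one ``can deduce ... from the identities of above,'' and specializing \eqref{eq_4m+2} with $m=0$ to $r=p$ and $r=p^2$ together with the standard prime-power values of $c_d(n)$ is precisely that deduction. Your observation that $(-1)^{(p^2-1)/2}=1$ for odd $p$ is the only nontrivial point, and you handle it correctly.
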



\section{The case \texorpdfstring{$k$}{k} odd, \texorpdfstring{$r$}{r} odd}

Now consider the case $k$ odd, $r$ odd. In order to apply Proposition \ref{Prop_3_2} we need to evaluate
the character sum
\begin{equation*}
T(n,r)= \sum_{\substack{j=1\\\gcd(j,r)=1}}^r \left(\frac{j}{r}\right) e(jn/r).
\end{equation*}

\begin{lemma} \label{lemma_5_1} Let $r,n\in \N$, $r$ odd such that $\gcd(n,r)=1$.

i) If $r$ is squarefree, then
\begin{equation} \label{def_T}
T(n,r) = \begin{cases} \left( \frac{n}{r} \right) \sqrt{r}, & \text{ if \ $r \equiv 1$ {\rm (mod $4$);}} \\
i \left( \frac{n}{r} \right) \sqrt{r}, & \text{ if \ $r \equiv -1$ {\rm (mod $4$).}} \end{cases}
\end{equation}

ii) If $r$ is not squarefree, then $T(n,r)=0$.
\end{lemma}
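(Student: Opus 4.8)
The plan is to exploit multiplicativity of $T(n,\cdot)$ in a suitable sense, reducing to prime powers, and then handle the prime and prime-power cases separately via the Gauss sum evaluation \eqref{S_eval}. First I would observe that $T(n,r)$ is, up to the factor $\left(\frac{n}{r}\right)$ when $\gcd(n,r)=1$, essentially a Gauss sum associated to the Jacobi-symbol character. More precisely, since $\left(\frac{\cdot}{r}\right)$ is a (possibly imprimitive) real character mod $r$, one can write $T(n,r)=\sum_{j=1}^r \left(\frac{j}{r}\right)e(jn/r)$, dropping the coprimality condition because the Jacobi symbol already vanishes on non-units. The key structural fact is that for $r=r_1r_2$ with $\gcd(r_1,r_2)=1$, the Chinese Remainder Theorem gives $\left(\frac{j}{r}\right)=\left(\frac{j}{r_1}\right)\left(\frac{j}{r_2}\right)$ and $e(jn/r)$ factors accordingly after a change of variables $j\leftrightarrow(j\bmod r_1, j\bmod r_2)$, so that $T(n,r_1r_2)$ splits as a product of twisted sums over $r_1$ and $r_2$. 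This reduces everything to the case $r=p^s$ with $p$ an odd prime.

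Next I would treat $r=p^s$. When $s=1$, the sum $T(n,p)=\sum_{j=1}^{p}\left(\frac{j}{p}\right)e(jn/p)$ is the classical Gauss sum for the Legendre symbol character; the standard evaluation gives $T(n,p)=\left(\frac{n}{p}\right)T(1,p)$ with $T(1,p)=S(1,p)$ (since $\sum_j \left(\frac{j}{p}\right)e(j/p)=\sum_j e(j^2/p)$ as each nonzero square is hit twice and the symbol counts the sign), and then \eqref{S_eval} with $r=p$ yields $\sqrt p$ or $i\sqrt p$ according to $p\bmod 4$. When $s\ge 2$, I would show $T(n,p^s)=0$ for $\gcd(n,p^s)=1$ as follows: write $\left(\frac{j}{p^s}\right)=\left(\frac{j}{p}\right)^s$, which equals $\left(\frac{j}{p}\right)$ if $s$ is odd and the principal character mod $p$ (value $1$ on units, $0$ otherwise) if $s$ is even. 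For $s$ even the sum becomes $\sum_{\gcd(j,p)=1}e(jn/p^s)$, a Ramanujan-type sum $c_{p^s}(n)$, which is $0$ whenever $p^{s-1}\nmid n$; since $s\ge 2$ and $\gcd(n,p^s)=1$ this holds. For $s$ odd and $\ge 3$, I would break the sum over $j\in\{1,\dots,p^s\}$ into residue classes modulo $p^{s-1}$, i.e. $j=b+p^{s-1}c$ with $1\le b\le p^{s-1}$, $0\le c<p$; since $\left(\frac{j}{p}\right)$ depends only on $j\bmod p$, hence only on $b\bmod p$, the inner sum over $c$ is $\sum_{c=0}^{p-1}e(cn/p)=0$ because $p\nmid n$. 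Assembling these prime-power cases through the multiplicativity established above: if $r$ is squarefree, every prime contributes a factor $\left(\frac{n}{p}\right)\sqrt p$ or $i\left(\frac{n}{p}\right)\sqrt p$, and collecting the powers of $i$ recovers the quadratic reciprocity-type sign so that the product is $\left(\frac{n}{r}\right)\sqrt r$ times $1$ or $i$ according to $r\bmod 4$ — this is exactly the content of \eqref{S_eval} applied with $r$ in place of the prime, and indeed the cleanest route is to note $T(n,r)=S(1,r)\left(\frac{n}{r}\right)$ for $r$ squarefree and invoke \eqref{S_eval} directly. If $r$ is not squarefree, some prime enters to an exponent $\ge 2$, that factor vanishes by the above, and so $T(n,r)=0$.

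The main obstacle is the bookkeeping of the powers of $i$ in the squarefree case: when $r=p_1\cdots p_t$ and we multiply the individual contributions, the resulting power of $i$ is $i^{\#\{j:p_j\equiv -1\}}$, which must be reconciled with the single factor $1$ or $i$ dictated by $r\bmod 4$. This is precisely where one must either invoke quadratic reciprocity (to see that $\prod_j \varepsilon_{p_j}=\varepsilon_r$ in the appropriate sense, $\varepsilon_m$ being $1$ or $i$ for $m\equiv 1$ or $-1\bmod 4$) or, more efficiently, sidestep the issue entirely by recognizing $T(n,r)$ for squarefree $r$ as the Jacobi-symbol Gauss sum $\sum_j\left(\frac{j}{r}\right)e(jn/r)=\left(\frac{n}{r}\right)S(1,r)$ and quoting \eqref{S_eval} — which already packages the sign correctly — rather than reproving it prime by prime. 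I would take the latter route, so that the only real work is the vanishing statement for non-squarefree $r$, which is the elementary inner-sum argument sketched above.
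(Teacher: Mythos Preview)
Your argument is correct, and for part~(i) you ultimately land on the same step the paper takes: reduce to $T(n,r)=\left(\frac{n}{r}\right)T(1,r)$ and then quote the evaluation of the Gauss sum attached to the (primitive, since $r$ is squarefree) Jacobi character. One small point: your phrasing ``$T(n,r)=\left(\frac{n}{r}\right)S(1,r)$ and invoke \eqref{S_eval}'' needs a word of justification, because \eqref{S_eval} evaluates $S(1,r)=\sum_j e(j^2/r)$, not the character sum $T(1,r)=\sum_j\left(\frac{j}{r}\right)e(j/r)$; these agree for $r$ prime by the usual one-line count, but for general squarefree $r$ their equality is exactly the content of the primitive real-character Gauss sum evaluation (Hua, Th.~7.5.8), which is what the paper cites directly. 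Either cite that result, or note that $T(1,\cdot)$ and $S(1,\cdot)$ satisfy the \emph{same} twisted multiplicativity (the twist being $\left(\frac{r_2}{r_1}\right)\left(\frac{r_1}{r_2}\right)$) and agree on primes.

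For part~(ii) your route genuinely differs from the paper's. You factor $r$ via CRT, carry the twist $\left(\frac{r_2}{r_1}\right)\left(\frac{r_1}{r_2}\right)$, and then kill the prime-power factor $T(n,p^s)$ for $s\ge 2$ by a parity split (Ramanujan sum for $s$ even, inner sum over $c$ for $s$ odd). The paper instead works directly on $r=p^2s$ without any multiplicativity: writing $j=ks+q$ with $0\le k<p^2$, $1\le q\le s$, one has $\left(\frac{ks+q}{r}\right)=\left(\frac{q}{s}\right)$ and the inner sum $\sum_{k=0}^{p^2-1}e(k/p^2)=0$ finishes it in one line. Your approach is more structural and reusable; the paper's is shorter and avoids tracking the Jacobi-symbol twist altogether.
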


\begin{proof} For every $r$ odd the Jacobi symbol $j\mapsto \left(\frac{j}{r}\right)$ is a real character (mod $r$) and
$T(n,r)= \left(\frac{n}{r}\right) T(1,r)$ holds if $\gcd(n,r)=1$. See, e.g., \cite[Ch.\ 7]{Hua1982}.

i) If $r$ is squarefree, then $j\mapsto \left(\frac{j}{r}\right)$ is
a primitive character (mod $r$). Thus, $T(1,r)= \sqrt{r}$ for
$\left( \frac{-1}{r} \right)=1$ and $T(1,r)=i \sqrt{r}$ for $\left(
\frac{-1}{r} \right)=-1$ (\cite[Th.\ 7.5.8]{Hua1982}), giving
\eqref{def_T}.

ii) We show that if $r$ is not squarefree, then $T(1,r)=0$. Here $r$ can be written as $r=p^2s$, where $p$ is a prime and by putting $j=ks+q$,
\begin{equation*}
T(1,r) =\sum_{q=1}^s \sum_{k=0}^{p^2-1} \left( \frac{ks+q}{r} \right) e((ks+q)/r),
\end{equation*}
where
\begin{equation*}
\left( \frac{ks+q}{r} \right) = \left( \frac{ks+q}{p^2} \right) \left( \frac{ks+q}{s} \right) = \left( \frac{q}{s} \right)
\end{equation*}
and deduce
\begin{equation*}
T(1,r) =\sum_{q=1}^s \left( \frac{q}{s} \right) e(q/(p^2s)) \sum_{k=0}^{p^2-1} e(k/p^2)=0,
\end{equation*}
since the inner sum is zero using \eqref{id_fam}.
\end{proof}

Note that properties of the sum $T(n,r)$, including certain orthogonality results were obtained by Cohen \cite{Coh1954Duke} using different arguments.

\begin{prop} {\rm (\cite[Cor.\ 2]{Coh1954Duke})} Assume that $k=2m+1$ ($m\in \N_0$), $r\in \N$ is odd, $n\in \Z$ such that $\gcd (n,r)=1$,
$\a=(a_1,\ldots,a_k) \in \Z^k$, $\gcd(a_1\cdots a_k,r)=1$. Then
\begin{equation*}
N_{2m+1}(n,r,\a)= r^{2m} \sum_{d\mid r} \frac{\mu^2(d)}{d^m} \left(\frac{(-1)^m na_1\cdots a_{2m+1}}{d} \right).
\end{equation*}
\end{prop}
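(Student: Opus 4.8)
The plan is to start from Proposition~\ref{Prop_3_2}, which for odd $r$ with $\gcd(a_1\cdots a_k,r)=1$ gives
\[
N_{2m+1}(n,r,\a)= r^{2m} \sum_{d\mid r} \frac{i^{(2m+1)(d-1)^2/4}}{d^{(2m+1)/2}} \left(\frac{a_1\cdots a_{2m+1}}{d}\right) \sum_{\substack{\ell=1\\(\ell,d)=1}}^d \left(\frac{\ell}{d}\right)^{2m+1} e(-\ell n/d).
\]
Since $k=2m+1$ is odd, $\left(\frac{\ell}{d}\right)^{2m+1}=\left(\frac{\ell}{d}\right)$, so the inner sum is precisely $T(-n,d)$ in the notation of the previous section (note $\gcd(n,d)=1$ because $d\mid r$ and $\gcd(n,r)=1$). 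So everything reduces to plugging in Lemma~\ref{lemma_5_1}: the term for a divisor $d$ vanishes unless $d$ is squarefree, which is what produces the $\mu^2(d)$ factor, and for squarefree $d$ we substitute the explicit value of $T(-n,d)$.

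The key bookkeeping step is to combine the three $d$-dependent phase factors — namely $i^{(2m+1)(d-1)^2/4}$, the factor $\left(\frac{-n}{d}\right)$ coming from $T(-n,d)=\left(\frac{-n}{d}\right)T(1,d)$, and the factor $i^{(d-1)/2}$-type phase sitting inside $T(1,d)=i^{(d-1)^2/4}\sqrt d$ (i.e. $\sqrt d$ when $d\equiv 1$, $i\sqrt d$ when $d\equiv 3$ (mod $4$)) — and check they collapse correctly. For $d$ squarefree, $\sqrt d$ together with the $d^{-(2m+1)/2}=d^{-m}\,d^{-1/2}$ leaves exactly $d^{-m}$, which is the desired shape. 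The remaining phase is $i^{(2m+1)(d-1)^2/4}\cdot i^{(d-1)^2/4}=i^{(2m+2)(d-1)^2/4}=i^{(m+1)(d-1)^2/2}$; since $(d-1)^2/4$ is an integer and $(d-1)/2\equiv (d-1)^2/2 \pmod 2$ when... more carefully, for odd $d$ one has $i^{(d-1)^2/4}=\left(\frac{-1}{d}\right)^{?}$ — the cleanest route is to observe $i^{(d-1)^2/4}$ equals $1$ if $d\equiv 1$ and $i$ if $d\equiv 3$ (mod $4$), hence equals $i^{(d-1)/2}$, and $i^{(m+1)(d-1)/2}$; then combining with $\left(\frac{a_1\cdots a_{2m+1}}{d}\right)\left(\frac{-n}{d}\right)=\left(\frac{-na_1\cdots a_{2m+1}}{d}\right)$ and using $\left(\frac{-1}{d}\right)=i^{(d-1)}= (-1)^{(d-1)/2}$ one absorbs the sign $(-1)^m$ into $\left(\frac{(-1)^m na_1\cdots a_{2m+1}}{d}\right)$ by writing $i^{(m+1)(d-1)/2}$ in terms of $\left(\frac{-1}{d}\right)^{?}$ and a real power; the net effect is that the phase times $\left(\frac{-n a_1\cdots a_{2m+1}}{d}\right)$ equals $\left(\frac{(-1)^m n a_1\cdots a_{2m+1}}{d}\right)$ for all squarefree odd $d$.

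The main obstacle is exactly this phase reconciliation: tracking $i^{(2m+1)(d-1)^2/4}$ and the $i$ inside the Gauss-type sum for $d\equiv 3$ (mod $4$), making sure the total power of $i$ is even so that it reduces to a genuine Jacobi symbol $\left(\frac{-1}{d}\right)^{j}=(-1)^{j(d-1)/2}$ and not a stray factor of $i$, and then checking that the parity $j$ matches $m$ so the $(-1)^m$ lands inside the symbol. This should be verified by checking the two residue classes $d\equiv 1,3$ (mod $4$) separately and, within each, the parity of $m$; once the squarefree case is handled, assembling the sum over $d\mid r$ and inserting $\mu^2(d)$ for the non-squarefree divisors completes the proof.
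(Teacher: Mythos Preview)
Your plan is exactly the paper's: apply Proposition~\ref{Prop_3_2}, recognise the inner sum as $T(-n,d)$, invoke Lemma~\ref{lemma_5_1} to kill the non-squarefree $d$ and insert $i^{(d-1)^2/4}\left(\frac{-n}{d}\right)\sqrt d$, then reconcile the phases. The paper compresses the last step into the single line ``which gives the result by evaluating the powers of $i$'', so your case split on $d\bmod 4$ is precisely what is needed there.

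One genuine slip in your bookkeeping paragraph: the identity $i^{(d-1)^2/4}=i^{(d-1)/2}$ is \emph{false} (take $d=5$: the left side is $i^4=1$, the right side is $i^2=-1$). What is true is that $(d-1)^2/4\equiv 0\pmod 4$ when $d\equiv 1\pmod 4$ and $(d-1)^2/4\equiv 1\pmod 4$ when $d\equiv 3\pmod 4$, so $i^{(2m+2)(d-1)^2/4}$ equals $1$ in the first case and $(-1)^{m+1}$ in the second; combining with $\left(\frac{-1}{d}\right)=(-1)^{(d-1)/2}$ one lands on $(-1)^{m(d-1)/2}=\left(\frac{(-1)^m}{d}\right)$, as desired. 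Your proposed residue-class check recovers this, so the plan is sound; just don't route it through the incorrect intermediate identity.
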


\begin{proof} Apply Proposition \ref{Prop_3_2}. For $k$ odd the inner sum of \eqref{eq_r_odd} is $T(-n,d)$,
where $T(n,r)$ is given by \eqref{def_T}. Since $r$ is odd and
$\gcd(n,r)=1$, if $d\mid r$, then $d$ is also odd and $\gcd(d,r)=1$. We deduce by
Lemma \ref{lemma_5_1} that
\begin{align*}
N_k(n,r,\a)
& = r^{k-1} \sum_{d\mid r} \frac{i^{k(d-1)^2/4}}{d^{k/2}} \left(
\frac{a_1\cdots a_k}{d} \right) T(-n,d) \\
& = r^{k-1} \sum_{\substack{d\mid r\\ d \text{ squarefree}}} \frac{i^{k(d-1)^2/4}}{d^{k/2}}  \left(
\frac{a_1\cdots a_k}{d} \right) i^{(d-1)^2/4}\, \left( \frac{-n}{d} \right) \sqrt{d} \\
& = r^{k-1} \sum_{d\mid r} \frac{\mu^2(d)}{d^{(k-1)/2}}  i^{(k+1)(d-1)^2/4} \left(
\frac{-na_1\cdots a_k}{d} \right),
\end{align*}
which gives the result by evaluating the powers of $i$.
\end{proof}

\begin{cor} If $k=4m+1$ ($m\in \N_0$), $r\in \N$ is odd, $n\in \Z$, $\gcd(n,r)=1$ and
$a_1\cdots a_k=1$ (in particular if $a_1=\cdots =a_k=1$), then
\begin{equation*}
N_{4m+1}(n,r,\a)= r^{4m} \sum_{d\mid r} \frac{\mu^2(d)}{d^{2m}} \left(
\frac{n}{d} \right).
\end{equation*}
\end{cor}

\begin{cor} \label{cor_5_4} If $k=4m+3$ ($m\in \N_0$), $r\in \N$ is odd, $n\in \Z$, $\gcd(n,r)=1$ and
$a_1\cdots a_k=1$ (in particular if $a_1=\cdots =a_k=1$), then
\begin{equation*}
N_{4m+3}(n,r,\a) = r^{4m+2} \sum_{d\mid r} \frac{\mu^2(d)}{d^{2m+1}} (-1)^{(d-1)/2}
\left(\frac{n}{d} \right).
\end{equation*}
\end{cor}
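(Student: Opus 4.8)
The plan is to derive Corollary \ref{cor_5_4} as the specialization $k = 4m+3$, $a_1\cdots a_k = 1$ of the immediately preceding Proposition. First I would invoke that Proposition with $k = 2m'+1$ where $2m'+1 = 4m+3$, i.e., $m' = 2m+1$. Substituting into the formula
\begin{equation*}
N_{2m'+1}(n,r,\a) = r^{2m'} \sum_{d\mid r} \frac{\mu^2(d)}{d^{m'}} \left(\frac{(-1)^{m'} n a_1\cdots a_{2m'+1}}{d}\right),
\end{equation*}
we get $r^{2m'} = r^{4m+2}$ and $d^{m'} = d^{2m+1}$, which already matches the exponents in the target statement.

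The only remaining point is to handle the sign $(-1)^{m'} = (-1)^{2m+1} = -1$ appearing inside the Jacobi symbol together with the hypothesis $a_1\cdots a_k = 1$. With $a_1\cdots a_{4m+3} = 1$, the numerator $(-1)^{m'} n a_1\cdots a_k$ reduces to $-n$, so the symbol becomes $\left(\frac{-n}{d}\right)$. I would then split this using the multiplicativity of the Jacobi symbol in the numerator, $\left(\frac{-n}{d}\right) = \left(\frac{-1}{d}\right)\left(\frac{n}{d}\right)$, and apply the standard evaluation $\left(\frac{-1}{d}\right) = (-1)^{(d-1)/2}$ for $d$ odd (used already in the proof of Proposition \ref{prop_4_1}). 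Note that $d \mid r$ with $r$ odd forces $d$ odd, so this is legitimate; moreover, when $\gcd(d,r/d)>1$ or $\mu^2(d)=0$ the corresponding term vanishes anyway, and when $\mu^2(d)=1$ the symbol $\left(\frac{n}{d}\right)$ is nonzero precisely when $\gcd(n,d)=1$, which is automatic from $\gcd(n,r)=1$. Collecting everything gives
\begin{equation*}
N_{4m+3}(n,r,\a) = r^{4m+2} \sum_{d\mid r} \frac{\mu^2(d)}{d^{2m+1}} (-1)^{(d-1)/2} \left(\frac{n}{d}\right),
\end{equation*}
as claimed.

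There is essentially no obstacle here: this is a routine bookkeeping specialization, and the ``hard part'' — the evaluation of the character sum $T(n,r)$ via Lemma \ref{lemma_5_1} and the manipulation of Gauss sums — has already been carried out in the preceding Proposition. The one place requiring a modicum of care is keeping track of the parity of $m' = 2m+1$ so that the sign $(-1)^{m'}$ is correctly identified as $-1$ (in contrast to the $k=4m+1$ case, Corollary \ref{cor_5_4}'s cousin, where $m' = 2m$ is even and no sign appears); but this is immediate. I would therefore present the argument in two or three lines: substitute $m' = 2m+1$ into the Proposition, use $a_1\cdots a_k = 1$ to simplify the numerator to $-n$, and split off $\left(\frac{-1}{d}\right) = (-1)^{(d-1)/2}$.
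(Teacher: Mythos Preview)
Your proposal is correct and matches the paper's approach exactly: the paper states Corollary~\ref{cor_5_4} without proof as an immediate specialization of the preceding Proposition, and your argument --- substitute $m'=2m+1$, use $a_1\cdots a_k=1$, and split off $\left(\frac{-1}{d}\right)=(-1)^{(d-1)/2}$ --- is precisely that specialization spelled out in full.
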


This proves the identities \eqref{k_1} and \eqref{k_3}.

\begin{cor} If $k=4m+3$ ($m\in \N_0$), $r\in \N$ is odd, $n\in \Z$, $\gcd(n,r)=1$ and
$a_1\cdots a_k=-1$, then
\begin{equation*}
N_{4m+3}(n,r,\a) = r^{2m+1} \psi_{2m+1}(r).
\end{equation*}
\end{cor}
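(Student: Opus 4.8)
The plan is to derive this corollary directly from the preceding Proposition (the one giving $N_{2m+1}(n,r,\a)$ for $k=2m+1$ odd), exactly in parallel with how Corollary~\ref{cor_5_4} was obtained. First I would set $k=4m+3$, so that $m$ in the general proposition corresponds to $2m+1$ here; the proposition then gives
\begin{equation*}
N_{4m+3}(n,r,\a)= r^{4m+2} \sum_{d\mid r} \frac{\mu^2(d)}{d^{2m+1}} \left(\frac{(-1)^{2m+1} n a_1\cdots a_{4m+3}}{d} \right).
\end{equation*}
Now I would substitute the hypothesis $a_1\cdots a_{4m+3}=-1$, together with $(-1)^{2m+1}=-1$, so that the numerator of the Jacobi symbol becomes $(-1)(-1)n = n$. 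Hence the sum collapses to $\sum_{d\mid r} \mu^2(d) d^{-(2m+1)} \left(\frac{n}{d}\right)$.

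The key remaining step is to observe that since $\gcd(n,r)=1$ and $d\mid r$, we have $\gcd(n,d)=1$ for every divisor $d$ of $r$, so the Jacobi symbol $\left(\frac{n}{d}\right)$ is nonzero; but more than that, I need it to equal $1$. This is not automatic from $\gcd(n,d)=1$ alone, so I suspect the intended reading is that the stated identity is meant under an additional implicit normalization, or — more likely — that one should recognize $\sum_{d\mid r}\mu^2(d) d^{-(2m+1)}\left(\frac{n}{d}\right)$ as a multiplicative function of $r$ whose value, after multiplying by $r^{4m+2}$, simplifies. Let me instead argue multiplicatively: the function $r\mapsto N_{4m+3}(n,r,\a)/r^{4m+2}$ is multiplicative in $r$ (for $r$ odd), so it suffices to check the claim at prime powers $p^\nu$ with $p$ odd. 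At $r=p^\nu$ the sum over $d\mid p^\nu$ of $\mu^2(d)$ picks out only $d=1$ and $d=p$, giving $1 + p^{-(2m+1)}\left(\frac{n}{p}\right)$. Multiplying by $(p^\nu)^{4m+2}$ and comparing with $\psi_{2m+1}(p^\nu)\cdot (p^\nu)^{2m+1} = p^{\nu(2m+1)} \cdot p^{\nu(2m+1)}(1+p^{-(2m+1)})$ forces $\left(\frac{n}{p}\right)=1$ for all $p\mid r$; so the hypothesis really does need to be $\left(\frac{n}{r}\right)=1$ (i.e., $n$ a quadratic residue mod each prime dividing $r$), or equivalently the corollary as stated should be read with that understanding, paralleling how $N_4(1,r)=rJ_2(r)$ arose in \eqref{form_spec_Jordan} with $n=1$ a square.

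Assuming this, the final computation is routine: with $\left(\frac{n}{d}\right)=1$ whenever $\mu^2(d)=1$ and $d\mid r$, we get
\begin{equation*}
N_{4m+3}(n,r,\a)= r^{4m+2}\sum_{d\mid r}\frac{\mu^2(d)}{d^{2m+1}}
= r^{4m+2}\cdot \frac{1}{r^{2m+1}}\sum_{d\mid r}\mu^2(d)\left(\frac{r}{d}\right)^{2m+1}\frac{1}{(r/d)^{2m+1}}\cdot\frac{(r/d)^{2m+1}}{r^{2m+1}}.
\end{equation*}
More cleanly: $\sum_{d\mid r}\mu^2(d) d^{-(2m+1)} = r^{-(2m+1)}\sum_{d\mid r}\mu^2(d)(r/d)^{2m+1} = r^{-(2m+1)}(\mu^2 * \id_{2m+1})(r) = r^{-(2m+1)}\psi_{2m+1}(r)$, using the definition $\psi_k=\mu^2*\id_k$ recalled in Section~\ref{sect_2}. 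Therefore $N_{4m+3}(n,r,\a)= r^{4m+2}\cdot r^{-(2m+1)}\psi_{2m+1}(r) = r^{2m+1}\psi_{2m+1}(r)$, as claimed. The main obstacle, as flagged, is purely the interpretation of the quadratic-residue hypothesis on $n$; once that is pinned down the proof is a two-line convolution identity.
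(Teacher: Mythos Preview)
Your overall approach---specialize the general Proposition for $k=2m+1$ to $k=4m+3$ and simplify the Jacobi symbol---is exactly the paper's intended (implicit) derivation, paralleling Corollary~\ref{cor_5_4}. And your substitution is correct: with $M=2m+1$ one has $(-1)^M a_1\cdots a_k=(-1)(-1)=1$, so the symbol reduces to $\left(\frac{n}{d}\right)$, and your final convolution step $\sum_{d\mid r}\mu^2(d)d^{-(2m+1)}=r^{-(2m+1)}(\mu^2*\id_{2m+1})(r)=r^{-(2m+1)}\psi_{2m+1}(r)$ is exactly right once that symbol equals $1$.

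The obstacle you flag is genuine, not a gap in your understanding: the corollary as printed is false without an extra hypothesis on $n$. A concrete check: take $m=0$, $r=3$, $\a=(1,1,-1)$. Then $N_3(1,3,\a)=12=3\,\psi_1(3)$ but $N_3(2,3,\a)=6\neq 12$, matching your formula $9\bigl(1+\tfrac{1}{3}\left(\tfrac{n}{3}\right)\bigr)$. So the claimed identity cannot hold for all $n$ coprime to $r$. The natural repair is the one you propose: require $\left(\frac{n}{p}\right)=1$ for every prime $p\mid r$ (equivalently, that $(-1)^{2m+1}na_1\cdots a_k$ be a square modulo each such $p$; the cleanest special case is $na_1\cdots a_k=-1$). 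Under that hypothesis your two-line proof is complete and matches the paper's method. In short: your diagnosis is correct, and the issue lies in the statement, not in your argument.
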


To prove the next result we need the evaluation of
\begin{equation*}
V(r) = T(0,r) = \sum_{\substack{j=1\\\gcd(j,r)=1}}^r
\left(\frac{j}{r}\right),
\end{equation*}
not given by Lemma \ref{lemma_5_1}.

\begin{lemma} \label{lemma_5_5} If $r\in \N$ is odd, then
\begin{equation*}
V(r) = \begin{cases} \varphi(r), &  \text{ if \ $r$ is a square;} \\
0, &  \text{ otherwise.} \end{cases}
\end{equation*}
\end{lemma}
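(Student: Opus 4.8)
The plan is to exploit the multiplicativity of the function $r \mapsto V(r)$ and reduce to prime powers. One clean way to see the whole statement at once is to note that, for $r$ odd, $j \mapsto \left(\frac{j}{r}\right)$ is a real Dirichlet character modulo $r$, so $V(r)$ is the complete character sum over a reduced residue system; it equals $\varphi(r)$ when this character is principal and $0$ otherwise, and the lemma then reduces to deciding when the Jacobi symbol character is trivial on the unit group mod $r$. To stay closer to the elementary tone of the paper, however, I would instead argue directly via multiplicativity and an explicit prime-power computation.

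First I would check that $V$ is multiplicative. If $\gcd(r_1,r_2)=1$, the Chinese Remainder Theorem puts the residues $j$ with $1\le j\le r_1r_2$ and $\gcd(j,r_1r_2)=1$ in bijection with pairs $(j_1,j_2)$, $\gcd(j_i,r_i)=1$, and multiplicativity of the Jacobi symbol in the denominator gives $\left(\frac{j}{r_1r_2}\right)=\left(\frac{j_1}{r_1}\right)\left(\frac{j_2}{r_2}\right)$; summing yields $V(r_1r_2)=V(r_1)V(r_2)$. Hence it suffices to evaluate $V(p^e)$ for $p$ an odd prime and $e\ge 1$.

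For a prime power, $\left(\frac{j}{p^e}\right)=\left(\frac{j}{p}\right)^e$ depends only on $j\bmod p$. If $e$ is even then $\left(\frac{j}{p}\right)^e=1$ for every $j$ coprime to $p$, so $V(p^e)=\varphi(p^e)$. If $e$ is odd I would group the $\varphi(p^e)=p^{e-1}(p-1)$ admissible $j$ by their residue $a\not\equiv 0\pmod p$; each class contains exactly $p^{e-1}$ of them, whence $V(p^e)=p^{e-1}\sum_{a=1}^{p-1}\left(\frac{a}{p}\right)=0$, since the Legendre symbol sums to zero over a reduced residue system mod an odd prime. Thus $V(p^e)=\varphi(p^e)$ for $e$ even and $V(p^e)=0$ for $e$ odd.

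Finally I would reassemble: with $r=p_1^{e_1}\cdots p_t^{e_t}$, multiplicativity gives $V(r)=\prod_i V(p_i^{e_i})$. If $r$ is a perfect square all $e_i$ are even and the product equals $\prod_i\varphi(p_i^{e_i})=\varphi(r)$; otherwise some $e_i$ is odd, the corresponding factor vanishes, and $V(r)=0$ (consistent with $V(1)=1=\varphi(1)$). There is no genuine obstacle here; the only step needing a little care is the odd-exponent case of the prime-power evaluation, where one must correctly count the $j\in[1,p^e]$ lying in a given nonzero residue class mod $p$ before invoking $\sum_{a=1}^{p-1}\left(\frac{a}{p}\right)=0$.
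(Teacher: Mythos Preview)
Your proof is correct, but it follows a different path from the paper's. The paper does not invoke multiplicativity of $V$ or reduce to prime powers; instead it argues globally. For $r=t^2$ a square it observes, as you do, that $\left(\frac{j}{t^2}\right)=1$ whenever $\gcd(j,r)=1$, giving $V(r)=\varphi(r)$. For $r$ not a square it picks a prime $p$ with $r=p^{\nu}s$, $\nu$ odd, $\gcd(p,s)=1$, and uses the Chinese remainder theorem to construct a single $j_0$ with $\gcd(j_0,r)=1$ and $\left(\frac{j_0}{r}\right)=-1$; the bijection $j\mapsto j j_0$ on reduced residues then forces $V(r)=-V(r)$, hence $V(r)=0$. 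This is exactly the ``nonprincipal character sums to zero'' argument you sketched in your opening remark, carried out by hand. Your detailed route via multiplicativity and the Legendre-symbol identity $\sum_{a=1}^{p-1}\left(\frac{a}{p}\right)=0$ is a clean alternative that avoids the explicit construction of $j_0$, at the cost of first verifying that $V$ factors over coprime moduli; conversely, the paper's translation trick handles the non-square case in one stroke without any prime-power bookkeeping.
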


\begin{proof} If $r=t^2$ is a square, then $\left(\frac{j}{r}\right)=\left(\frac{j}{t^2}\right)=1$ for every
$j$ with $\gcd(j,r)=1$ and deduce that $V(r)=\varphi(r)$.

Now assume that $r$ is not a square. Then, since $r$ is odd, there is a prime $p>2$ such
that $r=p^{\nu}s$, where $\nu$ is odd and $\gcd(p,s)=1$. First we
show that there exists an integer $j_0$ such that $(j_0,r)=1$ and
$\left(\frac{j_0}{r}\right)=-1$. Indeed, let $c$ be a quadratic
nonresidue (mod $p$) and consider the simultaneous congruences
$x\equiv c$ (mod $p$), $x\equiv 1$ (mod $s$). By the Chinese
remainder theorem there exists a solution $x=j_0$ satisfying
\begin{equation*}
\left(\frac{j_0}{r}\right)=\left(\frac{j_0}{p}\right)^{\nu} \left(\frac{j_0}{s}\right)= \left(\frac{c}{p}\right)^{\nu} \left(\frac{1}{s}\right)=
(-1)^{\nu}=-1,
\end{equation*}
since $\nu$ is odd. Hence
\begin{equation*}
V(r)= \sum_{\substack{j=1\\\gcd(j,r)=1}}^r \left(\frac{jj_0}{r}\right)= \sum_{\substack{j=1\\\gcd(j,r)=1}}^r
\left(\frac{j}{r}\right) \left(\frac{j_0}{r}\right)= - \sum_{\substack{j=1\\\gcd(j,r)=1}}^r \left(\frac{j}{r}\right)= -V(r),
\end{equation*}
giving that $V(r)=0$.
\end{proof}

\begin{prop} \label{Prop_5_7} {\rm (\cite[Cor.\ 1]{Coh1954Duke})} Assume that $k,r\in \N$ are odd, $n=0$ and $\a=(a_1,\ldots,a_k)
\in \Z^k$, $\gcd(a_1\cdots a_k,r)=1$.
Then
\begin{equation*}
N_k(0,r,\a)= r^{k-1} \sum_{d^2 \mid r} \frac{\varphi(d)}{d^{k-1}},
\end{equation*}
which does not depend on $\a$.
\end{prop}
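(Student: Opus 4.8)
The plan is to specialize Proposition \ref{Prop_3_2} to the case $n=0$ and then invoke Lemma \ref{lemma_5_5}. Putting $n=0$ in \eqref{eq_r_odd}, the factor $e(-\ell n/d)$ becomes $1$, and since $k$ is odd we have $\left(\frac{\ell}{d}\right)^k=\left(\frac{\ell}{d}\right)$; hence the inner sum over $\ell$ is precisely $V(d)=T(0,d)=\sum_{(\ell,d)=1}\left(\frac{\ell}{d}\right)$. This yields
\begin{equation*}
N_k(0,r,\a)= r^{k-1}\sum_{d\mid r}\frac{i^{k(d-1)^2/4}}{d^{k/2}}\left(\frac{a_1\cdots a_k}{d}\right)V(d).
\end{equation*}

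Next I would apply Lemma \ref{lemma_5_5}: for $d$ odd, $V(d)=0$ unless $d$ is a perfect square, in which case $V(d)=\varphi(d)$. So only the square divisors of $r$ survive, and for such $d$ two simplifications occur. First, writing $d=e^2$ with $e\mid r$, we get $\left(\frac{a_1\cdots a_k}{e^2}\right)=\left(\frac{a_1\cdots a_k}{e}\right)^2=1$, using $\gcd(a_1\cdots a_k,r)=1$. Second, an odd square satisfies $d\equiv 1\pmod 8$, so $(d-1)^2/4$ is a multiple of $16$ and $i^{k(d-1)^2/4}=1$. Thus both twisting factors are trivial on the surviving divisors, and
\begin{equation*}
N_k(0,r,\a)= r^{k-1}\sum_{\substack{d\mid r\\ d\text{ a square}}}\frac{\varphi(d)}{d^{k/2}}.
\end{equation*}

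Finally I would reindex by writing $d=e^2$: the squares dividing $r$ are exactly the numbers $e^2$ with $e^2\mid r$. Using $\varphi(e^2)=e\,\varphi(e)$ and $d^{k/2}=e^k$, the summand becomes $\varphi(e)/e^{k-1}$, which gives $N_k(0,r,\a)=r^{k-1}\sum_{e^2\mid r}\varphi(e)/e^{k-1}$, and the right-hand side visibly does not involve $\a$.

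Once Lemma \ref{lemma_5_5} is available, the argument is essentially bookkeeping; the only place demanding a little care is checking that both the power of $i$ and the Jacobi symbol $\left(\frac{a_1\cdots a_k}{d}\right)$ collapse to $1$ on the square divisors $d$, which is exactly where the congruence $d\equiv 1\pmod 8$ for odd squares enters.
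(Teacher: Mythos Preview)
Your proof is correct and follows essentially the same route as the paper: specialize Proposition~\ref{Prop_3_2} at $n=0$, identify the inner sum as $V(d)$, apply Lemma~\ref{lemma_5_5} to restrict to square divisors, and then observe that both the power of $i$ and the Jacobi symbol become $1$ on odd squares. You are simply more explicit than the paper in justifying $i^{k(d-1)^2/4}=1$ via $d\equiv 1\pmod 8$ and in carrying out the reindexing $\varphi(e^2)/e^k=\varphi(e)/e^{k-1}$.
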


\begin{proof} From Proposition \ref{Prop_3_2} we have
\begin{equation*}
N_k(0,r,\a)= r^{k-1} \sum_{d\mid r} \frac{i^{k(d-1)^2/4}}{d^{k/2}} \left( \frac{a_1\cdots a_k}{d} \right) V(d),
\end{equation*}
where $V(d)$ is given by Lemma \ref{lemma_5_5}. We deduce
\begin{equation*}
N_k(0,r,\a)= r^{k-1} \sum_{d^2\mid r} \frac{i^{k(d^2-1)^2/4}}{d^k} \left( \frac{a_1\cdots a_k}{d^2} \right)\varphi(d^2)=
r^{k-1} \sum_{d^2\mid r} \frac{\varphi(d^2)}{d^k}.
\end{equation*}
\end{proof}

\begin{remark} {\rm For all the results of this section it was assumed that $\gcd(n,r)=1$. See \cite{Coh1966AMM}
for certain special cases of $\gcd(n,r)>1$.}
\end{remark}


\section{The case \texorpdfstring{$k$}{k} even, \texorpdfstring{$r=2^{\nu}$}{r=2nu}}

In this section let $a_1=\cdots =a_k=1$.

\begin{prop} \label{Prop_6_1} If $k\in \N$ is even and $n\in \Z$ is odd, then $N_k(n,2)=2^{k-1}$ and for every $\nu \in \N$, $\nu \ge 2$,
\begin{equation*}
N_k(n,2^{\nu})= 2^{\nu(k-1)} \left(1-\frac1{2^{k/2-1}}\cos \left(\frac{k\pi}{4} + \frac{n\pi}{2}\right) \right).
\end{equation*}
\end{prop}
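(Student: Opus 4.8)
The plan is to specialize Proposition~\ref{Prop_3_3} to $a_1=\cdots=a_k=1$, so that $a_1+\cdots+a_k=k$, and then to show that for $\nu\ge2$ only one term of the two $t$-sums appearing there survives. Abbreviating the inner sums by
\[
A_t=\sum_{\substack{\ell=1\\ \ell\ \text{odd}}}^{2^{2t}} e(-\ell n/2^{2t})\,(1+i^\ell)^k,\qquad
C_t=\sum_{\substack{\ell=1\\ \ell\ \text{odd}}}^{2^{2t+1}} e\!\bigl(\ell(\tfrac k8-\tfrac n{2^{2t+1}})\bigr),
\]
Proposition~\ref{Prop_3_3} reads $N_k(n,2^\nu)=2^{\nu(k-1)}\bigl(1+\sum_{t=1}^{\lfloor\nu/2\rfloor}2^{-kt}A_t+\sum_{t=1}^{\lfloor(\nu-1)/2\rfloor}2^{-kt}C_t\bigr)$. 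For $\nu=1$ both sums are empty, giving $N_k(n,2)=2^{k-1}$ at once.

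The computational tool is the elementary identity, valid for $q=2^m$ with $m\ge1$ and $\beta\in\R$,
\[
\sum_{\substack{\ell=1\\ \ell\ \text{odd}}}^{q} e(\ell\beta)=e(\beta)\sum_{j=0}^{q/2-1} e(2\beta)^j,
\]
which equals $(q/2)e(\beta)$ if $e(2\beta)=1$ and $\bigl(e(q\beta)-1\bigr)\bigl(e(2\beta)-1\bigr)^{-1}e(\beta)$ otherwise. Since $k$ is even, $(1+i^\ell)^2=2i^\ell$ for $\ell$ odd, hence $(1+i^\ell)^k=2^{k/2}i^{\ell k/2}=2^{k/2}e(\ell k/8)$, and so $A_t=2^{k/2}\sum_{\ell\ \text{odd}}e(\ell\beta)$ with $\beta=k/8-n/2^{2t}$. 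Here $q\beta=2^{2t-3}k-n\in\Z$ (at the borderline exponent $t=1$ this is where $k$ even enters), so $e(q\beta)=1$, while $2\beta=k/4-n/2^{2t-1}$ is not an integer for $t\ge2$: since $n$ is odd its reduced denominator is $2^{2t-1}\ge8$. Thus $A_t=0$ for $t\ge2$. The same argument with $q=2^{2t+1}$ and $\gamma=k/8-n/2^{2t+1}$ gives $q\gamma=2^{2t-2}k-n\in\Z$ and $2\gamma=k/4-n/2^{2t}\notin\Z$ for every $t\ge1$ (again by the oddness of $n$), so $C_t=0$ for all $t\ge1$. Consequently, for $\nu\ge2$,
\[
N_k(n,2^\nu)=2^{\nu(k-1)}\Bigl(1+\tfrac1{2^k}A_1\Bigr).
\]

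Finally I would evaluate $A_1$ directly, since there $\ell$ runs only over $\{1,3\}$: using $(1\pm i)^k=2^{k/2}e(\pm k/8)$ together with $e(-3n/4)=-e(-n/4)$, valid because $n$ is odd, one obtains
\[
A_1=e(-n/4)(1+i)^k+e(-3n/4)(1-i)^k=2^{k/2}e(-n/4)\bigl(e(k/8)-e(-k/8)\bigr)=2^{k/2+1}\,i\,e(-n/4)\sin(\pi k/4).
\]
As $n$ is odd, $e(-n/4)=-i\sin(\pi n/2)$, so $i\,e(-n/4)=\sin(\pi n/2)$, and since $\cos(\pi n/2)=0$ we get $\sin(\pi k/4)\sin(\pi n/2)=-\cos(\pi k/4+\pi n/2)$. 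Hence $2^{-k}A_1=-2^{1-k/2}\cos\bigl(\tfrac{k\pi}4+\tfrac{n\pi}2\bigr)$, which is exactly the claimed formula. I expect the only delicate point to be the parity/divisibility bookkeeping that pins down $t=1$ as the unique surviving index — precisely where the hypotheses ``$k$ even'' and ``$n$ odd'' are used; after that the computation is a short trigonometric rearrangement.
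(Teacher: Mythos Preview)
Your proof is correct and follows essentially the same approach as the paper: specialize Proposition~\ref{Prop_3_3}, show that all terms in the two $t$-sums vanish except $A_1$, and then evaluate $A_1$ explicitly. The only cosmetic difference is that the paper splits the odd residues into $\ell\equiv 1,3\pmod 4$ before summing, whereas you use the identity $(1+i^\ell)^k=2^{k/2}e(\ell k/8)$ to treat all odd $\ell$ at once via a single geometric series; this is a slight streamlining but not a genuinely different route.
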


\begin{proof} We obtain from Proposition \ref{Prop_3_3} by separating the terms according to $\ell=4u+1$ and $\ell=4u+3$, respectively,
\begin{equation*}
N_k(n,2^{\nu})= 2^{\nu(k-1)} \left( 1 + \sum_{t=1}^{\lfloor \nu/2 \rfloor} \frac1{2^{kt}} A_t +
\sum_{t=1}^{\lfloor (\nu-1)/2 \rfloor} \frac1{2^{kt}}B_t \right),
\end{equation*}
where
\begin{align*}
A_t & = (1+i)^k \sum_{u=0}^{2^{2t-2}-1} e(-(4u+1)n/2^{2t}) + (1-i)^k \sum_{u=0}^{2^{2t-2}-1} e(-(4u+3)n/2^{2t}) \\
& = \left((1+i)^k e(-n/2^{2t})+(1-i)^k e(-3n/2^{2t})\right) \sum_{u=0}^{2^{2t-2}-1} e(-un/2^{2t-2}),
\end{align*}
and
\begin{align*}
B_t &= \sum_{u=0}^{2^{2t-1}-1} \left( e(-(4u+1)n/2^{2t+1} +(4u+1)k/8) + e(-(4u+3)n/2^{2t+1} +(4u+3)k/8) \right) \\
& = \left( e(k/8-n/2^{2t+1})+ e(3k/8-3n/2^{2t+1}) \right)  \sum_{u=0}^{2^{2t-1}-1} e(-un/2^{2t-1} +ku/2).
\end{align*}

Since $n$ is odd, $n/2^{2t-2}\notin \Z$ for every $t\ge 2$. It follows that $A_t=0$ for every $t\ge 2$. Also,
\begin{equation*}
A_1= (1+i)^k e(-n/4) + (1-i)^k e(-3n/4) = - 2^{k/2+1}
\cos(k\pi/4+n\pi/2).
\end{equation*}

Similarly, since $k$ is even and $n$ is odd, $k/2-n/2^{2t-1}\notin \Z$ for every $t\ge 1$. It follows that $B_t=0$ for every $t\ge 1$.
This completes the proof.
\end{proof}

\begin{cor} \label{cor_6_2} If $k=4m$ ($m\in \N$) and $n\in \Z$ is odd, then for every $\nu \in \N$,
\begin{equation*}
N_{4m}(n,2^{\nu})= 2^{\nu(4m-1)}.
\end{equation*}
\end{cor}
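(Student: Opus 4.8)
The plan is to read this off Proposition~\ref{Prop_6_1} by evaluating its cosine term at $k=4m$. First I would handle $\nu=1$ separately: Proposition~\ref{Prop_6_1} gives $N_k(n,2)=2^{k-1}$, and substituting $k=4m$ this is $N_{4m}(n,2)=2^{4m-1}=2^{1\cdot(4m-1)}$, which is precisely the asserted value when $\nu=1$.

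For $\nu\ge 2$ I would substitute $k=4m$ into
\[
N_k(n,2^{\nu})= 2^{\nu(k-1)} \left(1-\frac1{2^{k/2-1}}\cos \left(\frac{k\pi}{4} + \frac{n\pi}{2}\right) \right),
\]
so that the argument of the cosine becomes $m\pi+\frac{n\pi}{2}$. By the addition formula this cosine equals $(-1)^m\cos\left(\frac{n\pi}{2}\right)$, and since $n$ is odd, $\frac{n\pi}{2}$ is an odd multiple of $\frac{\pi}{2}$, so $\cos\left(\frac{n\pi}{2}\right)=0$. Hence the bracketed factor is $1$ and $N_{4m}(n,2^{\nu})=2^{\nu(4m-1)}$, completing the proof.

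There is no genuine obstacle here; the one point deserving a moment's attention is that it is exactly the oddness of $n$ --- a standing hypothesis in Proposition~\ref{Prop_6_1} --- that kills the cosine once $k$ is a multiple of $4$ and $\frac{k\pi}{4}$ becomes an integer multiple of $\pi$. It is worth observing that, in contrast to the cases $k\equiv 2\pmod 4$, no finer information about $n$ (such as its residue mod $4$) enters, so the count is the same clean power of $2$ for every odd $n$ and every $\nu$.
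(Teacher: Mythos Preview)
Your argument is correct and is exactly the intended derivation: the paper states Corollary~\ref{cor_6_2} as an immediate consequence of Proposition~\ref{Prop_6_1}, and your substitution $k=4m$ together with $\cos\!\left(m\pi+\tfrac{n\pi}{2}\right)=0$ for odd $n$ is precisely the computation that justifies it.
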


\begin{cor} \label{Cor_6_3} If $k=4m+2$ ($m\in \N_0$) and $n=2t+1\in \Z$ is odd, then $N_{4m+2}(n,2)=2^{4m+1}$ and for every $\nu \ge 2$,
\begin{equation*}
N_{4m+2}(n,2^{\nu})= 2^{\nu(4m+1)} \left(1+\frac{(-1)^{m+t}}{2^{2m}} \right).
\end{equation*}
\end{cor}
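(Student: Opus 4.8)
The plan is to derive this corollary as an immediate specialization of Proposition~\ref{Prop_6_1}, the only work being the evaluation of the cosine term.

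First I would set $k=4m+2$ in Proposition~\ref{Prop_6_1}. The case $\nu=1$ is handled at once: $N_{4m+2}(n,2)=2^{k-1}=2^{4m+1}$. For $\nu\ge 2$, the prefactor becomes $2^{\nu(k-1)}=2^{\nu(4m+1)}$ and the exponent $k/2-1$ equals $2m$, so it remains to simplify
\begin{equation*}
\cos\left(\frac{k\pi}{4}+\frac{n\pi}{2}\right)=\cos\left(\frac{(4m+2)\pi}{4}+\frac{(2t+1)\pi}{2}\right).
\end{equation*}
Here $\frac{(4m+2)\pi}{4}=m\pi+\frac{\pi}{2}$ and $\frac{(2t+1)\pi}{2}=t\pi+\frac{\pi}{2}$, so the argument equals $(m+t)\pi+\pi=(m+t+1)\pi$, whence the cosine is $(-1)^{m+t+1}=-(-1)^{m+t}$.

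Substituting this value gives
\begin{equation*}
1-\frac{1}{2^{2m}}\cdot\left(-(-1)^{m+t}\right)=1+\frac{(-1)^{m+t}}{2^{2m}},
\end{equation*}
which yields the stated formula. There is no real obstacle here; the only point requiring minor care is keeping track of the parities of $m$ and $t$ when reducing $(m+t+1)\pi$ modulo $2\pi$, and noting that the hypothesis $n=2t+1$ odd is exactly what makes Proposition~\ref{Prop_6_1} applicable.
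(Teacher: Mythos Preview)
Your proof is correct and is exactly the specialization the paper intends: Corollary~\ref{Cor_6_3} is stated without proof as an immediate consequence of Proposition~\ref{Prop_6_1}, and your evaluation of the cosine term is precisely the missing computation.
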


By similar arguments one can deduce from Proposition \ref{Prop_3_3}:

\begin{prop} \label{prop_6_4} {\rm (\cite[p.\ 46]{BEW1998})} If $k=4m$ ($m\in \N$) and $n=0$, then for every $\nu \in \N$,
\begin{equation*}
N_{4m}(0,2^{\nu})= 2^{\nu(4m-1)} \left(1+\frac{(-1)^m(2^{(\nu-1)(2m-1)}-1)}{2^{(\nu-1)(2m-1)}(2^{2m-1}-1)} \right).
\end{equation*}
\end{prop}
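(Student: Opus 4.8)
The plan is to specialize Proposition~\ref{Prop_3_3} to $a_1=\cdots=a_k=1$, $k=4m$, $n=0$, and to exploit the fact that once $n=0$ the summands of the inner sums over odd $\ell$ no longer depend on $\ell$. Setting $n=0$ removes every exponential factor $e(-\ell n/2^{2t})$ and $e(-\ell n/2^{2t+1})$, so the $t$-th term of the first sum reduces to $2^{-4mt}(1+i^{\ell})^{4m}$ summed over the $2^{2t-1}$ odd residues $\ell$ modulo $2^{2t}$, while the $t$-th term of the second sum reduces to $2^{-4mt}\,e(\ell\cdot 4m/8)=2^{-4mt}\,e(\ell m/2)$ summed over the $2^{2t}$ odd residues $\ell$ modulo $2^{2t+1}$.

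First I would evaluate these $\ell$-independent factors. For $\ell$ odd we have $i^{\ell}=\pm i$, hence $(1+i^{\ell})^{2}=\pm 2i$ and therefore $(1+i^{\ell})^{4m}=\bigl((1+i^{\ell})^{2}\bigr)^{2m}=(\pm 2i)^{2m}=(-4)^{m}=2^{2m}(-1)^{m}$, independently of $\ell$; likewise $e(\ell m/2)=(-1)^{\ell m}=(-1)^{m}$ since $\ell$ is odd. Carrying the counting factors $2^{2t-1}$ and $2^{2t}$ through, the bracketed quantity in Proposition~\ref{Prop_3_3} becomes
\[
1+(-1)^{m}\Bigl(2^{2m-1}\sum_{t=1}^{\lfloor\nu/2\rfloor}\frac{1}{2^{2t(2m-1)}}+\sum_{t=1}^{\lfloor(\nu-1)/2\rfloor}\frac{1}{2^{2t(2m-1)}}\Bigr).
\]

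The key step is then to merge the two geometric sums. Writing $a=2^{2m-1}$ (so $a\ge 2$, and in particular $a-1\neq 0$, since $m\ge 1$), the first sum is $\sum_{t} a^{1-2t}$, that is $a^{-1}+a^{-3}+\cdots$ with (negated) exponents running over the odd integers up to $2\lfloor\nu/2\rfloor-1$, while the second sum is $a^{-2}+a^{-4}+\cdots$ with exponents running over the even integers up to $2\lfloor(\nu-1)/2\rfloor$. These two top exponents are consecutive integers whose maximum is $\nu-1$, so the odd and even exponents together exhaust $\{1,2,\ldots,\nu-1\}$, uniformly in the parity of $\nu$. Hence the displayed quantity equals $1+(-1)^{m}\sum_{s=1}^{\nu-1}a^{-s}=1+(-1)^{m}\dfrac{a^{\nu-1}-1}{a^{\nu-1}(a-1)}$, and multiplying by $2^{\nu(4m-1)}$ and substituting back $a=2^{2m-1}$ yields the asserted identity (the case $\nu=1$ giving the empty sum and $N_{4m}(0,2)=2^{4m-1}$, in agreement with the formula). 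The only point that needs any care is this last identification of the two index ranges, equivalently treating $\nu$ even and $\nu$ odd on the same footing; everything else is a direct substitution into Proposition~\ref{Prop_3_3} together with the elementary evaluations $(1+i^{\ell})^{4m}=2^{2m}(-1)^{m}$ and $e(\ell m/2)=(-1)^{m}$ for $\ell$ odd.
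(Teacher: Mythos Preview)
Your proof is correct and follows precisely the approach the paper indicates: the paper does not spell out the argument for Proposition~\ref{prop_6_4} but merely says it is obtained ``by similar arguments'' from Proposition~\ref{Prop_3_3}, and your specialization $a_1=\cdots=a_k=1$, $k=4m$, $n=0$, the evaluations $(1+i^{\ell})^{4m}=(-1)^m2^{2m}$ and $e(\ell m/2)=(-1)^m$ for $\ell$ odd, and the merging of the two geometric progressions into $\sum_{s=1}^{\nu-1}a^{-s}$ with $a=2^{2m-1}$ carry this out exactly.
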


\begin{prop} \label{prop_6_5} {\rm (\cite[p.\ 46]{BEW1998})} If $k=4m+2$ ($m\in \N_0$) and $n=0$ for every $\nu \in \N$,
\begin{equation*}
N_{4m+2}(0,2^{\nu})= 2^{\nu(4m+1)}.
\end{equation*}
\end{prop}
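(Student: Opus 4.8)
The plan is to mimic the proof of Proposition~\ref{Prop_6_1}: start from Proposition~\ref{Prop_3_3} specialized to $a_1=\cdots=a_k=1$, $n=0$, and show that every inner sum cancels.

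First I would substitute $k=4m+2$, $a_1=\cdots=a_k=1$ and $n=0$ into Proposition~\ref{Prop_3_3}. The factors $e(-\ell n/2^{2t})$ then reduce to $1$, each factor $1+i^{\ell a_j}$ reduces to $1+i^\ell$, and $e(-\ell n/2^{2t+1}+\ell(a_1+\cdots+a_k)/8)$ reduces to $e(\ell k/8)$, so that
\[
N_{4m+2}(0,2^\nu)=2^{\nu(k-1)}\left(1+\sum_{t=1}^{\lfloor\nu/2\rfloor}\frac{C_t}{2^{kt}}+\sum_{t=1}^{\lfloor(\nu-1)/2\rfloor}\frac{D_t}{2^{kt}}\right),
\]
with $C_t=\sum_{\substack{\ell=1\\ \ell\ \text{odd}}}^{2^{2t}}(1+i^\ell)^k$ and $D_t=\sum_{\substack{\ell=1\\ \ell\ \text{odd}}}^{2^{2t+1}}e(\ell k/8)$. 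It then suffices to prove $C_t=D_t=0$ for all $t\ge1$.

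For $C_t$, I would split the odd $\ell$ according to $\ell\equiv1$ or $\ell\equiv3\pmod4$: then $i^\ell=i$ or $-i$ respectively, and for $t\ge1$ each class contains exactly $2^{2t-2}$ of the $2^{2t-1}$ odd integers in $\{1,\ldots,2^{2t}\}$, whence $C_t=2^{2t-2}\bigl((1+i)^k+(1-i)^k\bigr)=2^{2t-1}\cdot 2^{k/2}\cos(k\pi/4)$; this vanishes because $k=4m+2$ forces $\cos(k\pi/4)=\cos\bigl((2m+1)\pi/2\bigr)=0$. For $D_t$, I would use that $k/8=(2m+1)/4$, so $e(\ell k/8)$ depends only on $\ell\bmod4$, being equal to $e((2m+1)/4)$ when $\ell\equiv1\pmod4$ and to $e(3(2m+1)/4)$ when $\ell\equiv3\pmod4$; since $e((2m+1)/4)+e(3(2m+1)/4)=e((2m+1)/4)\bigl(1+e((2m+1)/2)\bigr)=e((2m+1)/4)(1-1)=0$ and each class contains $2^{2t-1}$ of the $2^{2t}$ odd integers in $\{1,\ldots,2^{2t+1}\}$ for $t\ge1$, we get $D_t=0$. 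Inserting these leaves $N_{4m+2}(0,2^\nu)=2^{\nu(k-1)}=2^{\nu(4m+1)}$, as claimed; the edge cases of small $\nu$, where one or both of the two sums are empty, present no difficulty.

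I do not anticipate a genuine obstacle, since the argument is entirely bookkeeping. The only points needing attention are counting how many odd $\ell$ in each range fall into each residue class modulo $4$, and noting that the two cancellations $(1+i)^k+(1-i)^k=0$ and $e((2m+1)/4)+e(3(2m+1)/4)=0$ are exactly what the hypothesis $k\equiv2\pmod4$ provides --- these are the same two facts that kill $A_1$ and $B_t$ in the proof of Proposition~\ref{Prop_6_1}.
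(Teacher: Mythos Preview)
Your proposal is correct and follows precisely the approach the paper indicates: the paper does not give a detailed proof of Proposition~\ref{prop_6_5} but simply states that it is deduced from Proposition~\ref{Prop_3_3} ``by similar arguments'' to those of Proposition~\ref{Prop_6_1}, which is exactly what you carry out. Your computations of $C_t$ and $D_t$ are accurate, and the two cancellations you identify are indeed the analogues of the vanishing of $A_t$ and $B_t$ in that earlier proof.
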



\section{The case \texorpdfstring{$k$}{k} odd, \texorpdfstring{$r=2^{\nu}$}{r=2nu}}

Now let $k$ be odd, $r=2^{\nu}$, $a_1=\cdots =a_k=1$. By similar arguments as in the previous sections we have

\begin{prop} \label{Prop_7_1} {\rm (\cite[p.\ 46]{BEW1998})} If $k\in \N$ is odd and $n=0$, then for every $\nu \in \N$,
\begin{equation*}
N_k(0,2^{\nu})= 2^{\nu(k-1)} \left(1+\frac{(-1)^{(k^2-1)/8}\cdot (2^{(k-2)\lfloor \nu/2 \rfloor}-1)}
{2^{(k-2)\lfloor \nu/2 \rfloor - (k-3)/2}(2^{k-2}-1) } \right).
\end{equation*}
\end{prop}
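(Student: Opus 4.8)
The plan is to specialize Proposition~\ref{Prop_3_3} to $n=0$ and $a_1=\cdots=a_k=1$ and then to evaluate the two resulting inner sums over $\ell$ in closed form. With these choices every factor $e(-\ell n/2^{2t})$ equals $1$ and every factor $e\bigl(-\ell n/2^{2t+1}+\ell(a_1+\cdots+a_k)/8\bigr)$ equals $e(\ell k/8)$, so Proposition~\ref{Prop_3_3} becomes
\begin{equation*}
N_k(0,2^\nu)=2^{\nu(k-1)}\left(1+\sum_{t=1}^{\lfloor\nu/2\rfloor}\frac{P_t}{2^{kt}}+\sum_{t=1}^{\lfloor(\nu-1)/2\rfloor}\frac{Q_t}{2^{kt}}\right),
\end{equation*}
where $P_t=\sum_{\ell\text{ odd},\,1\le\ell\le2^{2t}}(1+i^\ell)^k$ and $Q_t=\sum_{\ell\text{ odd},\,1\le\ell\le2^{2t+1}}e(\ell k/8)$. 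The whole task then reduces to computing $P_t$ and showing $Q_t=0$ for $t\ge1$.

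For $P_t$: since $i^\ell$ is $i$ when $\ell\equiv1\pmod4$ and $-i$ when $\ell\equiv3\pmod4$, and each of these two classes contains exactly $2^{2t-2}$ of the odd numbers in $\{1,\dots,2^{2t}\}$, we get $P_t=2^{2t-2}\bigl((1+i)^k+(1-i)^k\bigr)$. Writing $1\pm i=\sqrt2\,e(\pm1/8)$ and applying de Moivre gives $(1+i)^k+(1-i)^k=2^{k/2+1}\cos(k\pi/4)$, and for $k$ odd inspection of $k\bmod8$ shows $\cos(k\pi/4)=(-1)^{(k^2-1)/8}/\sqrt2$. Hence $P_t=(-1)^{(k^2-1)/8}\,2^{2t+(k-3)/2}$, so $P_t/2^{kt}=(-1)^{(k^2-1)/8}\,2^{(k-3)/2}\,2^{-(k-2)t}$ is a geometric progression in $t$, and summing it gives
\begin{equation*}
\sum_{t=1}^{\lfloor\nu/2\rfloor}\frac{P_t}{2^{kt}}=(-1)^{(k^2-1)/8}\,2^{(k-3)/2}\cdot\frac{1-2^{-(k-2)\lfloor\nu/2\rfloor}}{2^{k-2}-1}=(-1)^{(k^2-1)/8}\cdot\frac{2^{(k-2)\lfloor\nu/2\rfloor}-1}{2^{(k-2)\lfloor\nu/2\rfloor-(k-3)/2}\,(2^{k-2}-1)},
\end{equation*}
which is exactly the bracketed correction term of the statement.

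For $Q_t$ with $t\ge1$: then $2^{2t+1}\ge8$, and since $k$ is odd the map $\ell\mapsto\ell k$ permutes the odd residues modulo $8$, so each of the four values $e(\ell k/8)$ with $\ell k\equiv1,3,5,7\pmod8$ occurs exactly $2^{2t-2}$ times among the odd $\ell\in\{1,\dots,2^{2t+1}\}$; thus $Q_t=2^{2t-2}\bigl(e(k/8)+e(3k/8)+e(5k/8)+e(7k/8)\bigr)$. Because $e\bigl((\ell+4)k/8\bigr)=e(\ell k/8)\,e(k/2)=-e(\ell k/8)$ for $k$ odd, the four terms cancel in pairs and $Q_t=0$. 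Substituting this back into the first display, and noting that for $\nu=1$ both inner sums are empty so that the formula correctly reduces to $N_k(0,2)=2^{k-1}$, completes the proof.

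The argument uses nothing beyond Proposition~\ref{Prop_3_3} and elementary manipulations of roots of unity. I expect the main obstacle to be purely bookkeeping: getting the residue counts modulo $4$ and modulo $8$ right, identifying $\cos(k\pi/4)=(-1)^{(k^2-1)/8}/\sqrt2$ for odd $k$, and carrying out the geometric-sum simplification so that the result matches the somewhat unwieldy closed form in the statement. The vanishing $Q_t=0$ is the one step that genuinely uses the oddness of $k$ (contrast the case $k$ even treated above, where it is the first sum that degenerates instead).
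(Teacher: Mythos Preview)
Your proof is correct and is exactly the approach the paper has in mind: it merely says ``by similar arguments as in the previous sections,'' i.e., specialize Proposition~\ref{Prop_3_3} and evaluate the inner sums as in the proof of Proposition~\ref{Prop_6_1}. Your bookkeeping for $P_t$, the identification $\cos(k\pi/4)=(-1)^{(k^2-1)/8}/\sqrt2$, the geometric-series summation, and the vanishing of $Q_t$ via the pairing $\ell\leftrightarrow\ell+4$ are all fine.
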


Other cases can also be considered, for example:

\begin{prop} \label{Prop_7_2} If $k=4m+3$ and $n=4t+1$ ($m,n\in \N_0$), then for every $\nu \in \N$,
\begin{equation*}
N_{4m+3}(n,2^{\nu})= 2^{\nu(4m+2)} \left(1+\frac{(-1)^{m}}{2^{2m+1}} \right).
\end{equation*}
\end{prop}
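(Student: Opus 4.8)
The plan is to follow the proof of Proposition~\ref{Prop_6_1} almost verbatim. Apply Proposition~\ref{Prop_3_3} with $a_1=\cdots=a_k=1$ and $k=4m+3$, so that $a_1+\cdots+a_k=4m+3$; writing $s$ for the outer summation index (to avoid a clash with the $t$ in $n=4t+1$), we get
\begin{equation*}
N_{4m+3}(n,2^{\nu})= 2^{\nu(4m+2)} \left( 1 + \sum_{s=1}^{\lfloor \nu/2 \rfloor} \frac{A_s}{2^{(4m+3)s}} + \sum_{s=1}^{\lfloor (\nu-1)/2 \rfloor} \frac{B_s}{2^{(4m+3)s}} \right),
\end{equation*}
where, separating $\ell\equiv 1$ and $\ell\equiv 3 \pmod{4}$ and factoring out the resulting geometric progression exactly as for $A_t,B_t$ in Proposition~\ref{Prop_6_1},
\begin{equation*}
A_s = \left((1+i)^{4m+3}\, e(-n/2^{2s}) + (1-i)^{4m+3}\, e(-3n/2^{2s})\right) \sum_{u=0}^{2^{2s-2}-1} e(-un/2^{2s-2}),
\end{equation*}
\begin{equation*}
B_s = \left(e(k/8 - n/2^{2s+1}) + e(3k/8 - 3n/2^{2s+1})\right) \sum_{u=0}^{2^{2s-1}-1} e\bigl(u(k/2 - n/2^{2s-1})\bigr).
\end{equation*}

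First I would dispose of the tails of the first family. Since $n=4t+1$ is odd, for $s\ge 2$ one has $2^{2s-2}\ge 2$, hence $n/2^{2s-2}\notin\Z$ and the trailing geometric sum in $A_s$ vanishes; so only $A_1$ survives. For $A_1$ the trailing sum reduces to its $u=0$ term $1$, and using $(1+i)^{4m+3}=(-1)^m 2^{2m+1}(-1+i)$, $(1-i)^{4m+3}=(-1)^m 2^{2m+1}(-1-i)$ together with $e(-n/4)=e(-1/4)=-i$ and $e(-3n/4)=e(-3/4)=i$ (this is where the hypothesis $n\equiv 1 \pmod{4}$ enters), a short computation gives $A_1=(-1)^m 2^{2m+2}$, hence $A_1/2^{4m+3}=(-1)^m/2^{2m+1}$, which is exactly the correction term in the asserted identity.

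The step I expect to require the most care is showing that the whole second family vanishes. For $s\ge 2$ the geometric sum $\sum_{u=0}^{2^{2s-1}-1} e(u\alpha)$ with $\alpha=k/2-n/2^{2s-1}=(2^{2s-2}k-n)/2^{2s-1}$ has odd numerator (even minus odd) and denominator $2^{2s-1}\ge 8$, so $\alpha\notin\Z$ whereas $2^{2s-1}\alpha=2^{2s-2}k-n\in\Z$; the sum is therefore $0$. For $s=1$ that geometric factor equals $\sum_{u=0}^{1}e(u(k-n)/2)=2$ since $k$ and $n$ are both odd; and writing $k-n=2v$ with $v=2(m-t)+1$ odd --- here $k\equiv 3$ and $n\equiv 1\pmod{4}$ are used jointly --- the remaining bracket is $e(v/4)+e(3v/4)=i^{v}+i^{3v}=i^{v}\bigl(1+(-1)^{v}\bigr)=0$. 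Hence $B_s=0$ for every $s$, and substituting back yields $N_{4m+3}(n,2^{\nu})=2^{\nu(4m+2)}\bigl(1+(-1)^m/2^{2m+1}\bigr)$. (The computation literally gives this for $\nu\ge 2$; for $\nu=1$ both sums are empty and $N_{4m+3}(n,2)=2^{4m+2}$, so the case $\nu=1$ is best stated separately, as in Corollary~\ref{Cor_6_3}.)
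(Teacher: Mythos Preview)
Your proof is correct and follows precisely the route the paper intends: the paper gives no explicit proof of Proposition~\ref{Prop_7_2}, saying only that it is obtained ``by similar arguments'' to Proposition~\ref{Prop_6_1}, and your computation does exactly that. Your closing caveat is also well-taken: for $\nu=1$ both sums are empty and one gets $N_{4m+3}(n,2)=2^{4m+2}$, not the stated value (e.g., $N_3(1,2)=4\neq 6$); the paper itself separates the case $\nu=1$ when it applies this proposition in Section~\ref{Sect_spec_cases}.
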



\section{Special cases} \label{Sect_spec_cases}

In this section we consider some special cases and deduce asymptotic formulas for $k=1,2,3,4$ and $\a=(1,\ldots,1)$.

\subsection{The congruence \texorpdfstring{$x^2\equiv 0$}{x2=0} (mod \texorpdfstring{$r$}{r})}

For $k=1$ and $n=0$ we have the congruence $x^2\equiv 0$ (mod $r$). Its number of solutions, $N_1(0,r)$ is the sequence A000188 in \cite{OEIS}.
It is well known and can be deduced directly that $N_1(0,p^{\nu})=p^{\lfloor \nu/2 \rfloor}$ for every prime power $p^{\nu}$ ($\nu \in \N$).
This leads to the Dirichlet series representation
\begin{equation} \label{Dirichlet_8_1}
\sum_{r=1}^{\infty} \frac{N_1(0,r)}{r^s} = \frac{\zeta(2s-1)\zeta(s)}{\zeta(2s)}.
\end{equation}

Our next result corresponds to the classical asymptotic formula of Dirichlet
\begin{equation*}
\sum_{n\le x} \tau(n) = x\log x +(2\gamma-1)x + O(x^{1/2}).
\end{equation*}

\begin{prop} \label{prop_8_1} We have
\begin{equation} \label{error_2_3}
\sum_{r\le x} N_1(0,r)= \frac{3}{\pi^2} x\log x +c x + O(x^{2/3}),
\end{equation}
where $c=\frac{3}{\pi^2}\left(3\gamma-1-\frac{2\zeta'(2)}{\zeta(2)}\right)$.
\end{prop}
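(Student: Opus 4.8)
The plan is to extract the asymptotics of $\sum_{r\le x} N_1(0,r)$ from the Dirichlet series identity \eqref{Dirichlet_8_1}, namely $\sum_{r\ge 1} N_1(0,r) r^{-s} = \zeta(2s-1)\zeta(s)/\zeta(2s)$, by writing the coefficient function as a Dirichlet convolution and then estimating the resulting double sum by the hyperbola method. First I would note that
\[
\frac{\zeta(2s-1)\zeta(s)}{\zeta(2s)} = \zeta(2s-1)\cdot \frac{\zeta(s)}{\zeta(2s)},
\]
where $\zeta(s)/\zeta(2s) = \sum_{n\ge 1}\mu^2(n) n^{-s}$ (generating the squarefree indicator), and $\zeta(2s-1) = \sum_{m\ge 1} m\cdot [m\text{ is a square}]\, m^{-2s}\cdot$ — more precisely $\zeta(2s-1)=\sum_{d\ge 1} d\, (d^2)^{-s}$. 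Hence $N_1(0,r) = \sum_{d^2 e = r} d\,\mu^2(e)$, which one can also verify directly from the multiplicative values $N_1(0,p^\nu) = p^{\lfloor \nu/2\rfloor}$. So $\sum_{r\le x} N_1(0,r) = \sum_{d^2 e\le x} d\,\mu^2(e)$.

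Next I would apply Dirichlet's hyperbola method to this sum, splitting according to whether $d\le D$ or $e\le x/D^2$ for a suitable threshold $D$; the natural choice balancing the two error contributions will turn out to be $D \asymp x^{1/3}$, which is what produces the $O(x^{2/3})$ error term. For the range $d\le D$ I would sum over $e$ first: $\sum_{e\le x/d^2}\mu^2(e) = \tfrac{6}{\pi^2}\cdot\tfrac{x}{d^2} + O\bigl((x/d^2)^{1/2}\bigr)$, using the classical count of squarefree numbers. Multiplying by $d$ and summing over $d\le D$ gives the main term $\tfrac{6}{\pi^2} x \sum_{d\le D} d^{-1}$, whose partial sum $\sum_{d\le D} d^{-1} = \log D + \gamma + O(1/D)$ supplies the $\log x$ and a constant, plus the error $\sum_{d\le D} d\cdot O(x^{1/2} d^{-1}) = O(D x^{1/2})$. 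For the complementary range I would sum over $d$ first: $\sum_{d\le (x/e)^{1/2}} d = \tfrac12\, x/e + O\bigl((x/e)^{1/2}\bigr)$, then sum over $e\le x/D^2$ using $\sum_e \mu^2(e)/e = \tfrac{1}{\zeta(2)}\log(x/D^2)\cdot$ — here one needs the asymptotic $\sum_{e\le y}\mu^2(e)/e = \tfrac{6}{\pi^2}(\log y + \gamma - \tfrac{2\zeta'(2)}{\zeta(2)}) + O(y^{-1/2})$, which is standard (e.g. by partial summation from the squarefree count). Finally I would subtract the overlap $\sum_{d\le D,\, e\le x/D^2} d\,\mu^2(e)$, collect all pieces, and check that with $D = x^{1/3}$ every error term is $O(x^{2/3})$.

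The main obstacle — really the only non-mechanical point — is the careful bookkeeping of the two main terms and the constant: one must combine $\tfrac{6}{\pi^2} x(\log x^{1/3} + \gamma)$ from the first range with the $\tfrac12 x\sum_e \mu^2(e)/e$ contribution from the second range and the overlap subtraction so that the $\log D$ pieces recombine into $\tfrac{3}{\pi^2} x\log x$ and the leftover constants assemble into exactly $c = \tfrac{3}{\pi^2}\bigl(3\gamma - 1 - \tfrac{2\zeta'(2)}{\zeta(2)}\bigr)$. In particular the $-1$ in $c$ arises from the $O(1)$-type term in $\sum_{d\le D} d = \tfrac12 D^2 + \tfrac12 D + O(1)$ and from the tail of $\sum d^{-1}$; tracking these constants correctly is the delicate part. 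An alternative, perhaps cleaner, route is to invoke a general Tauberian/contour argument: the Dirichlet series $\zeta(2s-1)\zeta(s)/\zeta(2s)$ has a double pole at $s=1$ (from $\zeta(2s-1)$ and $\zeta(s)$ together) with the residue computation directly yielding the coefficients of $x\log x$ and $x$, and a Perron-type estimate controlled by the zero-free region giving the error; but since the paper's style favors elementary arguments, I would present the hyperbola-method version and simply remark that the constant $c$ is the coefficient of $x$ in the Laurent expansion of $\zeta(2s-1)\zeta(s)/((s-1)^{-1}\zeta(2s))$ at $s=1$, which provides an independent check.
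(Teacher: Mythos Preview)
Your convolution identity $N_1(0,r)=\sum_{d^2e=r} d\,\mu^2(e)$ is correct, and the hyperbola method applied to it will recover the main term and the constant $c$. But the error term it delivers is not $O(x^{2/3})$. In the range $d\le D$ the remainder from $\sum_{e\le x/d^2}\mu^2(e)=\tfrac{6}{\pi^2}\,x/d^2+O\bigl((x/d^2)^{1/2}\bigr)$ contributes
\[
\sum_{d\le D} d\cdot O\!\left(\frac{x^{1/2}}{d}\right)=O(Dx^{1/2}),
\]
while the complementary range contributes $O(x/D)$ (from the $O((x/e)^{1/2})$ error in $\sum_{d\le (x/e)^{1/2}} d$). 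Balancing these forces $D=x^{1/4}$ and yields only $O(x^{3/4})$; with your choice $D=x^{1/3}$ the first error is in fact $O(x^{5/6})$. The bottleneck is the $O(y^{1/2})$ remainder in the squarefree count, and that cannot be improved enough unconditionally to rescue the argument.

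The paper uses a different factorisation of the same Dirichlet series: it writes $N_1(0,r)=\sum_{a^2b^2c=r}\mu(a)\,b$, pulls the $\mu(a)$-sum to the outside, and first evaluates $E(y)=\sum_{b^2c\le y} b$ by the hyperbola method. Now the inner variable $c$ ranges over \emph{all} integers, so the inner count has error $O(1)$ rather than $O(y^{1/2})$; the split at $y^{1/3}$ then genuinely gives $E(y)=\tfrac12\, y\log y+\tfrac12(3\gamma-1)y+O(y^{2/3})$. The outer sum $\sum_{a}\mu(a)\,E(x/a^2)$ preserves $O(x^{2/3})$ because $\sum_a a^{-4/3}$ converges. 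Your argument can be repaired by expanding $\mu^2(e)=\sum_{f^2\mid e}\mu(f)$ and interchanging the order of summation, which lands you exactly on the paper's decomposition.
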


\begin{proof} By the identity \eqref{Dirichlet_8_1} we infer that for every $r\in \N$,
\begin{equation*}
N_1(0,r)=\sum_{a^2b^2c=r} \mu(a)b.
\end{equation*}

Using Dirichlet's hyperbola method we have
\begin{equation*}
E(x):= \sum_{b^2c\le x} b = \sum_{b\le x^{1/3}} b \sum_{c\le x/b^2} 1 + \sum_{c\le x^{1/3}} \sum_{b\le (x/c)^{1/2}} b - \sum_{b\le x^{1/3}} b
\sum_{c\le x^{1/3}} 1,
\end{equation*}
which gives by the trivial estimate (i.e., $|x- \lfloor x \rfloor|<1$),
\begin{equation*}
E(x)= \frac1{2}x\log x + \frac1{2}(3\gamma-1)x+ O(x^{2/3}).
\end{equation*}

Now,
\begin{equation*}
\sum_{r\le x} N_1(0,r)= \sum_{a\le x^{1/2}} \mu(a) E(x/a^2)
\end{equation*}
and easy computations complete the proof.
\end{proof}

\begin{remark} {\rm The error term of \eqref{error_2_3} can be improved by the method of exponential sums (see, e.g., \cite[Ch.\ 6]{Bor2012}).
Namely, it is $O(x^{2/3-\delta})$ for some explicit $\delta$ with $0<\delta <1/6$.}
\end{remark}

\subsection{The congruence \texorpdfstring{$x^2\equiv 1$}{x2=1} (mod \texorpdfstring{$r$}{r})}

It is also well known, that in the case $k=1$ and $n=1$ for the number of solutions of the congruence $x^2\equiv 1$ (mod $r$) one has
$N_1(1,p^{\nu})=2$ for every prime $p>2$ and every $\nu \in \N$, $N_1(1,2)=1$, $N_1(1,4)=2$, $N_1(1,2^{\nu})=4$ for every $\nu \ge 3$
(sequence A060594 in \cite{OEIS}). The Dirichlet series representation
\begin{equation} \label{Dirichlet_8_2}
\sum_{r=1}^{\infty} \frac{N_1(1,r)}{r^s}= \frac{\zeta^2(s)}{\zeta(2s)} \left(1-\frac1{2^s}+\frac{2}{2^{2s}}\right)
\end{equation}
shows that estimating the sum $\sum_{r\le x} N_1(1,r)$ is closely related to the squarefree divisor problem. Let
$\tau^{(2)}(n)=2^{\omega(n)}$ denote the number of squarefree divisors of $n$. Then
\begin{equation} \label{squarefree}
\sum_{n=1}^{\infty} \frac{\tau^{(2)}(n)}{n^s}= \frac{\zeta^2(s)}{\zeta(2s)}.
\end{equation}

By this analogy we deduce

\begin{prop} \label{prop_8_2}
\begin{equation*}
\sum_{r\le x} N_1(1,r)= \frac{6}{\pi^2} x\log x +c_1 x + O(x^{1/2}\exp(-c_0(\log x)^{3/5}(\log \log x)^{-1/5})),
\end{equation*}
where $c_0>0$ is a constant and $c_1=\frac{6}{\pi^2}\left(2\gamma-1-\frac{\log 2}{2}- \frac{2\zeta'(2)}{\zeta(2)}\right)$.
If the Riemann hypothesis (RH) is true, then the error term is $O(x^{4/11+\varepsilon})$ for every $\varepsilon >0$.
\end{prop}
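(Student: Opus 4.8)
The plan is to reduce the sum to the summatory function of $\tau^{(2)}(n)=2^{\omega(n)}$ --- that is, to the squarefree divisor problem --- by reading off an explicit convolution identity from \eqref{Dirichlet_8_2}. Put $h(s)=1-2^{-s}+2\cdot 4^{-s}$, so that \eqref{Dirichlet_8_2} together with \eqref{squarefree} says $\sum_{r\ge 1}N_1(1,r)r^{-s}=h(s)\sum_{n\ge 1}\tau^{(2)}(n)n^{-s}$; equivalently $N_1(1,\cdot)=g*\tau^{(2)}$, where $g$ is the arithmetical function supported on $\{1,2,4\}$ with $g(1)=1$, $g(2)=-1$, $g(4)=2$. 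Here $\sum_d g(d)/d=h(1)=1$ and $\sum_d g(d)(\log d)/d=-h'(1)=\tfrac12\log 2$. Summing the convolution gives
\[
\sum_{r\le x}N_1(1,r)=\sum_d g(d)\,M(x/d)=M(x)-M(x/2)+2M(x/4),\qquad M(y):=\sum_{n\le y}\tau^{(2)}(n).
\]

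Next I would record the asymptotics of $M(y)$. Since $\sum_n\tau^{(2)}(n)n^{-s}=\zeta(s)^2/\zeta(2s)$ has a double pole at $s=1$, computing its principal part (equivalently, applying Dirichlet's hyperbola method to $\tau^{(2)}=\1*\mu^2$, or to $\tau^{(2)}(n)=\sum_{d^2m=n}\mu(d)\tau(m)$) gives
\[
M(y)=\frac{y}{\zeta(2)}\left(\log y+2\gamma-1-\frac{2\zeta'(2)}{\zeta(2)}\right)+R(y),
\]
where $R(y)=O(y^{1/2}\log y)$ elementarily, and, by the known sharpenings of the squarefree divisor problem, $R(y)=O\!\left(y^{1/2}\exp(-c_0(\log y)^{3/5}(\log\log y)^{-1/5})\right)$ unconditionally --- the Vinogradov--Korobov zero-free-region saving, exactly as in the Walfisz estimate for $\sum_{n\le y}\mu^2(n)$ --- and $R(y)=O(y^{4/11+\varepsilon})$ under RH.

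Substituting this into the first display and collecting terms finishes the proof. Writing $\beta=2\gamma-1-2\zeta'(2)/\zeta(2)$ and $M(y/d)=\tfrac{y}{d\zeta(2)}(\log y-\log d+\beta)+R(y/d)$, one gets
\[
\sum_{r\le x}N_1(1,r)=\frac{x}{\zeta(2)}\left((\log x+\beta)\sum_d\frac{g(d)}{d}-\sum_d\frac{g(d)\log d}{d}\right)+\big(R(x)-R(x/2)+2R(x/4)\big),
\]
and the values $\sum_d g(d)/d=1$, $\sum_d g(d)(\log d)/d=\tfrac12\log 2$ turn the bracket into $\log x+\beta-\tfrac12\log 2$, while $R(x)-R(x/2)+2R(x/4)=O(R(x))$ in each case. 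Since $1/\zeta(2)=6/\pi^2$, the main term is $\tfrac{6}{\pi^2}x\log x$ and the coefficient of $x$ is $\tfrac{6}{\pi^2}\big(2\gamma-1-\tfrac{\log 2}{2}-\tfrac{2\zeta'(2)}{\zeta(2)}\big)=c_1$, as asserted.

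The only real obstacle is the bound on $R(y)$, i.e. the squarefree divisor problem itself: the elementary estimate merely gives $R(y)=O(y^{1/2}\log y)$, and obtaining the $\exp$-saving unconditional bound and the exponent $4/11$ under RH requires either invoking the literature on the summatory function of $2^{\omega(n)}$ or running Perron's formula for $\zeta(s)^2/\zeta(2s)$: shifting the contour past the double pole at $s=1$ and estimating the remaining integral via the zero-free region (respectively, under RH, via bounds for $1/\zeta(2s)$ valid for $\Re s$ down to near $1/4$). The convolution identity and the bookkeeping of the main terms are routine.
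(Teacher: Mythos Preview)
Your proof is correct and follows essentially the same route as the paper: you write $N_1(1,\cdot)=\tau^{(2)}*g$ with $g$ supported on $\{1,2,4\}$ (the paper's function $h$), reduce to the summatory function of $\tau^{(2)}$, and then import the known error terms for the squarefree divisor problem. The paper cites Suryanarayana--Siva Rama Prasad for the unconditional bound and Baker for the $4/11$ exponent under RH, which are exactly the inputs you describe informally.
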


\begin{proof} By the identities \eqref{Dirichlet_8_2} and \eqref{squarefree} it follows that for every $r\in \N$,
\begin{equation*}
N_1(1,r)=\sum_{ab=r} \tau^{(2)}(a)h(b),
\end{equation*}
where the multiplicative function $h$ is defined by
\begin{equation*}
h(p^\nu) = \begin{cases}
-1, & \text{ if \ $p=2$, $\nu=1$;} \\
2, & \text{ if \ $p=2$, $\nu=2$;} \\
0, & \text{ otherwise.}
\end{cases}
\end{equation*}

Now the convolution method and the result
\begin{equation*}
\sum_{n\le x} \tau^{(2)}(n)= \frac{6}{\pi^2}x \left( \log x + 2\gamma-1-\frac{2\zeta'(2)}{\zeta(2)}\right) + O(R(x)),
\end{equation*}
where $R(x)\ll x^{1/2}\exp(-c_0(\log x)^{3/5}(\log \log x)^{-1/5})$ (see \cite{SS1970}) conclude the proof. If RH is true, then
the estimate $R(x)\ll x^{4/11+\varepsilon}$ due to Baker \cite{Bak1996} can be used.
\end{proof}

\begin{remark} {\rm See \cite{FMS2010} for asymptotic formulas on the number of solutions of the higher degree congruences
$x^{\ell}\equiv 0$ (mod $n$) and $x^{\ell}\equiv 1$ (mod $n$), respectively, where $\ell \in \N$. The results of our Propositions
\ref{prop_8_1} and \ref{prop_8_2} are better than those of \cite{FMS2010} applied for $\ell =2$.}
\end{remark}

\subsection{The congruence \texorpdfstring{$x^2+y^2\equiv 0$}{x2+y2=0} (mod \texorpdfstring{$r$}{r})}

This is the case $k=2$, $n=0$. $N_2(0,r)$ is the sequence A086933 in \cite{OEIS} and for $r$ odd it is given by \eqref{N_2(0,r)_r_odd}.
Furthermore, $N_2(0,2^{\nu})$ is given by Proposition \ref{prop_6_5}. We deduce

\begin{cor} \label{cor_N_2(0)_prime} For every prime power $p^{\nu}$ ($\nu\in \N$),
\begin{equation*}
N_2(0,p^\nu) = \begin{cases}
p^{\nu} (\nu+1-\nu/p), & \text{ if \ $p\equiv 1$ {\rm (mod $4$)}, $\nu \ge 1$}; \\
p^{\nu}, & \text{ if \ $p\equiv -1$ {\rm (mod $4$)}, $\nu$ is even;} \\
p^{\nu-1}, & \text{ if \ $p\equiv -1$ {\rm (mod $4$)}, $\nu$ is odd;} \\
2^{\nu}, & \text{ if \ $p=2$, $\nu \ge 1$}.
\end{cases}
\end{equation*}
\end{cor}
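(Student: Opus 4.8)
The plan is to evaluate $N_2(0,p^\nu)$ separately for the two relevant cases: $p$ odd (where the formula \eqref{N_2(0,r)_r_odd} applies), and $p=2$ (where Proposition \ref{prop_6_5} with $m=0$ gives the answer immediately). For $p=2$ there is nothing to do: Proposition \ref{prop_6_5} states $N_2(0,2^\nu)=2^{\nu(4\cdot 0+1)}=2^\nu$ for every $\nu\in\N$, which is the last line of the claimed formula.

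For $p$ odd, I would start from \eqref{N_2(0,r)_r_odd}, namely
\begin{equation*}
N_2(0,r)= r \sum_{d\mid r} (-1)^{(d-1)/2}\,\frac{\varphi(d)}{d},
\end{equation*}
specialized to $r=p^\nu$. Then $N_2(0,p^\nu)=p^\nu\sum_{j=0}^{\nu}(-1)^{(p^j-1)/2}\varphi(p^j)/p^j$, where $\varphi(p^0)/p^0=1$ and $\varphi(p^j)/p^j=1-1/p$ for $j\ge 1$. The sign $(-1)^{(p^j-1)/2}$ equals $\left(\tfrac{-1}{p^j}\right)$, which is $1$ for all $j$ when $p\equiv 1\pmod 4$, and equals $(-1)^j$ when $p\equiv -1\pmod 4$ (since $p^j\equiv(-1)^j\pmod 4$). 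So the sum splits into two subcases. If $p\equiv 1\pmod 4$, the inner sum is $1+\nu(1-1/p)=\nu+1-\nu/p$, giving $p^\nu(\nu+1-\nu/p)$. If $p\equiv -1\pmod 4$, the inner sum is $1+(1-1/p)\sum_{j=1}^{\nu}(-1)^j$, and I would evaluate the alternating sum $\sum_{j=1}^\nu(-1)^j$, which is $0$ for $\nu$ even and $-1$ for $\nu$ odd; thus the inner sum is $1$ for $\nu$ even and $1-(1-1/p)=1/p$ for $\nu$ odd, yielding $p^\nu$ and $p^{\nu-1}$ respectively. This matches all three odd-prime lines.

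This argument is entirely routine; there is no real obstacle, only bookkeeping with the sign $(-1)^{(p^j-1)/2}$ and the telescoping/alternating inner sum. The one point worth stating carefully is the reduction of $(-1)^{(p^j-1)/2}$ to a function of $p\bmod 4$ and the parity of $j$, which follows from $\left(\tfrac{-1}{p^j}\right)=\left(\tfrac{-1}{p}\right)^j$ together with $\left(\tfrac{-1}{p}\right)=(-1)^{(p-1)/2}$. With that in hand the corollary is an immediate specialization of \eqref{N_2(0,r)_r_odd} and Proposition \ref{prop_6_5}, so the write-up can be kept to a few lines.
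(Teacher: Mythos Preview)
Your proposal is correct and follows exactly the approach indicated in the paper: the text preceding the corollary explicitly says $N_2(0,r)$ for $r$ odd is given by \eqref{N_2(0,r)_r_odd} and $N_2(0,2^{\nu})$ by Proposition~\ref{prop_6_5}, and then simply writes ``We deduce'' before stating the corollary. Your computation just fills in the routine bookkeeping the paper leaves implicit.
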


\begin{cor} \label{cor_N_2_convo} $N_2(0,\DOT)= \id \cdot (\1*\chi)*\mu \chi$, where $\chi$ is the nonprincipal character {\rm (mod $4$)}.
\end{cor}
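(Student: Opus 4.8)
The plan is to reduce the identity to a verification at prime powers by multiplicativity. Here ``$\cdot$'' denotes the pointwise product of arithmetic functions and the right-hand side is read as $\bigl(\id\cdot(\1*\chi)\bigr)*(\mu\chi)$. The function $r\mapsto N_2(0,r)$ is multiplicative, as recorded in the Introduction for $N_k(n,\DOT)$ in general. On the other side, $\1*\chi$ is a Dirichlet convolution of the multiplicative functions $\1$ and $\chi$, hence multiplicative; its pointwise product with $\id$ is multiplicative; $\mu\chi$ is multiplicative; and a Dirichlet convolution of multiplicative functions is again multiplicative. So it suffices to check that for every prime $p$ and every $\nu\in\N_0$,
\begin{equation*}
N_2(0,p^{\nu})=\sum_{j=0}^{\nu} p^{j}\,(\1*\chi)(p^{j})\,(\mu\chi)(p^{\nu-j}).
\end{equation*}

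Since $(\mu\chi)(p^{\nu-j})$ vanishes for $\nu-j\ge2$, the right-hand side is $1=N_2(0,1)$ when $\nu=0$, and when $\nu\ge1$ only the terms $j=\nu$ (with $(\mu\chi)(1)=1$) and $j=\nu-1$ (with $(\mu\chi)(p)=-\chi(p)$) survive. Using $(\1*\chi)(p^{j})=\sum_{i=0}^{j}\chi(p)^{i}$ --- which equals $j+1$ if $\chi(p)=1$, equals $1$ for even $j$ and $0$ for odd $j$ if $\chi(p)=-1$, and equals $1$ for all $j$ if $\chi(p)=0$, that is $p=2$ --- the right-hand side simplifies to $p^{\nu}(\nu+1)-\nu p^{\nu-1}=p^{\nu}(\nu+1-\nu/p)$ when $p\equiv1\pmod4$; to $p^{\nu}$ when $p\equiv-1\pmod4$ with $\nu$ even and to $p^{\nu-1}$ when $p\equiv-1\pmod4$ with $\nu$ odd; and to $2^{\nu}$ when $p=2$. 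In each case this is exactly the value of $N_2(0,p^{\nu})$ in Corollary \ref{cor_N_2(0)_prime}, which proves the identity.

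The computations are elementary, so there is no real obstacle; the only care needed is to keep the three cases $\chi(p)\in\{1,-1,0\}$ apart and to note that the $j=\nu-1$ term is absent for $\nu=0$. An equivalent route avoiding the prime-power bookkeeping is to compare Dirichlet series: Corollary \ref{cor_N_2(0)_prime} gives the Euler factor of $\sum_{r\ge1}N_2(0,r)r^{-s}$ as $(1-p^{-s})(1-p^{1-s})^{-2}$ for $p\equiv1\pmod4$, as $(1+p^{-s})(1-p^{2-2s})^{-1}$ for $p\equiv-1\pmod4$, and as $(1-2^{1-s})^{-1}$ for $p=2$, and these coincide factor by factor with the products of the Euler factors of $\sum_{n\ge1}n\,(\1*\chi)(n)\,n^{-s}$ and of $\sum_{n\ge1}\mu(n)\chi(n)\,n^{-s}=\prod_{p}\bigl(1-\chi(p)p^{-s}\bigr)$, which gives the same conclusion.
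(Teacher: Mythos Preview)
Your proof is correct. Your primary argument verifies the convolution identity directly at prime powers using Corollary~\ref{cor_N_2(0)_prime}, whereas the paper first assembles the Dirichlet series $\sum_r N_2(0,r)r^{-s}=\zeta(s-1)L(s-1,\chi)L(s,\chi)^{-1}$ from that same corollary and reads off the convolution. These are two presentations of the same computation: matching Euler factors is exactly the prime-power check you carry out, and indeed the ``equivalent route'' you sketch at the end is the paper's proof verbatim. Your direct verification is perhaps a touch more elementary since it avoids naming the $L$-series, while the paper's formulation makes the analytic structure (useful for the asymptotic that follows) immediately visible.
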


\begin{proof} From Corollary \ref{cor_N_2(0)_prime} we obtain the Dirichlet series representation
\begin{equation*}
\sum_{r=1}^{\infty} \frac{N_2(0,r)}{r^s} = \zeta(s-1) \prod_{p>2} \left(1-\frac{(-1)^{(p-1)/2}}{p^s} \right)
\left(1-\frac{(-1)^{(p-1)/2}}{p^{s-1}} \right)^{-1}
\end{equation*}
\begin{equation*}
= \zeta(s-1) L(s-1,\chi) L(s,\chi)^{-1},
\end{equation*}
where $L(s,\chi)$ is the Dirichlet series of $\chi$. This gives the result.
\end{proof}

Observe that $4(\1*\chi)(n)= r_2(n)$ is the number of ways $n$ can be written as a sum of two squares, quoted in the Introduction.
This shows that the sum $\sum_{r\le x} N_2(0,r)$ is closely related to the Gauss circle problem.
The next result corresponds to the asymptotic formula due to Huxley \cite{Hux2003}
\begin{equation} \label{Hux}
\sum_{n\le x} r_2(n)= \pi x + O(x^a (\log x)^b),
\end{equation}
where $a=131/416\doteq 0.314903$ and $b=26947/8320$.

\begin{prop} We have
\begin{equation*}
\sum_{r\le x} N_2(0,r)= \frac{\pi}{8G} x^2 +O(x^{a+1} (\log x)^b),
\end{equation*}
where $G$ is the Catalan constant defined by \eqref{Catalan_const}.
\end{prop}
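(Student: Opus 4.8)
The plan is to use the Dirichlet convolution identity obtained in Corollary~\ref{cor_N_2_convo}, namely $N_2(0,\DOT)= \id \cdot (\1*\chi)*\mu\chi$, and combine it with Huxley's estimate \eqref{Hux} via a standard convolution (hyperbola-free) argument, since the ``main'' factor $\id\cdot(\1*\chi)$ already has a well-understood summatory function while the ``perturbing'' factor $\mu\chi$ is small and sums to a convergent series. First I would write, for each $r\in\N$,
\[
N_2(0,r)=\sum_{de=r} d\,(\1*\chi)(d)\,(\mu\chi)(e),
\]
so that
\[
\sum_{r\le x} N_2(0,r)=\sum_{e\le x}(\mu\chi)(e)\sum_{d\le x/e} d\,(\1*\chi)(d).
\]
The inner sum is $\tfrac14\sum_{d\le y} d\, r_2(d)$ with $y=x/e$, which by partial summation from \eqref{Hux} (Abel summation against the weight $d$) equals $\tfrac{\pi}{8}y^2 + O\!\left(y^{a+1}(\log y)^b\right)$; here one uses that $\sum_{n\le y} r_2(n)=\pi y + O(y^a(\log y)^b)$ and that $a<1$, so the error term of the weighted sum is genuinely $O(y^{a+1}(\log y)^b)$ and dominates the contribution coming from the error in \eqref{Hux} after multiplication by $y$.

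Next I would substitute this back and split the outer sum. The main term contributes
\[
\frac{\pi}{8}\,x^2\sum_{e\le x}\frac{(\mu\chi)(e)}{e^2},
\]
and since $\sum_{e=1}^{\infty}(\mu\chi)(e)e^{-2}=\prod_{p>2}\bigl(1-\chi(p)p^{-2}\bigr)=L(2,\chi)^{-1}=1/G$ converges absolutely, the tail $\sum_{e>x}(\mu\chi)(e)e^{-2}$ is $O(1/x)$, contributing an error $O(x)$. This yields the leading term $\tfrac{\pi}{8G}x^2$. The error term from the inner estimate contributes
\[
\sum_{e\le x} O\!\left((x/e)^{a+1}(\log(x/e))^b\right)\ll x^{a+1}(\log x)^b\sum_{e\le x}\frac{1}{e^{a+1}}\ll x^{a+1}(\log x)^b,
\]
because $a+1>1$ makes $\sum_e e^{-(a+1)}$ convergent. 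Since $a+1>1>1$ dominates the $O(x)$ term from the tail of the Dirichlet series (as $a\doteq 0.3149$, indeed $a+1\doteq 1.3149>1$), collecting everything gives exactly
\[
\sum_{r\le x} N_2(0,r)=\frac{\pi}{8G}x^2+O\!\left(x^{a+1}(\log x)^b\right).
\]

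The main obstacle, such as it is, will be the bookkeeping in passing from the unweighted estimate \eqref{Hux} to the $d$-weighted sum $\sum_{d\le y} d\, r_2(d)$ via Abel summation and confirming that the resulting error exponent is precisely $a+1$ (not $a+1+\varepsilon$ or something weaker); this just requires writing $\sum_{d\le y} d\,r_2(d)=y\bigl(\pi y+E(y)\bigr)-\int_1^y\bigl(\pi t+E(t)\bigr)\,dt$ with $E(t)\ll t^a(\log t)^b$ and noting $\int_1^y t^a(\log t)^b\,dt\ll y^{a+1}(\log y)^b$. Everything else is the routine convolution argument already used in the proofs of Propositions~\ref{prop_8_1} and \ref{prop_8_2}, together with the product formula for $G$ in \eqref{Catalan_const} to identify the constant.
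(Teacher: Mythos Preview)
Your proposal is correct and follows exactly the same approach as the paper: both use the convolution identity $N_2(0,\DOT)=(\id\cdot r_2/4)*(\mu\chi)$ from Corollary~\ref{cor_N_2_convo}, apply Abel summation to Huxley's estimate \eqref{Hux} for the inner sum $\sum_{n\le x/d} n\,r_2(n)$, and then sum over $d$ weighted by $\mu(d)\chi(d)$ to extract the constant $1/G=L(2,\chi)^{-1}$. The paper merely sketches this in two lines (``partial summation on \eqref{Hux} and usual estimates''), whereas you have spelled out the bookkeeping; there is no substantive difference.
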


\begin{proof} Since $N_2(0,\DOT)=(\id \cdot \, r_2/4) * (\mu \chi)$, we have
\begin{equation*}
\sum_{r\le x} N_2(0,r)= \frac1{4} \sum_{d\le x} \mu(d)\chi(d) \sum_{n\le x/d} n r_2(n).
\end{equation*}

Now partial summation on \eqref{Hux} and usual estimates give the result.
\end{proof}

\subsection{The congruence \texorpdfstring{$x^2+y^2\equiv 1$}{x2+y2=1} (mod \texorpdfstring{$r$}{r})}

This is the case $k=2$, $n=1$. $N_2(1,r)$ is sequence A060968 in \cite{OEIS}. For every $r$ odd we have by \eqref{eq_4m+2},
\begin{equation*}
N_2(1,r)= r \sum_{d\mid r} (-1)^{(d-1)/2}\ \frac{\mu(d)}{d},
\end{equation*}
and deduce (cf. Corollary \ref{Cor_6_3}).

\begin{cor} For every prime power $p^{\nu}$ ($\nu\in \N$),
\begin{equation*}
N_2(1,p^\nu) = \begin{cases}
p^{\nu} (1-1/p), & \text{ if \ $p \equiv 1$ {\rm (mod $4$)}, $\nu \ge 1$}; \\
p^{\nu} (1+1/p), & \text{ if \ $p \equiv -1$ {\rm (mod $4$)}, $\nu \ge 1$}; \\
2, & \text{ if \ $p=2$, $\nu=1$}; \\
2^{\nu+1}, & \text{ if \ $p=2$, $\nu \ge 2$}.
\end{cases}
\end{equation*}
\end{cor}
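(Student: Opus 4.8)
The plan is to compute $N_2(1,p^\nu)$ for each prime power by combining the two ingredients already established in the paper: the compact Ramanujan-sum formula for odd $r$, and the explicit $2$-power evaluation from Corollary \ref{Cor_6_3}. Since $r\mapsto N_2(1,r)$ is multiplicative, it suffices to treat prime powers, and the case $p=2$ is immediate: Corollary \ref{Cor_6_3} with $k=2$ (so $m=0$) and $n=1=2t+1$ with $t=0$ gives $N_2(1,2)=2^{4\cdot 0+1}=2$, and for $\nu\ge 2$, $N_2(1,2^\nu)=2^{\nu}\bigl(1+(-1)^{0}/2^{0}\bigr)=2^{\nu}\cdot 2=2^{\nu+1}$, which are exactly the last two cases of the statement.

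For odd $p$ I would start from the displayed formula $N_2(1,r)= r\sum_{d\mid r}(-1)^{(d-1)/2}\mu(d)/d$, valid for all odd $r$, which is the $k=2$, $n=1$, $\a=(1,1)$ instance of \eqref{eq_4m+2}. Taking $r=p^\nu$ with $p$ odd, the only squarefree divisors are $d=1$ and $d=p$, so the sum collapses to
\begin{equation*}
N_2(1,p^\nu)= p^\nu\left(1 - (-1)^{(p-1)/2}\frac1p\right).
\end{equation*}
Then I distinguish the two residue classes mod $4$: if $p\equiv 1\pmod 4$ then $(-1)^{(p-1)/2}=1$ and the bracket is $1-1/p$; if $p\equiv -1\pmod 4$ then $(-1)^{(p-1)/2}=-1$ and the bracket is $1+1/p$. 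This yields the first two cases.

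There is essentially no obstacle here — the result is a direct specialization of earlier formulas, and the only mild care needed is to make sure the $2$-power case is read off from Corollary \ref{Cor_6_3} with the correct parameters ($m=0$, $n$ of the form $2t+1$ with $t=0$), and to note that $N_2(1,2)$ must be handled separately from the $\nu\ge 2$ formula since Corollary \ref{Cor_6_3} already separates $\nu=1$. Finally, multiplicativity of $r\mapsto N_2(1,r)$ (noted in the Introduction for $N_k(n,r)$) assembles these prime-power values into the general statement, completing the proof.
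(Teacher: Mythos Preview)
Your proof is correct and follows exactly the approach the paper intends: it specializes the odd-$r$ formula $N_2(1,r)=r\sum_{d\mid r}(-1)^{(d-1)/2}\mu(d)/d$ (from \eqref{eq_4m+2} with $c_d(1)=\mu(d)$) to $r=p^\nu$, and reads off the $p=2$ case from Corollary~\ref{Cor_6_3} with $m=0$, $t=0$. The final remark about multiplicativity is superfluous, since the statement concerns only prime powers and there is nothing to assemble.
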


\begin{prop} We have
\begin{equation*}
\sum_{r\le x} N_2(1,r)= \frac{5}{8G} x^2 +O(x\log x).
\end{equation*}
\end{prop}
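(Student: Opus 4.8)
The plan is to mimic the structure of the two preceding propositions (the $N_1(1,r)$ and $N_2(0,r)$ cases): first obtain a clean Dirichlet series / convolution identity for $r\mapsto N_2(1,r)$, then peel off the ``circle-method'' factor $r_2$ via Dirichlet convolution and feed in a known asymptotic formula for $\sum_{n\le x} n r_2(n)$, and finally do a routine summation over the remaining (absolutely convergent) Dirichlet series to pin down the leading constant. The main point is that $N_2(1,\cdot)$ should factor as $(\id\cdot r_2/4)$ convolved with a nice multiplicative ``correction'' function $g$ whose Dirichlet series converges in a half-plane strictly to the left of the pole of $\sum n r_2(n) n^{-s}$, so that $\sum_{r\le x}N_2(1,r)$ inherits the main term $\tfrac12 \pi \cdot(\text{const})\, x^2$ with an acceptable error.

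Concretely, first I would compute the Dirichlet series $\sum_{r\ge 1} N_2(1,r) r^{-s}$ from the Corollary just stated giving $N_2(1,p^\nu)$ at each prime power. For odd $p$ the local factor is a geometric-type series in $p^{1-s}$ with an extra $(1\mp 1/p)$, so it should collapse to $\zeta(s-1)$ times $L(s-1,\chi)/L(s,\chi)$ type factors, exactly paralleling Corollary \ref{cor_N_2_convo}; the $p=2$ factor ($N_2(1,2)=2$, $N_2(1,2^\nu)=2^{\nu+1}$ for $\nu\ge 2$) contributes a single explicit rational-function-in-$2^{-s}$ correction. Comparing with $\sum_n (n r_2(n)/4) n^{-s} = \zeta(s-1)L(s-1,\chi)$ (since $r_2 = 4(\1*\chi)$), I would read off that $N_2(1,\cdot) = (\id\cdot r_2/4)*g$ where $g$ is the multiplicative function with Dirichlet series $L(s,\chi)^{-1}$ times the explicit $2$-adic factor; in particular $g = \mu\chi * (\text{something supported on powers of }2)$, so $\sum_n |g(n)| n^{-\sigma}$ converges for $\sigma > 1$ (actually the relevant abscissa is where it beats $x^2/\,\cdot$, i.e. we need convergence at $\sigma$ slightly above... the series $\sum g(n)/n^s$ has abscissa of convergence $\le 1$, comfortably to the left of $s=2$).

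Then, exactly as in the $N_2(0,r)$ proof, write
\begin{equation*}
\sum_{r\le x} N_2(1,r) = \sum_{d\le x} g(d) \sum_{n\le x/d} \frac{n r_2(n)}{4},
\end{equation*}
apply partial summation to Huxley's estimate \eqref{Hux} to get $\sum_{n\le y} n r_2(n) = \tfrac{\pi}{2} y^2 + O(y^{a+1}(\log y)^b)$, substitute, and sum the resulting series: the main term becomes $\tfrac{\pi}{8} x^2 \sum_{d\ge 1} g(d)/d^2 = \tfrac{\pi}{8} x^2 \cdot \frac{1}{L(2,\chi)}\cdot(\text{$2$-adic factor at }s=2)$, while the tail $\sum_{d>x} g(d)/d^2$ and the error terms $\sum_d |g(d)| (x/d)^{a+1}(\log(x/d))^b$ are both $o(x^2)$ — in fact $O(x\log x)$ after a short computation, since the power-of-$2$ part of $g$ grows like $2^\nu$ which exactly matches the $\id$ in $\id\cdot r_2/4$ being ``used up,'' giving an extra $\log$ rather than a second power. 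The constant $\sum g(d)/d^2$ must work out to $5/(4\cdot 2 G) = 5/(8G)$; since $L(2,\chi)=G$ and the odd part already gives $1/G$, the $2$-adic correction at $s=2$ must equal $5/4$, which I would verify directly from $N_2(1,2^\nu)$: the local factor is $1 + 2\cdot 2^{-2s} \sum_{\nu\ge 0} 2^{(1-s)\nu}\cdot$ (adjusted for the anomalous $\nu=1$ term), evaluated at $s=2$.

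The main obstacle I anticipate is bookkeeping the $2$-adic local factor correctly and confirming it contributes precisely the rational constant $5/4$ at $s=2$ (and that its contribution to the error is only $O(x\log x)$, not larger) — everything else is a faithful repetition of the $N_2(0,r)$ argument. There is also a mild subtlety in that the leading term of $\sum n r_2(n)$ is a pure $x^2$ with \emph{no} $x^2\log x$ term (unlike the $k=1$ cases), so the final answer has no logarithm; one should double-check that the convolution with $g$ does not reintroduce a $\log$ in the main term, which it will not since $\sum g(d)/d^2$ converges.
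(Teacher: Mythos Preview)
Your plan goes astray at the Dirichlet-series step. If you actually compute the local factors from the Corollary giving $N_2(1,p^\nu)$, you find for odd $p$ that
\[
\sum_{\nu\ge 0}\frac{N_2(1,p^\nu)}{p^{\nu s}}=\frac{1-\chi(p)p^{-s}}{1-p^{1-s}},
\]
and the $p=2$ factor is $(1+4\cdot 2^{-2s})/(1-2^{1-s})$. Hence
\[
\sum_{r\ge 1}\frac{N_2(1,r)}{r^s}=\zeta(s-1)\Bigl(1+\frac{4}{2^{2s}}\Bigr)L(s,\chi)^{-1},
\]
with \emph{no} factor $L(s-1,\chi)$. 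So the parallel with Corollary~\ref{cor_N_2_convo} breaks: $N_2(1,\cdot)$ is not $(\id\cdot r_2/4)*g$ with $g$ having series $L(s,\chi)^{-1}\times(\text{$2$-adic})$. To force the $r_2$ factorization you would need $g$ to carry an extra $L(s-1,\chi)^{-1}$, i.e.\ a factor $\mu\chi\cdot\id$; then $|g(d)|$ is of size $d$ on squarefree $d$, $\sum_d|g(d)|/d^2$ diverges, and the Huxley error $\sum_d|g(d)|(x/d)^{a+1}$ is not under control without exploiting cancellation. Your check ``the $2$-adic correction at $s=2$ must equal $5/4$'' in fact fails with your $g$ (you would get $5/\pi$, not a rational), which is a signal that the decomposition is wrong.

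The paper's route is the obvious one once you have the correct series: write $N_2(1,\cdot)=\id * h$ where $h$ has Dirichlet series $(1+4\cdot 2^{-2s})L(s,\chi)^{-1}$. Since $h=(\mu\chi)*(\text{function supported on }\{1,4\})$ is \emph{bounded}, one gets directly
\[
\sum_{r\le x}N_2(1,r)=\sum_{d\le x}h(d)\Bigl(\tfrac12(x/d)^2+O(x/d)\Bigr)
=\tfrac12 x^2\cdot\frac{1+4/16}{L(2,\chi)}+O(x\log x)=\frac{5}{8G}x^2+O(x\log x).
\]
No circle problem, no Huxley, no partial summation --- just $\sum_{m\le y} m = y^2/2 + O(y)$. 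The detour through $r_2$ is both incorrect as you stated it and unnecessary.
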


\begin{proof} One has the Dirichlet series representation
\begin{equation*}
\sum_{r=1}^{\infty} \frac{N_2(1,r)}{r^s} =
\zeta(s-1)\left(1+\frac4{2^{2s}}\right) L(s,\chi)^{-1},
\end{equation*}
and the asymptotic formula is obtained by usual elementary arguments.
\end{proof}

\subsection{The congruence \texorpdfstring{$x^2+y^2+z^2\equiv 0$}{x2+y2+z2=0} (mod \texorpdfstring{$r$}{r})}

This is the case $k=3$, $n=0$. $N_3(0,r)$ is the sequence A087687 in
\cite{OEIS}. By Proposition \ref{Prop_5_7} we have for every $r\in \N$ odd,
\begin{equation} \label{N_3(0,r)}
N_3(0,r) = r^2 \sum_{d^2\mid r} \frac{\varphi(d)}{d^2}
\end{equation}
and using also Proposition \ref{Prop_7_1} we deduce

\begin{cor} For every prime power $p^{\nu}$ ($\nu\in \N$),
\begin{equation*}
N_3(0,p^\nu) = \begin{cases}
p^{3\beta-1}(p^{\beta+1}+p^{\beta}-1), & \text{ if \ $p>2$, $\nu=2\beta$ is even;} \\
p^{3\beta-2}(p^{\beta}+p^{\beta-1}-1), & \text{ if \ $p>2$, $\nu=2\beta-1$ is odd;} \\
2^{3\beta}, & \text{ if \ $p=2$, $\nu=2\beta$ is even;} \\
2^{3\beta-1}, & \text{ if \ $p=2$, $\nu=2\beta-1$ is odd.}
\end{cases}
\end{equation*}
\end{cor}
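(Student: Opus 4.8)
The plan is to derive the formula for $N_3(0,p^\nu)$ by combining the two ingredients that the excerpt already supplies: equation \eqref{N_3(0,r)}, namely $N_3(0,r)=r^2\sum_{d^2\mid r}\varphi(d)/d^2$ for odd $r$, which handles the primes $p>2$, and Proposition \ref{Prop_7_1} with $k=3$, which handles $p=2$. Since $r\mapsto N_3(0,r)$ is multiplicative (as noted in the Introduction), it suffices to evaluate both expressions at prime powers, and the four cases in the statement are exactly the four parities of $\nu$ crossed with the dichotomy $p>2$ versus $p=2$.

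First I would treat $p>2$. Writing $\nu=2\beta$ or $\nu=2\beta-1$, the condition $d^2\mid p^\nu$ means $d$ ranges over $1,p,\ldots,p^{\lfloor\nu/2\rfloor}$, and $\lfloor\nu/2\rfloor=\beta$ in the even case and $\beta-1$ in the odd case. Then $\sum_{j=0}^{B}\varphi(p^j)/p^{2j}=1+\sum_{j=1}^{B}(p^j-p^{j-1})/p^{2j}=1+(1-1/p)\sum_{j=1}^{B}p^{-j}$, which is an elementary finite geometric sum. Multiplying by $r^2=p^{2\nu}$ and simplifying the resulting rational expression gives $p^{3\beta-1}(p^{\beta+1}+p^\beta-1)$ when $\nu=2\beta$ and $p^{3\beta-2}(p^\beta+p^{\beta-1}-1)$ when $\nu=2\beta-1$; this is just bookkeeping with powers of $p$.

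For $p=2$ I would specialize Proposition \ref{Prop_7_1} at $k=3$. There $(k^2-1)/8=1$, so $(-1)^{(k^2-1)/8}=-1$, and $(k-2)\lfloor\nu/2\rfloor=\lfloor\nu/2\rfloor$ while $(k-3)/2=0$ and $2^{k-2}-1=1$; hence the bracket collapses to $1-(2^{\lfloor\nu/2\rfloor}-1)/2^{\lfloor\nu/2\rfloor}=2^{-\lfloor\nu/2\rfloor}$, and $N_3(0,2^\nu)=2^{2\nu}\cdot 2^{-\lfloor\nu/2\rfloor}=2^{2\nu-\lfloor\nu/2\rfloor}$. With $\nu=2\beta$ this is $2^{4\beta-\beta}=2^{3\beta}$, and with $\nu=2\beta-1$ it is $2^{4\beta-2-(\beta-1)}=2^{3\beta-1}$, matching the last two lines of the statement.

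I do not anticipate a genuine obstacle: both constituent results are already proved in the excerpt, the multiplicativity reduces everything to prime powers, and what remains is the evaluation of a finite geometric series together with the substitution $k=3$ in a formula whose exponents must be chased carefully. The only place to be slightly careful is the indexing of $\lfloor\nu/2\rfloor$ in terms of $\beta$ and making sure the $p=2$ formula from Proposition \ref{Prop_7_1} is interpreted correctly when $\lfloor\nu/2\rfloor=0$ (i.e.\ $\nu=1$), where the bracket is $1$ and $N_3(0,2)=4=2^{3\cdot1-1}$ consistently with the odd case $\beta=1$.
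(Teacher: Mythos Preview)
Your proposal is correct and follows exactly the approach indicated in the paper: for $p>2$ you evaluate the sum in \eqref{N_3(0,r)} at a prime power via a finite geometric series, and for $p=2$ you specialize Proposition \ref{Prop_7_1} to $k=3$. The computations you outline (including the check at $\nu=1$) are accurate.
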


\begin{prop}
\begin{equation*}
\sum_{r\le x} N_3(0,r)= \frac{24\zeta(3)}{\pi^4} x^3 +O(x^2\log x).
\end{equation*}
\end{prop}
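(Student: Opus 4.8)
The plan is to imitate the method used for the earlier prime-power asymptotics in this section. First I would establish a Dirichlet series representation for $r\mapsto N_3(0,r)$ from the prime-power values given in the corollary just above. For $p>2$ one has $N_3(0,p^\nu)=p^{2\nu}\sum_{d^2\mid p^\nu}\varphi(d)/d^2$ by \eqref{N_3(0,r)}, and the generating Euler factor at such $p$ works out (after summing the geometric-type series in $\nu$) to $\left(1-p^{1-s}\right)^{-1}\left(1-p^{-2s}\right)\left(1-p^{2-2s}\right)^{-1}$, while the factor at $p=2$ is $\left(1-2^{1-s}\right)^{-1}$ (since $N_3(0,2^\nu)=2^{\lfloor 3\nu/2\rfloor}$ in the notation above, i.e. $2^{3\beta}$ and $2^{3\beta-1}$). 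Assembling these yields
\begin{equation*}
\sum_{r=1}^{\infty}\frac{N_3(0,r)}{r^s}=\frac{\zeta(s-1)\,\zeta(2s-2)}{\zeta(2s)},
\end{equation*}
at least up to the discrepancy in the $2$-factor, which I would absorb into a bounded correction series $\sum_n g(n)/n^s$ with $g$ supported on powers of $2$; this does not affect the main term and is dominated within the error term.

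Next I would read off the convolution identity $N_3(0,\DOT)=\id\,\cdot\,(\1*h)*g$ or more directly write $N_3(0,r)=\sum_{d}\mu(d)\,m(r/d)$ where $\sum m(n)n^{-s}=\zeta(s-1)\zeta(2s-2)=\sum_n \left(\sum_{b^2\mid n}\varphi(b)\,(n/b^2)\right)n^{-s}$ — the latter is exactly the unsigned part of \eqref{N_3(0,r)} summed over all $r$. Then I would estimate $M(x):=\sum_{n\le x}m(n)$ by splitting $n=b^2c$: $M(x)=\sum_{b\le x^{1/2}}\varphi(b)\sum_{c\le x/b^2}c=\sum_{b\le x^{1/2}}\varphi(b)\left(\tfrac12 (x/b^2)^2+O(x/b^2)\right)$, and using $\sum_{b\ge1}\varphi(b)/b^4=\zeta(3)/\zeta(4)=90\zeta(3)/\pi^4$ together with $\sum_{b\le x^{1/2}}\varphi(b)/b=O(x^{1/2})$ to get $M(x)=\tfrac12\cdot\tfrac{90\zeta(3)}{\pi^4}x^2+O(x^{3/2}\log x)=\tfrac{45\zeta(3)}{\pi^4}x^2+O(x^{3/2}\log x)$. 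Wait — I should track the power: the sum is over $r\le x$ of a quantity that is $\asymp r^2$, so $M(x)$ should be of size $x^3$; redoing, $m$ has $m(b^2c)$ including the factor $r=b^2c$ from $\id$, so $M(x)=\sum_{b\le x^{1/3}}\varphi(b)\sum_{b^2 c\le x} b^2 c \asymp x^3$, and the constant comes out to $\tfrac{1}{3}\cdot\tfrac{90\zeta(3)}{\pi^4}=\tfrac{30\zeta(3)}{\pi^4}$ before the M\"obius twist. I would then recover $\sum_{r\le x}N_3(0,r)=\sum_{a\le x^{1/2}}\mu(a)M(x/a^2)$, and summing $\sum_a\mu(a)/a^6=1/\zeta(6)=\tfrac{945}{\pi^6}$ against the leading term, with careful bookkeeping of the numerical constants, produces the stated $\tfrac{24\zeta(3)}{\pi^4}x^3$; the error terms combine to $O(x^2\log x)$.

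The routine but error-prone part — and the main obstacle — is getting the leading constant exactly right: one must correctly match the Dirichlet series $\zeta(s-1)\zeta(2s-2)/\zeta(2s)$, correctly handle the anomalous Euler factor at $p=2$ (the naive formula \eqref{N_3(0,r)} is valid only for odd $r$), and then carry the constants $\zeta$-values through the double convolution (one factor from the "$\id$", one from "$\zeta(2s-2)$", one M\"obius inversion from "$1/\zeta(2s)$") without dropping a factor of $2$ or a power of $\pi$. I expect the cleanest route is to write everything as $N_3(0,\DOT)=f*\mu_2$ where $f(r)=r^2\sum_{d^2\mid r}\varphi(d)/d^2$ for odd $r$ with the $p=2$ factor corrected separately, estimate $\sum_{r\le x}f(r)$ by the $b^2c$-substitution, and then apply $\sum_{a}\mu(a)a^{-2}\chi_{\text{odd}}(a)$-type sums; all remaining steps are the "usual elementary arguments" invoked for the neighbouring propositions, and the error term $O(x^2\log x)$ follows from the trivial bounds on the divisor-type sums exactly as in the proof of Proposition \ref{prop_8_1}.
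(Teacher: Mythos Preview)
Your Dirichlet series is wrong, and this error propagates through the whole attempt. From \eqref{N_3(0,r)} one has, for $r$ odd, $N_3(0,r)=r^2\sum_{d^2\mid r}\varphi(d)/d^2$; writing $r=d^2e$ this gives the (odd-part) Euler product
\[
\sum_{r\ \text{odd}}\frac{N_3(0,r)}{r^s}=\zeta(s-2)\,\frac{\zeta(2s-3)}{\zeta(2s-2)}\quad(\text{odd Euler factors}),
\]
so the dominant zeta factor is $\zeta(s-2)$, not $\zeta(s-1)$. (A quick sanity check: $N_3(0,p)=p^2$, so $N_3(0,r)\asymp r^2$.) Your claimed Euler factor $(1-p^{1-s})^{-1}(1-p^{-2s})(1-p^{2-2s})^{-1}$ therefore cannot be right; the correct one is $(1-p^{2-s})^{-1}(1-p^{3-2s})^{-1}(1-p^{2-2s})$. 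You notice the growth mismatch midway (``Wait --- I should track the power'') but the on-the-fly fix---reinserting an extra factor of $r$ into $m$, then invoking a M\"obius sum $\sum_a\mu(a)/a^6=1/\zeta(6)$ with no visible origin---does not repair the decomposition; it just replaces one incorrect convolution by another.

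The paper's route avoids all of this. It writes $N_3(0,\DOT)=\id_2*g$ where $g$ has Dirichlet series $G(s)=\dfrac{\zeta(2s-3)}{\zeta(2s-2)}\cdot\dfrac{2^{2s}-16}{2^{2s}-4}$ (the last factor is the honest $p=2$ correction), so that $\sum_{r\le x}N_3(0,r)=\sum_{d\le x}g(d)\bigl(\tfrac{1}{3}(x/d)^3+O((x/d)^2)\bigr)$. The main term is $\tfrac{x^3}{3}G(3)=\tfrac{x^3}{3}\cdot\tfrac{\zeta(3)}{\zeta(4)}\cdot\tfrac{48}{60}=\tfrac{24\zeta(3)}{\pi^4}x^3$, and the error is controlled by the two elementary bounds $\sum_{d\le x}|g(d)|/d^2\ll\log x$ (Mertens) and $\sum_{d>x}|g(d)|/d^3\ll 1/x$. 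If you want to salvage your approach, start from the correct factorisation $\zeta(s-2)\cdot\zeta(2s-3)/\zeta(2s-2)$ (with the explicit $2$-factor) and convolve with $\id_2$ rather than $\id$; the hyperbola-type manoeuvres you sketch then become unnecessary.
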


\begin{proof} The Dirichlet series of the function $r\mapsto N_3(0,r)$ is
\begin{equation*}
\sum_{r=1}^{\infty} \frac{N_3(0,r)}{r^s} = \zeta(s-2) G(s),
\end{equation*}
where
\begin{equation} \label{G}
G(s)= \frac{\zeta(2s-3)}{\zeta(2s-2)}\frac{2^{2s}-16}{2^{2s}-4}
\end{equation}
is the Dirichlet series of the multiplicative function $g$ given by
\begin{equation*}
g(p^{\nu}) = \begin{cases}
p^{3\beta-1}(p-1), & \text{ if \ $p>2$, $\nu=2\beta \ge 2$;} \\
-2^{3\beta}, & \text{ if \ $p=2$, $\nu=2\beta \ge 2$;} \\
0, & \text{ if \ $p\ge 2$, $\nu=2\beta -1\ge 1$.}
\end{cases}
\end{equation*}

Therefore, $N_3(0,\DOT)=\id_2* g$ and obtain
\begin{align}
\sum_{r\le x} N_3(0,r) & = \sum_{d\le x} g(d)\left(\frac{x^3}{3d^3}+ O(\frac{x^2}{d^2})\right) \nonumber \\
& =\frac{x^3}{3}G(3) + O\left(x^3\sum_{d>x} \frac{|g(d)|}{d^3}\right) +  O\left(x^2\sum_{d\le x} \frac{|g(d)|}{d^2}\right).
\label{sum_N_3}
\end{align}

Here a direct computation shows that
\begin{equation} \label{log}
\sum_{d\le x} \frac{|g(d)|}{d^2} \le \prod_{p\le x} \sum_{\nu=0}^{\infty} \frac{|g(p^\nu)|}{p^{\nu}}  \ll \prod_p \left(1+\frac1{p}\right)
\ll \log x
\end{equation}
by Mertens' theorem.

Furthermore, by \eqref{G}, $g(n)=\sum_{ab^2=n} h(a)b^3$, where the Dirichlet series of the function $h$ is absolutely convergent
for $\Re s>3/2$. Hence
\begin{equation*}
\sum_{n\le x} h(n)= c_2x^2+  O(x^{3/2+\varepsilon})
\end{equation*}
with a certain constant $c_2$, and by partial summation we deduce that

\begin{equation} \label{1_per_x}
\sum_{d>x} \frac{|g(d)|}{d^3} \ll \frac1{x}.
\end{equation}

Now the result follows from \eqref{sum_N_3}, \eqref{log} and \eqref{1_per_x}.
\end{proof}

\subsection{The congruence \texorpdfstring{$x^2+y^2+z^2\equiv 1$}{x2+y2+z2=1} (mod \texorpdfstring{$r$}{r})}

$N_3(1,r)$ is the sequence A087784 in \cite{OEIS}. Using Corollary \ref{cor_5_4}
and Proposition \ref{Prop_7_2} we have

\begin{cor} For every prime power $p^{\nu}$ ($\nu\in \N$),
\begin{equation*}
N_3(1,p^\nu) = \begin{cases}
p^{2\nu} (1+1/p), & \text{ if \ $p \equiv 1$ {\rm (mod $4$)}, $\nu \ge 1$}; \\
p^{2\nu} (1-1/p), & \text{ if \ $p \equiv -1$ {\rm (mod $4$)}, $\nu \ge 1$}; \\
4, & \text{ if \ $p=2$, $\nu=1$}; \\
3\cdot 2^{2\nu-1}, & \text{ if \ $p=2$, $\nu \ge 2$}.
\end{cases}
\end{equation*}
\end{cor}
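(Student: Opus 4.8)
The plan is to specialize the two results already established for the congruence $x^2+y^2+z^2\equiv 1$: Corollary \ref{cor_5_4} disposes of every odd prime power, Proposition \ref{Prop_7_2} covers the powers $2^\nu$ with $\nu\ge 2$, and the modulus $2$ is handled by a one-line direct count.

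First I would let $p>2$ be prime and take $r=p^\nu$. Applying Corollary \ref{cor_5_4} with $k=3$ (so $m=0$), $n=1$ and $\a=(1,1,1)$, and using $\left(\frac{1}{d}\right)=1$, gives
\begin{equation*}
N_3(1,p^\nu)=p^{2\nu}\sum_{d\mid p^\nu}\frac{\mu^2(d)}{d}\,(-1)^{(d-1)/2}.
\end{equation*}
Only the squarefree divisors $d=1$ and $d=p$ contribute, so the inner sum equals $1+(-1)^{(p-1)/2}/p$. Since $(-1)^{(p-1)/2}=1$ for $p\equiv 1$ (mod $4$) and $(-1)^{(p-1)/2}=-1$ for $p\equiv -1$ (mod $4$), this yields $N_3(1,p^\nu)=p^{2\nu}(1+1/p)$ in the first case and $N_3(1,p^\nu)=p^{2\nu}(1-1/p)$ in the second, as claimed.

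Next I would treat $p=2$. For $\nu=1$ the relation $x^2\equiv x$ (mod $2$) turns the congruence $x^2+y^2+z^2\equiv 1$ (mod $2$) into the linear congruence $x+y+z\equiv 1$ (mod $2$), which has exactly $2^2=4$ solutions in $(\Z/2\Z)^3$; hence $N_3(1,2)=4$. For $\nu\ge 2$ I would invoke Proposition \ref{Prop_7_2} with $k=3$ (so $m=0$) and $n=1$ (so $t=0$), obtaining $N_3(1,2^\nu)=2^{2\nu}(1+1/2)=3\cdot 2^{2\nu-1}$. Assembling the three cases gives the stated table.

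There is no real obstacle here: the argument is a direct substitution into Corollary \ref{cor_5_4} and Proposition \ref{Prop_7_2}. The only points demanding a little attention are, first, that the closed form of Proposition \ref{Prop_7_2} represents $N_3(1,2^\nu)$ only for $\nu\ge 2$, the exceptional value $\nu=1$ being forced by the degeneration of the quadratic congruence to a linear one modulo $2$; and second, that the factor $(-1)^{(d-1)/2}$ from Corollary \ref{cor_5_4} must be rewritten as $+1/p$ or $-1/p$ according as $p\equiv 1$ or $p\equiv -1$ (mod $4$).
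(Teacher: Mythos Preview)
Your proof is correct and follows exactly the route the paper indicates: Corollary~\ref{cor_5_4} for odd $p$ and Proposition~\ref{Prop_7_2} for $p=2$. You are in fact more careful than the paper on one point: although Proposition~\ref{Prop_7_2} is stated ``for every $\nu\in\N$'', its formula yields $3\cdot 2^{2\nu-1}=6$ at $\nu=1$ rather than the true value $4$, and your separate direct count at $\nu=1$ resolves this discrepancy.
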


\begin{prop} We have
\begin{equation*}
\sum_{r\le x} N_3(1,r)= \frac{36G}{\pi^4} x^3 +O(x^2\log x).
\end{equation*}
\end{prop}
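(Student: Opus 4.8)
The plan is to follow the template used above for $\sum_{r\le x}N_3(0,r)$; the only new feature is that $N_3(1,p^{\nu})$ depends on the residue of $p$ modulo $4$, so the relevant Dirichlet series will carry an $L$-function factor. First I would compute $\sum_{r\ge 1}N_3(1,r)/r^{s}$ from the local values in the preceding Corollary. For an odd prime $p$ one has $N_3(1,p^{\nu})=p^{2\nu}(1+\chi(p)/p)$ for all $\nu\ge 1$, so summing the geometric series gives the local factor $(1+\chi(p)p^{1-s})/(1-p^{2-s})$; at $p=2$ the local factor works out to $(1+2^{3-2s})/(1-2^{2-s})$. Pulling out $\prod_p(1-p^{2-s})^{-1}=\zeta(s-2)$ and using the identity $\prod_{p>2}(1+\chi(p)p^{1-s})=L(s-1,\chi)\big/\big(\zeta(2s-2)(1-2^{2-2s})\big)$, the $(1-2^{2-s})$ factors cancel and one obtains
\[
\sum_{r=1}^{\infty}\frac{N_3(1,r)}{r^{s}}=\zeta(s-2)\,L(s-1,\chi)\,K(s),\qquad K(s)=\frac{1+2^{3-2s}}{\zeta(2s-2)\,(1-2^{2-2s})}.
\]
The Dirichlet series $\zeta(2s-2)^{-1}$ has coefficients supported on squares and is absolutely convergent for $\Re s>3/2$, and the $2$-factor of $K$ is absolutely convergent for $\Re s>1$; hence $K(s)$ is the Dirichlet series of a multiplicative function $k$ with $\sum_d|k(d)|/d^{\sigma}<\infty$ for every $\sigma>3/2$.

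Since $\zeta(s-2)$ and $L(s-1,\chi)$ are the Dirichlet series of $\id_2$ and of $\chi\cdot\id$ respectively, the factorization above is the convolution identity $N_3(1,\DOT)=\id_2*(\chi\cdot\id)*k$. I would then run the usual convolution argument: set
\[
F(y)=\sum_{m\le y}\bigl(\id_2*(\chi\cdot\id)\bigr)(m)=\sum_{ab\le y}a^{2}\,b\,\chi(b).
\]
Summing first over $a$ via $\sum_{a\le t}a^{2}=t^{3}/3+O(t^{2})$ and then over $b$, and using $\sum_{b\le y}\chi(b)/b^{2}=L(2,\chi)+O(1/y)=G+O(1/y)$ together with $\sum_{b\le y,\ b\ \mathrm{odd}}1/b=O(\log y)$, one gets $F(y)=\tfrac{G}{3}y^{3}+O(y^{2}\log y)$.

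Finally, writing $\sum_{r\le x}N_3(1,r)=\sum_{d\le x}k(d)\,F(x/d)$ and inserting the estimate for $F$,
\[
\sum_{r\le x}N_3(1,r)=\frac{G}{3}\,x^{3}\sum_{d\le x}\frac{k(d)}{d^{3}}+O\!\left(x^{2}\log x\sum_{d\le x}\frac{|k(d)|}{d^{2}}\right).
\]
The sum $\sum_{d\le x}|k(d)|/d^{2}$ converges (as $2>3/2$), so the error term is $O(x^{2}\log x)$; and $\sum_{d>x}|k(d)|/d^{3}\ll x^{-3/2+\varepsilon}$, so replacing $\sum_{d\le x}k(d)/d^{3}$ by $K(3)$ costs only $O(x^{3/2+\varepsilon})=O(x^{2}\log x)$. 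Hence $\sum_{r\le x}N_3(1,r)=\tfrac{G}{3}K(3)x^{3}+O(x^{2}\log x)$, and a direct computation
\[
K(3)=\frac{1+2^{-3}}{\zeta(4)\,(1-2^{-4})}=\frac{9/8}{(\pi^{4}/90)(15/16)}=\frac{108}{\pi^{4}}
\]
gives the leading coefficient $\tfrac{G}{3}\cdot\tfrac{108}{\pi^{4}}=\tfrac{36G}{\pi^{4}}$, as claimed. The only genuinely delicate step is the first one: correctly isolating the character-twisted part of the Euler product as the single factor $L(s-1,\chi)$ and verifying that the leftover factor $K(s)$ remains absolutely convergent past $\Re s=2$ (so that the pole of $\zeta(s-2)$ at $s=3$ alone governs the main term). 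Everything after that is the same convolution-plus-partial-summation bookkeeping already carried out for $N_3(0,r)$.
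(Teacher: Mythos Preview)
Your proof is correct and follows the same convolution-method template as the paper. The paper's argument is terser: it writes the Dirichlet series as $\zeta(s-2)\bigl(1+8\cdot 2^{-2s}\bigr)\prod_{p>2}\bigl(1+\chi(p)p^{1-s}\bigr)$, sets $N_3(1,\DOT)=\id_2*f$ with $f$ supported on odd squarefree integers (where $f(p)=\chi(p)p$) and on $4$ (where $f(4)=8$), and then appeals directly to the convolution method; the $\log x$ in the error comes (implicitly, as in the $N_3(0,r)$ proof) from the Mertens bound $\sum_{d\le x}|f(d)|/d^2\ll\log x$. You instead further factor $f=(\chi\cdot\id)*k$, so that the divergent part is absorbed into the inner sum $F(y)$ and the outer weight $k$ is absolutely summable at $\sigma=2$. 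This is only a reorganization of the same estimates—your $K(s)$ is exactly the paper's auxiliary Euler product divided by $L(s-1,\chi)$—and both routes produce the identical constant $\tfrac{36G}{\pi^4}$ and error $O(x^2\log x)$.
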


\begin{proof} Now the corresponding Dirichlet series is
\begin{equation*}
\sum_{r=1}^{\infty} \frac{N_3(1,r)}{r^s} =
\zeta(s-2)\left(1+\frac{8}{2^{2s}}\right) \prod_{p\equiv 1 \text{
(mod $4$)}} \left(1+\frac1{p^{s-1}} \right) \prod_{p\equiv -1 \text{
(mod $4$)}} \left(1-\frac1{p^{s-1}} \right).
\end{equation*}

Hence, $N_3(1,\DOT)= \id_2 * f$, where $f$ is the multiplicative function defined for prime powers $p^{\nu}$ by
\begin{equation*}
f(p^\nu) = \begin{cases}
p, & \text{ if \ $p \equiv 1$ {\rm (mod $4$)}, $\nu = 1$}; \\
-p, & \text{ if \ $p \equiv -1$ {\rm (mod $4$)}, $\nu = 1$}; \\
8, & \text{ if \ $p=2$, $\nu=2$}; \\
0, &  \text{ otherwise,}
\end{cases}
\end{equation*}
and the given asymptotic formula is obtained by the convolution method.
\end{proof}

\subsection{The congruence \texorpdfstring{$x^2+y^2+z^2+t^2\equiv 0$}{x2+y2+z2+t2=0} (mod \texorpdfstring{$r$}{r})}

This is the case $k=4$, $n=0$ (sequence A240547 in \cite{OEIS}). For
every $r$ odd,
\begin{equation*}
N_4(0,r)= r^3 \sum_{d\mid r} \frac{\varphi(d)}{d^2}
\end{equation*}
by Corollary \ref{cor_4_2} and using also Proposition \ref{prop_6_4}
we conclude

\begin{cor} For every prime power $p^{\nu}$ ($\nu\in \N$),
\begin{equation*}
N_4(0,p^\nu) = \begin{cases}
p^{2\nu-1} (p^{\nu+1}+p^{\nu}-1), & \text{ if \ $p>2$, $\nu \ge 1$}; \\
2^{2\nu+1}, & \text{ if \ $p=2$, $\nu \ge 1$}.
\end{cases}
\end{equation*}
\end{cor}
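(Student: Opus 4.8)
The plan is to handle odd prime powers and powers of $2$ separately; in both cases the statement is an immediate specialization of a formula already proved, followed by a short computation.

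First, for $p>2$ and $\nu\ge 1$, I would start from Corollary~\ref{cor_4_2} with $m=1$ and $n=0$, which gives $N_4(0,r)=r^3\sum_{d\mid r} c_d(0)/d^2$; since $c_d(0)=\varphi(d)$, this is exactly the displayed formula preceding the statement. Setting $r=p^\nu$ yields
\[
N_4(0,p^\nu)=p^{3\nu}\sum_{j=0}^{\nu}\frac{\varphi(p^j)}{p^{2j}}.
\]
Next I would evaluate this finite sum. Using $\varphi(p^0)=1$ and $\varphi(p^j)=p^{j-1}(p-1)$ for $j\ge 1$, the sum equals $1+\frac{p-1}{p}\sum_{j=1}^{\nu}p^{-j}$, and the geometric series $\sum_{j=1}^{\nu}p^{-j}=(1-p^{-\nu})/(p-1)$ gives $\sum_{j=0}^{\nu}\varphi(p^j)/p^{2j}=(p+1-p^{-\nu})/p$. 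Multiplying by $p^{3\nu}$ and rearranging,
\[
N_4(0,p^\nu)=p^{3\nu-1}(p+1)-p^{2\nu-1}=p^{2\nu-1}\bigl(p^{\nu+1}+p^{\nu}-1\bigr),
\]
which is the first case.

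Finally, for $p=2$ I would simply substitute $m=1$ into Proposition~\ref{prop_6_4}: the bracketed factor becomes $1-(2^{\nu-1}-1)/2^{\nu-1}=2^{-(\nu-1)}$, so $N_4(0,2^\nu)=2^{3\nu}\cdot 2^{-(\nu-1)}=2^{2\nu+1}$, the second case.

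There is no real obstacle here; the only points needing a little care are the bookkeeping in the geometric sum — checking in particular that the closed form is also valid for $\nu=1$, where only one nontrivial term appears — and the algebraic simplification of the bracket in Proposition~\ref{prop_6_4}.
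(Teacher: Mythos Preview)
Your proof is correct and follows exactly the route indicated in the paper: it deduces the odd-prime case from the formula $N_4(0,r)=r^3\sum_{d\mid r}\varphi(d)/d^2$ (Corollary~\ref{cor_4_2} with $m=1$, $n=0$) and the case $p=2$ from Proposition~\ref{prop_6_4} with $m=1$. The paper omits the arithmetic you spell out, but your computations are accurate.
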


\begin{prop} We have
\begin{equation*}
\sum_{r\le x} N_4(0,r)= \frac{5\pi^2}{168\zeta(3)} x^4 +O(x^3 \log x).
\end{equation*}
\end{prop}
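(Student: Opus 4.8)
The plan is to follow the same pattern as the proof of the asymptotic formula for $\sum_{r\le x}N_3(0,r)$ given above: determine the Dirichlet series of $r\mapsto N_4(0,r)$, factor out the zeta factor that produces the pole governing the main term, and then estimate the resulting Dirichlet convolution term by term. Starting from the preceding corollary, which gives $N_4(0,p^{\nu})=p^{2\nu-1}(p^{\nu+1}+p^{\nu}-1)$ for odd primes $p$ and $N_4(0,2^{\nu})=2^{2\nu+1}$, I would first verify by a direct Euler product computation that
\begin{equation*}
\sum_{r=1}^{\infty}\frac{N_4(0,r)}{r^s}=\zeta(s-3)\,H(s),\qquad
H(s)=\frac{\zeta(s-2)}{\zeta(s-1)}\cdot\frac{(1-2^{3-s})(1+2^{2-s})}{1-2^{1-s}}.
\end{equation*}
Here $H$ is the Dirichlet series of a multiplicative function $h$ which on odd arguments coincides with $n\mapsto n\varphi(n)$ (so that $h(p^{\nu})=p^{2\nu-1}(p-1)$ for odd $p$), while $h(2)=0$ and $h(2^{\nu})=-2^{2\nu+1}$ for $\nu\ge2$; equivalently $N_4(0,\DOT)=\id_3*h$.

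Given this, summing over $d$ and using $\sum_{m\le y}m^3=\frac{y^4}{4}+O(y^3)$ yields
\begin{align*}
\sum_{r\le x}N_4(0,r)
&=\sum_{d\le x}h(d)\left(\frac{x^4}{4d^4}+O\left(\frac{x^3}{d^3}\right)\right)\\
&=\frac{x^4}{4}\,H(4)+O\left(x^4\sum_{d>x}\frac{|h(d)|}{d^4}\right)+O\left(x^3\sum_{d\le x}\frac{|h(d)|}{d^3}\right).
\end{align*}
Evaluating the Euler factors of $H$ at $s=4$ gives $H(4)=\frac{\zeta(2)}{\zeta(3)}\cdot\frac{(1/2)(5/4)}{7/8}=\frac{5\pi^2}{42\zeta(3)}$, so the main term equals $\frac{5\pi^2}{168\zeta(3)}x^4$, as required.

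The two error sums are then handled exactly as in the $k=3$, $n=0$ case. For odd $p$ a short computation shows $\sum_{\nu\ge1}|h(p^{\nu})|p^{-3\nu}=1/p$, and the $2$-part of $|h|$ contributes only a bounded constant, so
\begin{equation*}
\sum_{d\le x}\frac{|h(d)|}{d^3}\ \ll\ \prod_{p\le x}\left(1+\frac1p\right)\ \ll\ \log x
\end{equation*}
by Mertens' theorem; then, writing $A(t)=\sum_{d\le t}|h(d)|/d^3\ll\log t$ and integrating by parts gives $\sum_{d>x}|h(d)|/d^4\ll(\log x)/x$. Inserting both bounds shows that the two error terms are each $O(x^3\log x)$, which completes the proof. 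I expect the only mildly technical step to be the verification of the Euler product identity for $\sum_{r}N_4(0,r)/r^s$, and in particular the bookkeeping of the $2$-adic factor of $H$ that arises because the corollary treats $p=2$ separately; once that identity is in place the remainder of the argument is entirely routine and parallels the $k=3$, $n=0$ proposition above.
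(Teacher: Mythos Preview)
Your proposal is correct and follows essentially the same route as the paper: the paper's proof consists solely of recording the Dirichlet series
\[
\sum_{r=1}^{\infty}\frac{N_4(0,r)}{r^s}=\zeta(s-2)\zeta(s-3)\left(1-\frac{4}{2^s}-\frac{32}{2^{2s}}\right)\prod_{p>2}\left(1-\frac{1}{p^{s-1}}\right),
\]
which is exactly your $\zeta(s-3)H(s)$ (since $(1-2^{3-s})(1+2^{2-s})=1-4\cdot2^{-s}-32\cdot2^{-2s}$ and $\prod_{p>2}(1-p^{1-s})=1/(\zeta(s-1)(1-2^{1-s}))$), and leaves the convolution estimate implicit. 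Your explicit error analysis via $\sum_{d\le x}|h(d)|/d^3\ll\log x$ and partial summation is a valid way to fill in what the paper omits.
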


\begin{proof} The corresponding Dirichlet series is
\begin{equation*}
\sum_{r=1}^{\infty} \frac{N_4(0,r)}{r^s} = \zeta(s-2)\zeta(s-3)
\left(1-\frac4{2^s}-\frac{32}{2^{2s}} \right) \prod_{p>2}
\left(1-\frac1{p^{s-1}}\right).
\end{equation*}
\end{proof}

\subsection{The congruence \texorpdfstring{$x^2+y^2+z^2+t^2\equiv 1$}{x2+y2+z2+t2=1} (mod \texorpdfstring{$r$}{r})}

This is the case $k=4$, $n=1$. $N_4(1,r)$ is sequence A208895 in \cite{OEIS}. By the identity
\eqref{form_spec_Jordan} giving its values for $r$ odd and by Corollary \ref{cor_6_2} we obtain

\begin{cor} For every prime power $p^{\nu}$ ($\nu\in \N$),
\begin{equation*}
N_4(1,p^\nu) = \begin{cases}
p^{3\nu} (1-1/p^2), & \text{ if \ $p>2$, $\nu \ge 1$}; \\
8^{\nu}, & \text{ if \ $p=2$, $\nu \ge 1$}.
\end{cases}
\end{equation*}
\end{cor}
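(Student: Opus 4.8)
The plan is to split into the two cases $p>2$ and $p=2$ and in each case invoke a formula already established in the paper, so that the proof reduces to a one-line evaluation. For an odd prime $p$ and $\nu\ge 1$ the modulus $r=p^{\nu}$ is odd, hence the identity \eqref{form_spec_Jordan} applies and gives
\[
N_4(1,p^{\nu}) = p^{\nu}\,J_2(p^{\nu}).
\]
It then remains to substitute the value of the Jordan function of order $2$ at a prime power, namely $J_2(p^{\nu}) = p^{2\nu}\bigl(1-1/p^{2}\bigr)$, as recorded in Section \ref{sect_2} ($J_k(n)=n^k\prod_{p\mid n}(1-1/p^k)$). This yields $N_4(1,p^{\nu}) = p^{\nu}\cdot p^{2\nu}(1-1/p^{2}) = p^{3\nu}(1-1/p^{2})$, which is the first branch of the claimed formula.

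For $p=2$ I would observe that $n=1$ is odd, so Corollary \ref{cor_6_2} applies with $m=1$, $k=4m=4$, and it gives directly
\[
N_4(1,2^{\nu}) = 2^{\nu(4m-1)} = 2^{3\nu} = 8^{\nu} \qquad (\nu\in\N),
\]
which is the second branch. Assembling the two cases produces the stated piecewise formula.

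The only points needing attention are the hypotheses of the quoted results: \eqref{form_spec_Jordan} is valid for $r$ odd, which holds since $p>2$; and Corollary \ref{cor_6_2} requires $n$ odd, which holds since $n=1$. There is no genuine obstacle here — the verification is a routine substitution in each case, and the multiplicativity of $r\mapsto N_k(n,r)$ guarantees that these prime-power values determine $N_4(1,r)$ for all $r$, so no further argument is required.
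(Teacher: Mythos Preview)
Your proof is correct and follows exactly the route indicated in the paper: the text immediately preceding this corollary states that it is obtained ``by the identity \eqref{form_spec_Jordan} giving its values for $r$ odd and by Corollary \ref{cor_6_2}'', and you have simply written out the routine substitutions that this entails.
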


\begin{prop} We have
\begin{equation*}
\sum_{r\le x} N_4(1,r)= \frac{2}{7\zeta(3)} x^4 +O(x^3).
\end{equation*}
\end{prop}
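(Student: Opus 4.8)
The plan is to imitate the convolution method used throughout this section. First I would assemble the Dirichlet series of the multiplicative function $r\mapsto N_4(1,r)$ from the local factors given by the preceding Corollary. For an odd prime $p$ the local factor of $\sum_{r} N_4(1,r) r^{-s}$ is
\[
1+(1-p^{-2})\sum_{\nu\ge 1} p^{\nu(3-s)}=\frac{1-p^{1-s}}{1-p^{3-s}},
\]
and the factor at $p=2$ is $(1-2^{3-s})^{-1}$, so that
\[
\sum_{r=1}^{\infty}\frac{N_4(1,r)}{r^s}=\zeta(s-3)\prod_{p>2}\bigl(1-p^{1-s}\bigr)=\frac{\zeta(s-3)}{(1-2^{1-s})\,\zeta(s-1)}.
\]

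Next I would read off the factorization $N_4(1,\DOT)=\id_3 * f$, where $f$ is the multiplicative function with $f(p)=-p$ for every odd prime $p$, $f(2^{\nu})=0$ for $\nu\ge 1$ and $f(p^{\nu})=0$ for $\nu\ge 2$; equivalently $f(n)=n\mu(n)$ for $n$ odd and $f(n)=0$ for $n$ even. The point is the trivial bound $|f(n)|\le n$, which makes the Dirichlet series of $f$ converge absolutely for $\Re s>2$.

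Then the convolution method gives
\[
\sum_{r\le x}N_4(1,r)=\sum_{d\le x}f(d)\sum_{e\le x/d}e^3=\frac{x^4}{4}\sum_{d\le x}\frac{f(d)}{d^4}+O\Bigl(x^3\sum_{d\le x}\frac{|f(d)|}{d^3}\Bigr).
\]
Using $|f(n)|\le n$ one has $\sum_{d\le x}|f(d)|/d^3=O(1)$ and $\sum_{d>x}|f(d)|/d^4=O(x^{-2})$, hence the right-hand side equals $\tfrac{x^4}{4}F(4)+O(x^3)$, where
\[
F(4)=\prod_{p>2}\bigl(1-p^{-3}\bigr)=\frac{1/\zeta(3)}{1-2^{-3}}=\frac{8}{7\zeta(3)},
\]
which yields the stated formula with main term $\tfrac{2}{7\zeta(3)}x^4$.

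There is no serious obstacle here: the only point that needs a moment's care is to check that completing $\sum_{d\le x}f(d)/d^4$ to the full sum $F(4)$ costs only $O(x^2)$ and that the partial sums of $|f(d)|/d^3$ stay bounded, both of which are immediate from $|f(n)|\le n$ together with the convergence of $\sum_d d^{-2}$. Everything else is the routine bookkeeping of the convolution method already applied in the preceding subsections.
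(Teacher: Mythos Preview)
Your argument is correct and follows exactly the approach indicated in the paper: the paper's proof consists solely of recording the Dirichlet series $\sum_{r}N_4(1,r)r^{-s}=\zeta(s-3)\prod_{p>2}(1-p^{1-s})$ and tacitly invoking the convolution method, and you have simply written out those routine details (the factorization $N_4(1,\DOT)=\id_3*f$, the bound $|f(n)|\le n$, and the evaluation $F(4)=8/(7\zeta(3))$).
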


\begin{proof} Here
\begin{equation*}
\sum_{r=1}^{\infty} \frac{N_4(1,r)}{r^s} = \zeta(s-3) \prod_{p>2}
\left(1-\frac1{p^{s-1}}\right).
\end{equation*}
\end{proof}

Finally, we deal with two special cases corresponding to $\a \ne (1,\ldots,1)$.

\subsection{The congruence \texorpdfstring{$x^2-y^2\equiv 1$}{x2-y2=1} (mod \texorpdfstring{$r$}{r})}

Here $k=2$, $n=1$, $\a=(1,-1)$. $N_2(1,r,(1,-1))$ is sequence
A062570 in \cite{OEIS}. Corollary  \ref{cor_4_8} tells us that for
every $r\in \N$ odd, $N_3(1,r,(1,-1))= \varphi(r)$. Furthermore,
from Proposition \ref{Prop_3_1} one can deduce, similar to the proof
of Proposition \ref{Prop_6_1} that for every $\nu \in \N$,
$N_3(1,2^{\nu},(1,-1))= 2^{\nu}$. Thus,

\begin{cor} For every $r\in \N$ one has
\begin{equation*}
N_3(1,r,(1,-1))= \varphi(2r).
\end{equation*}
\end{cor}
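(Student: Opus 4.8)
The plan is to exploit the multiplicativity of the function $r \mapsto N_3(1,r,(1,-1))$ and reduce the evaluation to prime powers, treating the odd part and the $2$-part of $r$ separately. Since $n=1$ and $\a=(1,-1)$ are fixed, the Chinese remainder theorem splits the congruence $x^2-y^2\equiv 1$ modulo coprime factors of $r$ into independent congruences, so this function is multiplicative. Thus it suffices to compute its value at each prime power and reassemble the product.

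The odd prime powers are already in hand: applying Corollary~\ref{cor_4_8} with $m=0$ (so $k=2$), $a_1a_2=-1$ and $n=1$ yields, for every odd $r\in\N$,
\[
N_3(1,r,(1,-1)) = \sum_{d\mid \gcd(1,r)} d\, J_1(r/d) = J_1(r) = \varphi(r).
\]
It remains to handle $r=2^{\nu}$. Here I would start from Proposition~\ref{Prop_3_1} with $d=2^{s}$, writing
\[
N_3(1,2^{\nu},(1,-1)) = 2^{\nu}\sum_{s=0}^{\nu}\frac{1}{2^{2s}}\sum_{\substack{\ell=1\\ \ell\ \text{odd}}}^{2^{s}} e(-\ell/2^{s})\,S(\ell,2^{s})\,S(-\ell,2^{s}),
\]
and simplifying the products $S(\ell,2^{s})S(-\ell,2^{s})$ via the evaluation of $S(\ell,2^{s})$ recalled in the proof of Proposition~\ref{Prop_3_3}. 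For odd $\ell$ one finds $S(\ell,1)S(-\ell,1)=1$, $S(\ell,2)S(-\ell,2)=0$, and $S(\ell,2^{s})S(-\ell,2^{s})=2^{s+1}$ for $s\ge 2$ — the phase $e(\ell/8)$ cancelling $e(-\ell/8)$ in the odd-$s$ case, and $(1+i^{\ell})(1+i^{-\ell})=2$ for $\ell$ odd in the even-$s$ case.

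The cancellation I expect to be the main point of the $2$-power computation is that for $s\ge 2$ the surviving inner sum vanishes:
\[
\sum_{\substack{\ell=1\\ \ell\ \text{odd}}}^{2^{s}} e(-\ell/2^{s}) = e(-1/2^{s})\sum_{j=0}^{2^{s-1}-1} e(-j/2^{s-1}) = 0,
\]
by the geometric-sum identity~\eqref{id_fam}, since $2^{s-1}\ge 2$. Hence only the term $s=0$ contributes ($s=1$ already vanishing), giving $N_3(1,2^{\nu},(1,-1))=2^{\nu}$. This is the value the text obtains from Proposition~\ref{Prop_3_1} by the same argument used to prove Proposition~\ref{Prop_6_1}.

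Finally I would assemble the pieces. Writing $r=2^{\nu}m$ with $m$ odd and $\nu\ge 0$, multiplicativity gives
\[
N_3(1,r,(1,-1)) = N_3(1,2^{\nu},(1,-1))\,N_3(1,m,(1,-1)) = 2^{\nu}\,\varphi(m).
\]
Since $\varphi(2^{\nu+1})=2^{\nu}$ for every $\nu\ge 0$ and $\gcd(2^{\nu+1},m)=1$, we have $\varphi(2r)=\varphi(2^{\nu+1}m)=\varphi(2^{\nu+1})\varphi(m)=2^{\nu}\varphi(m)$, so $N_3(1,r,(1,-1))=\varphi(2r)$, as claimed. Beyond the Gauss-sum cancellation above, no step presents a genuine obstacle; the assembly is routine once the two prime-power cases are established.
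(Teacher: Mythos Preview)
Your proposal is correct and follows essentially the same approach as the paper: for odd $r$ you invoke Corollary~\ref{cor_4_8} with $m=0$ to get $\varphi(r)$, and for $r=2^{\nu}$ you apply Proposition~\ref{Prop_3_1} and argue as in the proof of Proposition~\ref{Prop_6_1} to obtain $2^{\nu}$, then combine by multiplicativity. The paper merely states the $2$-power result without the explicit Gauss-sum cancellation you spell out, so your version is in fact more detailed but not different in substance.
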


\subsection{The congruence \texorpdfstring{$x^2+y^2\equiv z^2$}{x2+y2=z2} (mod \texorpdfstring{$r$}{r})}

This Phythagorean congruence is obtained for $k=3$, $n=0$,
$a_1=a_2=1$, $a_3=-1$.  $N_3(0,r,(1,1,-1))$ is sequence A062775 in
\cite{OEIS}. Proposition \ref{Prop_5_7} shows that for every $r\in
\N$ odd, $N_3(0,r,(1,1,-1))= N_3(0,r)$ given by \eqref{N_3(0,r)}.
From Proposition \ref{Prop_3_1} one can deduce that for every $\nu
\in \N$,
\begin{equation*}
N_3(0,2^{\nu},(1,1,-1))= 2^{2\nu} \left(2- \frac1{2^{\lfloor \nu/2 \rfloor}} \right).
\end{equation*}

Consequently,
\begin{cor}
\begin{equation*}
N_3(0,p^\nu,(1,1,-1)) =
\begin{cases}
p^{3\beta-1}(p^{\beta+1}+p^{\beta}-1), & \text{ if \ $p>2$, $\nu=2\beta$ is even;} \\
p^{3\beta-2}(p^{\beta}+p^{\beta-1}-1), & \text{ if \ $p>2$, $\nu=2\beta-1$ is odd;} \\
2^{3\beta}(2^{\beta+1}-1), & \text{ if \ $p=2$, $\nu=2\beta$ is even;} \\
2^{3\beta-1}(2^{\beta}-1), & \text{ if \ $p=2$, $\nu=2\beta-1$ is odd.}
\end{cases}
\end{equation*}
\end{cor}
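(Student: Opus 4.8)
The plan is to obtain all four cases by substituting $r=p^{\nu}$ into the two closed forms already recorded in this subsection and simplifying; no new idea is needed beyond summing a finite geometric series.

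\emph{Odd prime powers.} By Proposition~\ref{Prop_5_7}, used in the form \eqref{N_3(0,r)}, for $p>2$ we have $N_3(0,p^{\nu},(1,1,-1))=N_3(0,p^{\nu})=p^{2\nu}\sum_{d^2\mid p^{\nu}}\varphi(d)/d^2$. The divisors $d$ with $d^2\mid p^{\nu}$ are exactly $d=p^{j}$ for $0\le j\le \beta:=\lfloor \nu/2\rfloor$, so $\sum_{d^2\mid p^{\nu}}\varphi(d)/d^2=1+\frac{p-1}{p}\sum_{j=1}^{\beta}p^{-j}$, and the geometric sum collapses to $\tfrac{p^{\beta+1}+p^{\beta}-1}{p^{\beta+1}}$. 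Hence $N_3(0,p^{\nu},(1,1,-1))=p^{2\nu}\cdot\tfrac{p^{\beta+1}+p^{\beta}-1}{p^{\beta+1}}$. One then distinguishes the parity of $\nu$: if $\nu=2\beta$ then $\lfloor\nu/2\rfloor=\beta$ and cancelling $p^{\beta+1}$ against $p^{4\beta}$ yields $p^{3\beta-1}(p^{\beta+1}+p^{\beta}-1)$; if $\nu=2\beta-1$ then $\lfloor\nu/2\rfloor=\beta-1$, so the denominator is $p^{\beta}$ and the numerator $p^{\beta}+p^{\beta-1}-1$, and cancelling against $p^{4\beta-2}$ yields $p^{3\beta-2}(p^{\beta}+p^{\beta-1}-1)$. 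This settles the first two cases.

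\emph{The prime $2$.} Here I instead use the formula $N_3(0,2^{\nu},(1,1,-1))=2^{2\nu}\bigl(2-2^{-\lfloor\nu/2\rfloor}\bigr)$ deduced from Proposition~\ref{Prop_3_1} in the preamble to the corollary. For $\nu=2\beta$ this equals $2^{4\beta+1}-2^{3\beta}=2^{3\beta}(2^{\beta+1}-1)$; for $\nu=2\beta-1$, where $\lfloor\nu/2\rfloor=\beta-1$, it equals $2^{4\beta-1}-2^{3\beta-1}=2^{3\beta-1}(2^{\beta}-1)$, giving the remaining two cases.

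The whole argument is a direct computation, and there is no genuine obstacle; the only point requiring care is the bookkeeping of the index shift $\lfloor\nu/2\rfloor=\beta-1$ in the odd-exponent case versus $\lfloor\nu/2\rfloor=\beta$ when $\nu=2\beta$, which is precisely what produces the different leading power of $p$ (respectively of $2$) in the two subcases.
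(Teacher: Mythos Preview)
Your proof is correct and is exactly the computation the paper intends: the corollary is stated without proof, to be read off from the formula \eqref{N_3(0,r)} for odd $p$ and from the displayed formula $N_3(0,2^{\nu},(1,1,-1))=2^{2\nu}(2-2^{-\lfloor\nu/2\rfloor})$ for $p=2$, just as you do. The only cosmetic wrinkle is that you first set $\beta:=\lfloor\nu/2\rfloor$ and then switch to the paper's convention where $\nu=2\beta-1$ forces $\lfloor\nu/2\rfloor=\beta-1$; you handle the shift correctly, but it would read more cleanly to adopt the paper's case split from the start.
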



\section{Acknowledgement} The author gratefully acknowledges support
from the Austrian Science Fund (FWF) under the project Nr.
M1376-N18. The author thanks Olivier Bordell\`{e}s, Steven Finch and Wenguang Zhai for their valuable comments on
the asymptotic formulas included in Section \ref{Sect_spec_cases}. The author is thankful to the referee for some corrections
and remarks that have improved the paper.


\noindent L. T\'oth \\
Department of Mathematics, University of P\'ecs \\ Ifj\'us\'ag \'utja 6,
H-7624 P\'ecs, Hungary \\ and \\
Institute of Mathematics, Universit\"at f\"ur Bodenkultur \\
Gregor Mendel-Stra{\ss}e 33, A-1180 Vienna, Austria  \\
E-mail: ltoth@gamma.ttk.pte.hu

\end{document}